\documentclass[final,leqno,onefignum,onetabnum]{article}

\usepackage{amsmath}
\usepackage{amsthm}
\usepackage{latexsym}
\usepackage{amssymb}
\usepackage{lipsum}
\usepackage{amsfonts}
\usepackage{graphicx}
\usepackage{epstopdf}
\usepackage{algorithmic}
\usepackage{hyperref}
\usepackage{geometry}
\geometry{a4paper, total={170mm,257mm},left=20mm,top=20mm,}


\newcommand{\esp}{\mathbb{E}}
\newcommand{\prob}{\mathbb{P}}

\newcommand{\alg}{\mathcal{F}}
\newcommand{\alge}{\mathcal{G}}

\newcommand{\re}{\mathbb{R}}
\newcommand{\polar}{\mathbb{N}}
\DeclareMathOperator*{\dom}{dom}
\DeclareMathOperator*{\interior}{int}
\newcommand{\constr}{\mathcal{I}}
\DeclareMathOperator*{\dist}{d}
\DeclareMathOperator*{\diam}{\mathcal{D}}
\DeclareMathOperator*{\argmin}{argmin}
\DeclareMathOperator*{\cl}{\mbox{cl}}
\newcommand{\tang}{\mathbb{T}}
\newtheorem{assump}{Assumption}
\newtheorem{definition}{Definition}
\newtheorem{algo}{Algorithm}
\newtheorem{rem}{Remark}
\newtheorem{lemma}{Lemma}
\newtheorem{theorem}{Theorem}
\newtheorem{corollary}{Corollary}
\newtheorem{proposition}{Proposition}

\begin{document}

\title{Incremental constraint projection methods for monotone stochastic variational inequalities} 
		
\author{A. N. Iusem, Instituto Nacional de Matem\'atica Pura e Aplicada (IMPA), \href{}{iusp@impa.br},\and
Alejandro Jofr\'e, Center for Mathematical Modeling (CMM) \& DIM, \href{}{ajofre@dim.uchile.cl},\and
Philip Thompson, Instituto Nacional de Matem\'atica Pura e Aplicada (IMPA), \href{}{philip@impa.br}
}
\date{}
\maketitle

\begin{abstract}
We consider stochastic variational inequalities with  monotone operators defined as the expected value of a random operator. 
We assume the feasible set is the intersection of a large family of convex sets. We propose a method that combines stochastic 
approximation with incremental constraint projections meaning that at each iteration, a step similar to some variant of a deterministic 
projection method is taken after the random operator is sampled and a component of the intersection defining the feasible set is 
chosen at random. Such sequential scheme is well suited for applications involving large data sets, online optimization and distributed 
learning. 
First, we assume that the variational inequality is weak-sharp. We provide asymptotic convergence, 
feasibility rate of $O(1/k)$ in terms of the mean squared distance to the feasible set and solvability rate of $O(1/\sqrt{k})$ 
(up to first order logarithmic terms) in terms of the mean distance to the solution set for a bounded or unbounded feasible set. 
Then,  we assume just monotonicity of the operator and introduce an explicit iterative Tykhonov regularization to the method.
 We consider Cartesian variational inequalities so as to encompass the distributed solution of stochastic Nash games or multi-agent 
optimization problems under a limited coordination. We provide asymptotic convergence, feasibility rate of $O(1/k)$ in terms of the 
mean squared distance to the feasible set and, in the case of a compact set, we provide a near-optimal solvability convergence 
rate of $O\left(\frac{k^\delta\ln k}{\sqrt{k}}\right)$ in terms of the mean dual gap-function of the SVI for arbitrarily small $\delta>0$.
\end{abstract}

\section{Introduction}\label{s1}

The standard (deterministic) variational inequality problem, which we will denote as VI($T,X)$ or simply VI, is defined as follows: 
given a closed and convex set $X\subset\re^n$ and a single-valued operator $T:\mathbb{R}^n\rightarrow\mathbb{R}^n$, 
find $x^*\in X$ such that, for all $x\in X$,
\begin{equation}\label{VI}
\langle T(x^*),x-x^*\rangle\ge0.
\end{equation}
We shall denote by $X^*$ the solution set of VI$(T,X)$. The variational inequality problem includes many interesting special 
classes of variational problems 
with applications in economics, game theory and engineering. The basic prototype is smooth convex optimization, where 
$T$ is the gradient of a smooth function.  
Other classes of problems are posed as variational inequalities which are not equivalent to optimization problems, 
such as \emph{complementarity} problems (with $X=\re^n_+$), \emph{system of equations} (with $X=\re^n$), \emph{saddle-point} 
problems and many different classes of \emph{equilibrium problems}. 

In the stochastic case, we start with a measurable space $(\Xi,\alge)$, a measurable (random) operator 
$F:\Xi\times\re^n\to\re^n$ and a random variable $v:\Omega\rightarrow\Xi$ defined on a probability space 
$(\Omega,\alg,\prob)$ which induces an expectation $\esp$ and a distribution $\prob_v$ of $v$. When no confusion arises, we 
use $v$ to also denote a random sample  $v\in\Xi$. We assume 
that for every $x\in\re^n$, $F(v,x):\Omega\rightarrow\re^n$ is an integrable random vector.
The solution criterion analyzed in this paper consists
of solving VI($T,X$) as defined by \eqref{VI}, where $T:\re^n\to\re^n$ is the expected value of $F(v,\cdot)$, i.e., for any $x\in\re^n$,
\begin{equation}\label{expected}
T(x)=\esp[F(v,x)].
\end{equation} 
Precisely, the definition of \emph{stochastic variational inequality} problem (SVI) is:

\quad

\begin{definition}[SVI]\label{SVI.def}
Assuming that $T:\re^n\rightarrow\re^n$ is given by $T(x)=\esp[F(\xi,x)]$ for all $x\in\re^n$, the SVI problem consists 
of finding $x^*\in X$, such that $\langle T(x^*),x-x^*\rangle\ge0$ for all $x\in X$.
\end{definition}	

\quad

Such formulation of SVI is also called \emph{expected value} formulation. It goes back to G\"urkan et al. \cite{robinson2}, 
as a natural generalization of stochastic optimization (SP) problems. Recently, 
a more general definition of stochastic variational inequality was considered 
in Chen et al. \cite{chen&wets} where the feasible set is also affected by randomness, that is, $X:\Xi\rightrightarrows\re^n$ 
is a random set-valued function.

Methods for the deterministic VI($T,X$) have been extensively studied (see Facchinei and Pang \cite{facchinei}). If $T$ is fully available 
then SVI can be solved by these methods. As in the case of SP, the SVI in Definition \ref{SVI.def} becomes 
very different from the deterministic setting when $T$ is \emph{not available}. This is often the case in practice due to 
expensive computation of the expectation in \eqref{expected}, unavailability of $\prob_v$ or no close form of $F(v,\cdot)$. 
This requires sampling the random variable $v$ and the use of values of $F(\eta,x)$ given a sample $\eta$ of $v$ and a current 
point $x\in\re^n$ (a procedure often called ``stochastic oracle'' call).
In this context, there are two current methodologies for solving the SVI problem: \emph{sample average approximation} (SAA)  
and \emph{stochastic approximation} (SA). In this paper we focus on the SA approach.

The SA methodology for SP or SVI can be seen as a projection-type method where the exact mean operator $T$ is replaced along the iterations by a 
random sample of $F$. This approach induces an stochastic error $F(v,x)-T(x)$ for $x\in X$ along the trajectory of 
the method. When $X=\re^n$, Definition \ref{SVI.def} 
becomes the stochastic equation problem (SE): under \eqref{expected}, almost surely find $x^*\in\re^n$ such that $T(x^*)=0$.	
The SA methodology was first proposed by Robbins and Monro in \cite{rob.monro} for the SE problem in the case in which 
$T$ is the gradient of a strongly convex function under specific conditions. Since this fundamental work, 
SA approaches to SP and, more recently for SVI, have been carried out in
Jiang and Xu \cite{Xu}, Juditsky et al. \cite{nem}, Yousefian et al. \cite{uday0}, Koshal et al. \cite{Uday}, 
Wang and Bertsekas \cite{wang}, Chen et al. \cite{lan}, Yousefian et al. \cite{uday2}, Kannan and Shanbhag \cite{uday4}, 
Yousefian et al. \cite{uday5}. 
See Bach and Moulines \cite{bach} for the stochastic approximation procedure in machine learning and online optimization.

A frequent additional difficulty is the possibly complicated structure 
of the \emph{feasible set} $X$. Often, the feasible set takes the form 
$$
X=\cap_{i\in\constr}X_i,
$$ 
where $\{X_i:i\in\constr\}$ is an arbitrarily family of closed convex sets. There are different motivations for considering 
the design of algorithms which, at every iteration, use only a component $X_i$ rather than the whole feasible set $X$. 
First,  in the case of projection methods, 
when the orthogonal projection onto each $X_i$, namely $\Pi_i:\re^n\to X_i$, is much easier to compute 
than the projection onto $X$, namely $\Pi:\re^n\to X$, a natural idea consists of replacing, at iteration $k$, 
$\Pi$ by one of the $\Pi_i$'s, say $\Pi_{i_k}$, or even by an approximation of $\Pi_i$.  
This occurs, for instance, when $X$ is a polyhedron and the $X_i$'s are halfspaces. 
This procedure is the basis
of the so called sequencial or parallel \emph{row action methods} for solving systems of equations (see Censor \cite{cen}) 
and methods for the \emph{feasibility problem}, useful
in many applications, including image restoration and tomography (see, e.g., Bauschke et al. \cite{luke}, Cegielski and Suchocka \cite{suchocka}). 
Second, in some cases $X$ is not known a priori, but is rather revealed through 
the random realizations of its components $X_i$. 
Such problems arise in fair rate allocation problems in wireless networks 
where the channel state is unknown but the channel states $X_i$ are observed 
in time (see e.g. Nedi\'c \cite{nedich} and Huang et al. \cite{huang}). Third, 
in some cases $X$ is known but the number of constraints  
is prohibitively very large (e.g., in machine learning and signal processing).

\subsection{Projection methods}\label{ss1.1}
In the deterministic setting \eqref{VI}, the classical projection method for VI$(T,X)$,
akin to the projected gradient method for convex optimization, is
\begin{equation}\label{algorithm.bertsekas}
x^{k+1}=\Pi[x^k-\alpha_k T(x^k)],
\end{equation}
where $\Pi$ is the projection operator onto $X$ and $\{\alpha_k\}$ is an 
exogenous sequence of positive stepsizes. Convergence of this method is guaranteed assuming $T$
is strongly monotone, Lipschitz continuous and the stepsizes satisfy 
$\alpha_k\in(0,2\sigma/L^2)$ and $\inf_k\alpha_k>0$, where $\sigma>0$ is the modulus of 
strong monotonicity and $L$ is the Lipschitz constant, see e.g. Facchinei and Pang \cite{facchinei}.

The strong monotonicity assumption is quite demanding, and convergence of \eqref{algorithm.bertsekas} is not guaranteed
when the operator is just monotone. In order to deal with this situation, Korpelevich \cite{korpelevich} proposed 
the extra-gradient algorithm 
\begin{eqnarray}\label{algorithm:extra:gradient}
z^{k}=\Pi[x^k-\alpha_k T(x^k)],&&\nonumber\\
x^{k+1}=\Pi[x^k-\alpha_k T(z^k)],
\end{eqnarray}
in which an additional auxiliary projection step is introduced. Convergence of the method is guaranteed when  
the stepsizes satisfy $\alpha_k\equiv\alpha\in (0,1/L)$. 
In Nemirovski \cite{nem2}, the extra-gradient method was generalized and convergence rates were established assuming compactness of 
the feasible set.

Observe that the projection method \eqref{algorithm.bertsekas} and the 
extra-gradient method \eqref{algorithm:extra:gradient} are \emph{explicit}, i.e., 
the formula for obtaining $x^{k+1}$ is an explicit one, up to the computation of the orthogonal projection $\Pi$. 
An \emph{implicit} approach for the solution of monotone variational inequalities consists of 
a Tykhonov or proximal regularization scheme (see Facchinei and Pang \cite{facchinei}, Chapter 12). 
In these methods, a sequence of regularized variational inequality problems are approximately solved at each iteration.

As commented before, a typical case occurs when the feasible set takes the form  
$
X=\cap_{i=1}^m X_i,
$
where all the $X_i$'s are closed and convex. Row action methods and alternate (or cyclic) 
projection algorithms for convex feasibility problems exploit the computation of projections onto the 
components iteratively (see Bauschke \cite{bauschke}). In such case, the order in which the sets $X_i$ 
are used along the iterations, i.e. the so called \emph{control sequence} $\{\omega_k\}\subset \{1, \dots , m\}$, must be specified. 
Several options have been considered in the literature (such as cyclic control, almost cyclical control, 
most violated constraint control and random control). A negative consequence 
of the use of approximate projections 
is the need to use \emph{small stepsizes}, i.e., satisfying $\sum_k\alpha^2<\infty$ and $\sum_k\alpha_k=\infty$, 
which significantly reduces the efficiency of the method. We thus have a trade-off between easier projection computation 
and slower convergence. Additionally, the use of approximate projections require some condition on the feasible 
set, so that the projections onto the sets $X_i$'s are reasonable approximations of the projection onto $X$. 
For this, some form of error bound, 
linear regularity or Slater-type conditions 
on the sets $X_i$ 
must be assumed 
(e.g., Assumption \ref{assump:constraint:reg} 
in Subsection \ref{ss2.2} and the comments following it). See Bauschke and Borwein \cite{bau.bor}, 
Deutsch and Hundal \cite{deu.hun} and Pang \cite{pang}.
Explicit methods for monotone variational inequalities using approximate projections were studied e.g. in Fukushima \cite{fukushima}
and Censor and Gibali \cite{censor}, imposing rather demanding coercivity assumptions on $T$, 
in Bello Cruz and Iusem \cite{iusbel1} assuming paramonotonicity
of $T$, and then in Bello Cruz and Iusem \cite{iusbel2} assuming just monotonicity of $T$. Another method of this type,  using an Armijo
search as in Iusem and Svaiter \cite{iusem} for determining the stepsizes, and approximate projections with the most violated constraint control,
can be found in Bello Cruz and Iusem \cite{yunier}.  

Related to row-action and alternate projective methods are the so called \emph{incremental} methods, introduced in Kibardin \cite{kibardin} 
(see also Luo and Tseng \cite{luo.tseng}, Bertsekas \cite{bertsekas.2}, Nedi\'c \cite{nedich} and references therein). 
These methods are used for the minimization of a \emph{large} sum of convex functions, e.g.
in machine learning applications. In such a context, instead of using the gradient of the sum, 
the gradient of one of the terms is selected iteratively under different control rules. In Polyak \cite{polyak.2}, 
Polyak \cite{polyak.3} and Nedi\'c \cite{nedich}, \emph{incremental constraint} methods with random control rules 
were proposed for minimizing a convex function over an intersection of a \emph{large} number convex sets.
The feasible set takes the form
\begin{equation}\label{assump:constr:intro}
X=X_0\cap\left(\cap_{i\in\constr}X_i\right),
\end{equation}
where $\{X_0\}\cup\{X_i:i\in\constr\}$ is a collection of closed and convex subsets of $\re^n$. 
The \emph{hard} constraint $X_0$ is assumed to have easy computable projections. The \emph{soft} 
constraints $\{X_i:i\in\constr\}$, for a given $i\in\constr$, has the form:
\begin{equation}\label{assump:constr:intro:repres}
X_i=\{x\in\re^n:g_i(x)\le0\},
\end{equation}
for some convex function $g_i$ with positive part $g_i^+(x):=\max\{g_i(x),0\}$ and easy computable subgradients. 
The method on Nedi\'c \cite{nedich} is given by:
\begin{equation}\label{algo:nedich:eq1}
y^{k}=\Pi_{X_0}\left[x^k-\alpha_k\nabla f(x^k)\right],
\end{equation}
\begin{equation}\label{algo:nedich:eq2}
x^{k+1}=\Pi_{X_0}\left[y^{k}-\beta_k\frac{g^+_{\omega_k}(y^{k})}{\Vert d^{k}\Vert^2}d^{k}\right],
\end{equation}
where $\{\alpha_k,\beta_k\}$ are positive stepsizes, $d^{k}\in\partial g^+_{\omega_k}(y^{k})\setminus\{0\}$ if $g^+_{\omega_k}(y^k)>0$, and 
$d^k=d$ for any $d\in\re^{n}\setminus\{0\}$ if $g^+_{\omega_k}(y^k)=0$. In the 
method \eqref{algo:nedich:eq1}-\eqref{algo:nedich:eq2}, $\{\omega_k\}$ is a random control sequence 
taking values in $\constr$ and satisfying certain conditions and $f:\re^n\rightarrow\re$ is a convex smooth function 
(the non-smooth case is also analyzed). Together with row-action and alternate projection methods, 
incremental constraint projection methods can be viewed as the dual version of (standard) incremental methods. 
More recently, stochastic approximation was incorporated to incremental constraint projections methods 
for stochastic convex minimization problems in Wang and Bertsekas \cite{wang2}.

\subsection{Stochastic approximation methods}\label{ss1.2}

The first SA method for SVI was analyzed in Jiang and Xu \cite{Xu}. Their method is:
\begin{eqnarray}\label{algorithm.xu1}
x^{k+1}=\Pi[x^k-\alpha_k F(v^k,x^k)],
\end{eqnarray} 
where $\Pi$ is the Euclidean projection onto $X$, $\{v^k\}$ is a sample of $v$ and $\{\alpha_k\}$ is 
a sequence of positive steps. The a.s. convergence is proved assuming $L$-Lipschitz continuity of $T$, 
strong monotonicity or strict monotonicity of $T$, stepsizes satisfying
$
\sum_k\alpha_k=\infty,\sum_k\alpha_k^2<\infty
$
(with $0<\alpha_k<2\rho/L^2$ in the case where $T$ is $\rho$-strongly monotone) and an unbiased oracle with 
uniform variance, i.e., there exists $\sigma>0$ such that for all $x\in X$,
\begin{equation}\label{noise.bound2}
\esp\left[\Vert F(v,x)-T(x)\Vert^2\right]\le\sigma^2.
\end{equation}

After the above mentioned work, recent research on SA methods for SVI have been developed 
in Juditsky et al. \cite{nem}, Yousefian et al. \cite{uday5, uday0, uday2}, Koshal et al. \cite{Uday}, 
Chen et al. \cite{lan}, Kannan and Shanbhag \cite{uday4}. Two of the main concerns in these papers were the extension of the SA 
approach to the general monotone case and the derivation of (optimal) convergence rate and complexity results with 
respect to known metrics associated to the VI problem. In order to analyze the monotone case, SA methodologies based on 
the extragradient method of Korpelevich \cite{korpelevich}, the mirror-prox algorithm of Nemirovski \cite{nem2} and iterative 
Tykhonov and proximal regularization procedures (see Kannan and Shandbag \cite{uday1}), were used in these works. 
Other objectives were the use of incremental constraint projections in the case of difficulties accessing the feasible set 
in Wang and Bertsekas \cite{wang}, the convergence analysis in the absence of the Lipschitz constant in 
Yousefian et al. \cite{uday5, uday0, uday2}, and the distributed solution of Cartesian variational 
inequalities in Yousefian et al. \cite{uday0}, Koshal et al. \cite{Uday}.

We finally make some comments on two recent methods upon which we make substantial improvements.

In Wang and Bertsekas \cite{wang}, method \eqref{algorithm.xu1} is improved by incorporating an incremental 
projection scheme, instead of exact ones. They take $X=\cap_{i\in\constr}X_i$, where $\constr$ is a finite index set, 
and use a random control sequence, where both the random map $F$ and the control sequence $\{\omega_k\}$ are jointly sampled, 
giving rise to the following algorithm:
\begin{eqnarray}\label{algorithm.wang}
y^k &=& x^k-\alpha_kF(v^k,x^k)\nonumber\\
x^{k+1} &=& y^k-\beta_k(y^k-\Pi_{\omega_k}(y^k)),
\end{eqnarray}
where $\{\alpha_k,\beta_k\}$ are positive stepsizes and $\{v^k\}$ are samples. When $\beta_k\equiv1$, 
the method is the version of method \eqref{algorithm.xu1} with incremental constraint projections. 
For convergence, the operator is assumed to be \emph{strongly monotone} and Lipschitz-continuous and \emph{knowledge} of 
the strong monotonicity and Lipschitz moduli are required for computing the stepsizes. In this setting, method \eqref{algorithm.wang} 
improves upon method \eqref{algo:nedich:eq1}-\eqref{algo:nedich:eq2} when $X_0=\re^n$, $\constr$ is finite and  the projection onto each 
$X_i$ is easy.
	
Regularized iterative Tychonov and proximal point methods for monotone stochastic variational inequalities were introduced 
in Koshal et al. \cite{Uday}. 
In such methods, instead of solving a sequence of regularized variational inequality problems, the regularization 
parameter is updated in each iteration and a \emph{single projection step} associated with the regularized problem is taken. 
This is desirable since (differently from the deterministic case), termination criteria are generally hard to meet in the 
stochastic setting. The algorithm proposed allows for a Cartesian structure on the variational inequality, so as to encompass 
the \emph{distributed} solution of Cartesian SVIs. Namely, the feasible set $X\subset\re^n$ has the the form
$
X=X^1\times\cdots\times X^m,
$
where each Cartesian component $X^j\subset\re^{n_j}$ is a closed and convex set, $v=(v_1,\ldots,v_m)$ and the random 
operator has components $F=(F_1(v_1,\cdot),\ldots,F_m(v_m,\cdot))$ with $F_j(v_j,\cdot):\Xi\times\re^n\rightarrow\re^{n_j}$ 
for $j=1,\ldots,m$ and $\sum_{j=1}^m n_j=n$. The algorithm  in Koshal et al. \cite{Uday} 
is described as follows. Given the $k$-th iterate $x^k\in X$ with components $x^k_j\in X^j$, for $j=1,\ldots,m$, 
the next iterate is given by the distributed projection computations: for $j=1,\ldots,m$,
\begin{equation}\label{tichonov.exact}
x_j^{k+1}=\Pi_{X^j}[x_j^k-\alpha_{k,j}(F_j(v^k_j,x^k)+\epsilon_{k,j} x_j^k)],
\end{equation}
where $\{\alpha_{k,1},\ldots,\alpha_{k,m}\}$ are the 
stepsize sequences, $\{\epsilon_{k,1},\ldots,\epsilon_{k,m}\}$ are the regularization parameter sequences 
and $\{v_{1}^k,\ldots,v_{m}^k\}$ are the samples. This method is shown to converge 
under monotonicity and Lipschitz-continuity of $T$ and a partial coordination 
between the stepsize and regularization parameter sequences (see Assumption \ref{approx.step.tik}). 
The iterative proximal point follows a similar pattern but differently from the Tykhonov method, 
this method requires strict monotonicity, which in particular implies uniqueness of solutions. It should me mentioned that 
two important classes of problems which can be formulated as stochastic Cartesian variational inequalities are the 
stochastic \emph{Nash equilibria} and the stochastic \emph{multi-user optimization} problem; see Koshal et 
al. \cite{Uday} for a precise definition. In these problems, the $i$-th agent has only access to its constraint set $X^i$ 
and $F_i$ (which depends on other agents decisions) so that a distributed solution of the SVI is required. Moreover, 
it is convenient to allow agents to update \emph{independently} their stepsizes and regularization sequences, subjected 
just to a limited coordination.
	
\subsection{Proposed methods and contributions}\label{ss1.3} 

In many stochastic approximation methods, the stochastic error $\varsigma(x):=F(v,x)-T(x)$ is assumed to be bounded, 
demanding the use of small stepsizes with a slow performance. In this case, the use of easily computable approximate projections, 
instead of exact ones, can significantly improve the performance of the algorithm. Additionally, in many cases the constraint set $X$ 
is known, but it contains a very large number of constraints, or $X$ is not known a priori, but is rather learned along 
time through random samples of its constraints. An important feature of incremental constraint projection methods is that 
they process sample operators and sample constraints \emph{sequentially}. This incremental structure is well suited 
for a variety of applications involving large data sets, online optimization and distributed learning. 
For problems that require online learning, incremental projection methods of the type \eqref{algo:nedich:eq1}-\eqref{algo:nedich:eq2} 
or \eqref{algorithm.wang} are practically the only option to be used without the knowledge of all the constraints. 

In view of these considerations, we wish to devise methods which incorporate \emph{incremental constraint projections} 
with \emph{stochastic approximation} of the operator. There has been only one previous work on incremental projections 
for SVIs, namely  Wang and Bertsekas \cite{wang}. In this work  \emph{strong monotonicity} of the operator and \emph{knowledge} 
of the strong monotonicity and Lipschitz moduli were assumed. These are very demanding assumptions in practice and theory. 
\textsf{Our first objective} is to weaken such property to plain \emph{monotonicity} without requiring knowledge of the Lipschitz constant. 
\textsf{Our second objective} is to use \emph{incremental constraint projections} in \emph{distributed methods} for 
multi-agent optimization and equilibrium problems arising in networks. Such joint analysis seems to be new (to the best of our knowledge, 
all previous works in distributed methods for such problems use \emph{exact projections}). This objective is a non-trivial 
generalization of previous known distributed methods since, besides preserving the parallel computations of projections and 
the use of asynchronous agent's parameters of such methods, we wish to allow each user to project inexactly over its decision set 
in a random fashion and without additional coordination.

Assuming the structures \eqref{assump:constr:intro}-\eqref{assump:constr:intro:repres}, 
in the centralized case ($m=1$), we propose the following incremental constraint projection method:
\begin{eqnarray}
y^{k}=\Pi_{X_0}[x^k-\alpha_{k}\left(F(v^k,x^k)+\epsilon_{k} x^k\right)],\label{algo.tikhonov.approx:intro}\\
x^{k+1}=\Pi_{X_0}\left[y^k-\beta_{k}\frac{g^+_{\omega_{k}}(y^k)}{\Vert d^k\Vert^2}d^k\right],\label{algo.tikhonov.approx:eq2:intro}
\end{eqnarray}
where $\{\alpha_k,\beta_k\}$ are stepsize sequences, $\{\epsilon_k\}$ is the regularization parameter sequence, $\{v^k\}$ 
is the sample sequence, $\{\omega_k\}$ is the random control, and $d^{k}\in\partial g^+_{\omega_{k}}(y^k)\setminus-\{0\}$ 
if $g_{\omega_{k}}(y^k)>0$ and $d^k=d$ for any $d\in\re^{n}\setminus\{0\}$ otherwise. We remark that the projection 
onto $X_0$ in \eqref{algo.tikhonov.approx:intro} is dispensable if $\dom(g_i)=\re^n$ and $\{\partial g^+_i:i\in\constr\}$ 
is uniformly bounded on $\re^n$, a condition satisfied, e.g., if the soft constraints have easy computable projections, 
as commented below (see Remark \ref{rem:no:proj:X0} in Subsection \ref{s:pre:proj}). 
The above incremental algorithm advances in such a way that the ``operator step'' and the ``feasibility step'' 
are updated in separate stages. In the first stage, given the current iterate $x^k$, the method advances in 
the direction of a sample $-F(v^k,x^k)$ of the random operator, producing an auxiliary iterate $y^k$. In
this step, the hard constraint set $X_0$ is considered while the soft constraints $\{X_i:i\in\constr\}$ are 
``ignored''. In the second stage, a soft constraint $X_{\omega_k}$ is randomly chosen with $\omega_k\in\constr$, 
and the method advances in the direction opposite to a subgradient of $g^+_{\omega_k}$ at the point $y^k$, 
producing the next iterate $x^{k+1}$. Thus, the method exploits simultaneously the stochastic 
approximation of the random operator (in the first stage) and a randomization of the incremental 
selection of constraint projections (in the second stage). In Section \ref{s2}, this method is analyzed 
with no regularization, i.e., $\epsilon^k\equiv0$ and the monotone operator satisfies the \emph{weak sharpness} 
property (see Section \ref{s:weak:sharp}) while in Section \ref{s3}, we consider the same method with positive 
regularization parameters requiring just \emph{monotonicity} of the operator. 

We make some remarks to illustrate that the mentioned framework is very general. If, for $i\in\constr$, 
the Euclidean projection onto $X_i$ is easy, then we can always construct a function with ``easy" subgradients. 
Indeed, defining the function
$
g_i(x):=\dist(x,X_i),
$
for $x\in\re^n$, then $g_i$ is convex, nonnegative and finite valued over 
$\re^n$, and for any $x\notin X_i$, 
$$
\frac{x-\Pi_{X_i}(x)}{g_i(x)}=\frac{x-\Pi_{X_i}(x)}{\Vert x-\Pi_{X_i}(x)\Vert}\in\partial g_i(x),
$$ 
provides a subgradient which is easy to evaluate. Moreover, $\sup_{d\in\partial g_i(x)}\Vert d\Vert\le1$ for all $x\in\re^n$. 
In this case, using the above directions as  subgradients 
$d^k$ of $g^+_{\omega_k}$ at $y^k$, method \eqref{algo.tikhonov.approx:intro}-\eqref{algo.tikhonov.approx:eq2:intro} can be rewritten as 
\begin{eqnarray}
y^{k}&=&x^k-\alpha_k\left(F(v^k,x^k)+\epsilon_kx^k\right),\label{equation:algo:easy:proj:1}\\
x^{k+1}&=&\Pi_{X_0}\left[y^{k}-\beta_k\left(y^k-\Pi_{X_{\omega_k}}(y^k)\right)\right].\label{equation:algo:easy:proj:2}
\end{eqnarray}
If, additionally, $X_0=\re^n$ and $\beta_k\equiv1$ then the method takes the more basic form
$$
x^{k+1}=\Pi_{X_{\omega_k}}\left[x^k-\alpha_k F(v^k,x^k)\right].
$$

In Section \ref{s3}, we analyse a distributed variant. In this setting, the feasible set $X\subset\re^n$ has the form
$
X=X^1\times\cdots\times X^m,
$
where each Cartesian component $X^j\subset\re^{n_j}$ is a closed and convex set, 
$F(v,\cdot)=(F_1(v_1,\cdot),\ldots,F_m(v_m,\cdot))$ with $v=(v_1,\ldots,v_m)$, 
$F_j(v_j,\cdot):\Xi\times\re^n\rightarrow\re^{n_j}$ for $j=1,\ldots,m$ and $\sum_{j=1}^m n_j=n$. 
Moreover, we assume each Cartesian component has the constraint form
\begin{equation*}
X^j=X_0^j\cap\left(\cap_{i\in\constr_j}X_i^j\right),
\end{equation*}
where $\{X_0^j\}\cup\{X_i^j:i\in\constr_j\}$ is a collection of closed and convex subsets of $\re^{n_j}$. 
Also, for every $i\in\constr_j$, we assume $X_i^j$ is representable in $\re^{n_j}$ as 
\begin{equation*}
X_i^j=\{x\in\re^{n_j}:g_{i}(j|x)\le0\},
\end{equation*}
for some convex function $g_{i}(j|\cdot):\re^{n_j}\rightarrow\re\cup\{\infty\}$. 
We thus propose the following distributed method: for each $j=1,\ldots,m$, 
\begin{eqnarray}
y_j^{k}&=&\Pi_{X_0^j}\left[x_j^k-\alpha_{k,j}\left(F_j(v^k_j,x^k)+\epsilon_{k,j} x_j^k\right)\right],\label{algo.tikhonov.approx:cart:intro}\\
x^{k+1}_j&=&\Pi_{X_0^j}\left[y^k_j-\beta_{k,j}\frac{g^+_{\omega_{k,j}}(j|y^k_j)}{\Vert d^k_j\Vert^2}d^k_j\right],
\label{algo.tikhonov.approx:cart:eq2:intro}
\end{eqnarray}
where, for every agent $j=1,\ldots,m$, $\{\alpha_{k,j},\beta_{k,j}\}$ are stepsize sequences, $\{\epsilon_{k,j}\}$ is the 
regularization parameter sequence, $\{v^k_j\}$ is the sample sequence, $\{\omega_{k,j}\}$ is the random control 
and $d^{k}_j\in\partial g^+_{\omega_{k,j}}(j|y^k_j)\setminus\{0\}$ if $g_{\omega_{k,j}}(j|y^k_j)>0$, and $d^k_j=d$ 
for any $d\in\re^{n_j}\setminus\{0\}$ otherwise. Method \eqref{algo.tikhonov.approx:intro}-\eqref{algo.tikhonov.approx:eq2:intro} 
is the special case of \eqref{algo.tikhonov.approx:cart:intro}-\eqref{algo.tikhonov.approx:cart:eq2:intro} with $m=1$.

We mention the following contributions of methods \eqref{algo.tikhonov.approx:intro}-\eqref{algo.tikhonov.approx:eq2:intro} 
and \eqref{algo.tikhonov.approx:cart:intro}-\eqref{algo.tikhonov.approx:cart:eq2:intro}: 

\begin{itemize}
\item[(i)]\textbf{Incremental constraint projection methods for plain monotone SVIs:} In Wang and Bertsekas \cite{wang}, 
incremental constraint projection methods for SVIs were proposed assuming strong monotonicity with knowledge of the 
strong monotonicity and Lipschitz moduli. We propose a method with incremental constraint projections for SVIs requiring 
just \emph{monotonicity with no knowledge of the Lipschitz constant}, making our method much more general and applicable. 
Using explicit stepsizes, we establish almost sure asymptotic convergence, feasibility rate of $O(1/k)$ in terms of the mean 
squared distance to the feasible set and, in the case of a compact set, we provide a near optimal solvability convergence 
rate of $O\left(\frac{k^\delta\ln k}{\sqrt{k}}\right)$ in terms of the mean dual gap function of the SVI for arbitrary small $\delta>0$.
 
\item[(ii)]\textbf{Incremental constraint projections in distributed methods:} Distributed methods for SVIs have recently attained 
importance recently in the framework of optimization or equilibrium problems in networks. In this context, one important goal is 
to allow \emph{distributed computation} of projections, allow agents to update their parameters \emph{independently} and drop the 
strong or strict monotonicity property \emph{without indirect regularization} which is hard to cope with in the stochastic setting. 
The work in Koshal et al. \cite{Uday} addresses these issues but using \emph{exact projections}, and to the best of our knowledge, 
all previous works in distributed methods, even for convex optimization, seem to project exactly. Our main contribution in this context 
is to include \emph{incremental projections} in \emph{distributed methods} for SVI (and in particular for stochastic optimization). 
In this context, we allow agents to project randomly in simpler components of its own decision set without information of 
other agents' decision sets. Importantly, we preserve all properties in Koshal et al. \cite{Uday} just mentioned. The use of incremental 
projections allows easier computation of projections or flexibility when the constraints are learned via an online procedure. 
In order to achieve such contribution, we deal with a more refined convergence analysis and a new partial coordination assumption, 
not needed in the case of synchronous stepsizes or exact projections:

\begin{equation}\label{equation:add:coordination}
\sum_{k=0}^\infty\frac{(\alpha_{k,\max}-\alpha_{k,\min})^2}{\alpha_{k,\min}\epsilon_{k,\min}}<\infty,
\end{equation}
where $\alpha_{k,\max}=\max_{i=1,\ldots,m}\alpha_{k,i}$, $\alpha_{k,\min}=\min_{i=1,\ldots,m}\alpha_{k,i}$ 
and $\epsilon_{k,\min}=\min_{i=1,\ldots,m}\epsilon_{k,i}$. Using explicit asyncronous stepsizes and regularization sequences, 
we establish a.s. asymptotic convergence, feasibility rate of $O(1/k)$ in terms of the mean squared distance to the feasible set and, 
in the case of a compact feasible set, we provide a near optimal solvability convergence rate of $O\left(\frac{k^\delta\ln k}{\sqrt{k}}\right)$ 
in terms of the mean dual gap function of the SVI for arbitrary small $\delta>0$. 
The partial coordination \eqref{equation:add:coordination} appears in the rate statements as a decaying error related to the use of 
asynchronous stepsizes and asynchronous inexact random projections. To the best of our knowledge, \emph{even for the case 
of exact projections} no convergence rates have been reported for iterative distributed methods for SVIs.

\item[(iii)]\textbf{Weak sharpness property and incremental projections:} The weak sharpness property for VIs was proposed 
in \cite{marcotte}. It has been used as a sufficient condition for finite convergence of algorithms for optimization and VI problems 
in numerous works, e.g. \cite{marcotte, ferris}. To the best of our knowledge, the use of the weak sharpness property as a 
suitable property for \emph{incremental projection} methods, as analyzed in this work, has not been addressed before,  
even for VIs or optimization problems in the deterministic setting. We use an equivalent form of weak sharpness suitable for 
incremental projections. The proof of such equivalence seems to be new. Using explicit stepsizes without knowledge of the sharp-modulus, 
we prove a.s. asymptotic convergence, feasibility rate of $O(1/k)$ in terms of the mean squared distance to the feasible set and 
solvability rate of $O(1/\sqrt{k})$ (up to first order logarithmic terms) in terms of the mean distance to the solution set, for bounded or 
unbounded feasible sets. We also prove that after a finite number of iterations, any solution of a stochastic optimization problem 
with linear objective and the same feasible set as the SVI is a solution of the original SVI. We note that the weak sharpness property 
differs from strong monotonicity, allowing nonunique solutions. In that respect such analysis complements item (i) above.
\end{itemize}

The paper is organized as follows: Section \ref{s:pre} includes preliminary results  
such as tools from the  projection operator and probability, as well as required preliminaries on the weak sharpness property. 
Section \ref{s2} analyzes the method for weak sharp monotone operators. Subsection \ref{ss2.4} presents the 
correspondent complexity analysis. Section \ref{s3} deals with the regularized 
version for general monotone operators. Subsection \ref{subsection:rate:analyis:tyk} presents the correspondent 
complexity analysis. We list the assumptions in each section, along with the algorithm statements and their convergence analysis. 

\section{Preliminaries}\label{s:pre}

\subsection{Projection operator and notation}\label{s:pre:proj}
For $x,y\in\re^n$, we denote $\langle x,y\rangle$ the standard inner product and $\Vert x\Vert=\sqrt{\langle x,x\rangle}$ 
the correspondent Euclidean norm. We shall denote by $\dist(\cdot,C)$ the distance function to a general set $C$, 
namely, $\dist(x,C)=\inf\{\Vert x-y\Vert:y\in C\}$. For $X$ as in Definition \ref{SVI.def} 
we denote $\dist(x):=\dist(x,X)$. By $\cl C$ and $\mathcal{D}(C)$ we denote the closure and the diameter of the set $C$, respectively. 
For a closed and convex set $C\subset\mathbb{R}^n$, we denote by $\Pi_C$ the orthogonal projection onto $C$. 
For a function $g:\re^n\rightarrow\re^n$ we denote by $g^+$ its positive part, defined by $g^+(x)=\max\{0,g(x)\}$ for $x\in\re^n$. 
If $g$ is convex, we denote by $\partial g$ its subdifferential and $\dom(g)$ its domain. 

The following properties of the projection operator are well known; see e.g. Facchinei 
and Pang \cite{facchinei} and Auslender and Teboulle \cite{auslender:teboulle}.  
\begin{lemma}\label{proj}
Take a closed and convex set $C\subset\mathbb{R}^n$. Then
\begin{itemize}
\item[i)] For all $x\in\mathbb{R}^n, y\in C$,
$\langle x-\Pi_C(x),y-\Pi_C(x)\rangle\le0.$
\item[ii)]For all $x,y\in\mathbb{R}^n$,
$
\Vert \Pi_C(x)-\Pi_C(y)\Vert\le\Vert x-y\Vert.
$
\item[iii)] Let $z\in\re^n$, $x\in C$ with $y:=\Pi_C[x-z]$. Then for all $u\in C$,
$$
2\langle z,y-u\rangle\le\Vert x-u\Vert^2-\Vert y-u\Vert^2-\Vert y-x\Vert^2.
$$
\end{itemize}
\end{lemma}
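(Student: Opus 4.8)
The plan is to establish the three items in sequence, deriving (ii) and (iii) from (i), which is the basic variational characterization of the Euclidean projection onto a closed convex set.

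For item (i), I would fix $x\in\re^n$, set $p:=\Pi_C(x)$, and use that $p$ minimizes $z\mapsto\Vert x-z\Vert^2$ over $C$. Given any $y\in C$ and $t\in[0,1]$, convexity of $C$ gives $p+t(y-p)\in C$, so the scalar function $\phi(t):=\Vert x-p-t(y-p)\Vert^2$ attains its minimum over $[0,1]$ at $t=0$, whence $\phi'(0)\ge0$. Since $\phi'(0)=-2\langle x-p,\,y-p\rangle$, this yields $\langle x-\Pi_C(x),\,y-\Pi_C(x)\rangle\le0$, which is (i).

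For item (ii), I would apply (i) twice: with $p:=\Pi_C(x)$ and $q:=\Pi_C(y)$, item (i) gives $\langle x-p,\,q-p\rangle\le0$ and $\langle y-q,\,p-q\rangle\le0$; adding these two inequalities and rearranging produces $\Vert p-q\Vert^2\le\langle x-y,\,p-q\rangle$, and the Cauchy--Schwarz inequality then gives $\Vert p-q\Vert\le\Vert x-y\Vert$. For item (iii), I would apply (i) to the point $x-z$ in place of $x$, so that $y=\Pi_C[x-z]$ satisfies $\langle (x-z)-y,\,u-y\rangle\le0$ for every $u\in C$. Multiplying by $2$ and rearranging gives $2\langle z,\,y-u\rangle\le2\langle x-y,\,y-u\rangle$, and substituting the polarization identity $2\langle x-y,\,y-u\rangle=\Vert x-u\Vert^2-\Vert y-x\Vert^2-\Vert y-u\Vert^2$ (obtained by expanding $\Vert x-u\Vert^2=\Vert(x-y)+(y-u)\Vert^2$) yields exactly the claimed bound.

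There is no substantial obstacle: this is a standard toolbox lemma, and the only place requiring care is tracking signs in item (iii) when passing from the obtuse-angle inequality of (i) to the three-square form used later in the convergence analysis. Alternatively, one may simply cite Facchinei and Pang \cite{facchinei} or Auslender and Teboulle \cite{auslender:teboulle} for all three parts, as the excerpt already does.
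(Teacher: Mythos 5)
Your proof is correct: item (i) is the standard first-order optimality argument for the projection, and your derivations of (ii) and (iii) from (i) (including the expansion $\Vert x-u\Vert^2=\Vert x-y\Vert^2+2\langle x-y,y-u\rangle+\Vert y-u\Vert^2$ in (iii)) check out with the right signs. The paper itself offers no proof and simply cites Facchinei and Pang \cite{facchinei} and Auslender and Teboulle \cite{auslender:teboulle}, and your argument is exactly the standard one found in those references, so there is nothing further to compare.
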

The following lemma will be used in the analysis of the methods of Sections \ref{s2} and \ref{s3}. 
It is  proved in Nedi\'c \cite{nedich} and Polyak \cite{polyak.2}, but in a slightly different form, 
suitable for convex optimization problems. The changes required for the case of monotone variational inequalities are straightforward.

\begin{lemma}\label{lemma:feas:step}
Consider a closed and convex 
$X_0\subset\re^n$,  
and let $g:\re^n\to\re\cup\{\infty\}$ be a convex function with $\dom(g)\subset X_0$. Suppose that 
there exists $C_g>0$ such that 
$\Vert z\Vert\le C_g$ 
for all $x\in X_0$ and all $z\in\partial g^+(x)$. 
Take $x_1\in X_0$, $u\in\re^n$, $\alpha>0$, $\beta\in(0,2)$ and define $y,x_2\in X_0$ as
\begin{eqnarray*}
y&=&\Pi_{X_0}[x_1-\alpha u],\\
x_2&=&\Pi_{X_0}\left[y-\beta\frac{g^+(y)}{\Vert d\Vert^2}d\right],
\end{eqnarray*}  
where $d\in\re^n-\{0\}$ is such that $d\in\partial g^+(y)-\{0\}$ if $g^+(y)>0$. Then for any $x_0\in X_0$ such that $g^+(x_0)=0$, 
and any $\tau>0$, it holds that
$$
\Vert x_2-x_0\Vert^2\le\Vert x_1-x_0\Vert^2-2\alpha\langle x_1-x_0,u \rangle
+\left[1+\tau\beta(2-\beta)\right]\alpha^2\Vert u\Vert^2-
\frac{\beta(2-\beta)}{C_g^2}\left(1-\frac{1}{\tau}\right)\left(g^+(x_1)\right)^2.
$$  
\end{lemma}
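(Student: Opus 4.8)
The plan is to estimate the two projection steps one at a time, starting from the feasibility step and working back to the operator step, and only at the very end to trade the term $(g^+(y))^2$ that appears naturally for the desired $(g^+(x_1))^2$, which is where the free parameter $\tau$ enters.

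First I would handle the feasibility step $x_2=\Pi_{X_0}\left[y-\beta\frac{g^+(y)}{\Vert d\Vert^2}d\right]$. Since $x_0\in X_0$ we have $\Pi_{X_0}[x_0]=x_0$, so nonexpansiveness (Lemma \ref{proj}(ii)) and expansion of the square give
$$\Vert x_2-x_0\Vert^2\le\left\Vert y-\beta\frac{g^+(y)}{\Vert d\Vert^2}d-x_0\right\Vert^2=\Vert y-x_0\Vert^2-2\beta\frac{g^+(y)}{\Vert d\Vert^2}\langle d,y-x_0\rangle+\beta^2\frac{(g^+(y))^2}{\Vert d\Vert^2}.$$
When $g^+(y)>0$ I would invoke the subgradient inequality for $d\in\partial g^+(y)$ together with $g^+(x_0)=0$ to get $\langle d,y-x_0\rangle\ge g^+(y)-g^+(x_0)=g^+(y)$, so the cross term is at most $-2\beta(g^+(y))^2/\Vert d\Vert^2$; when $g^+(y)=0$ the whole correction vanishes and $x_2=y$. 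Using $\Vert d\Vert\le C_g$ in either case, this collapses to $\Vert x_2-x_0\Vert^2\le\Vert y-x_0\Vert^2-\frac{\beta(2-\beta)}{C_g^2}(g^+(y))^2$.

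Next I would treat the operator step $y=\Pi_{X_0}[x_1-\alpha u]$: nonexpansiveness and $x_0\in X_0$ yield
$$\Vert y-x_0\Vert^2\le\Vert x_1-x_0\Vert^2-2\alpha\langle x_1-x_0,u\rangle+\alpha^2\Vert u\Vert^2,$$
and substituting this into the feasibility bound produces an estimate of the desired shape carrying $\alpha^2\Vert u\Vert^2$ and $-\frac{\beta(2-\beta)}{C_g^2}(g^+(y))^2$. The remaining task is to replace $(g^+(y))^2$ by $(g^+(x_1))^2$. For this I would use $\Vert y-x_1\Vert=\Vert\Pi_{X_0}[x_1-\alpha u]-\Pi_{X_0}[x_1]\Vert\le\alpha\Vert u\Vert$ together with the subgradient inequality at $x_1$ for some $d_1\in\partial g^+(x_1)$ with $\Vert d_1\Vert\le C_g$, which gives $0\le g^+(x_1)\le g^+(y)+C_g\Vert y-x_1\Vert\le g^+(y)+C_g\alpha\Vert u\Vert$. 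A weighted Young inequality $2ab\le\lambda a^2+\lambda^{-1}b^2$ applied with $\lambda=(\tau-1)^{-1}$ when $\tau>1$ (the case $\tau\le1$ being vacuous, since then $1-\tau^{-1}\le0$) then yields $(1-\tau^{-1})(g^+(x_1))^2\le(g^+(y))^2+\tau C_g^2\alpha^2\Vert u\Vert^2$. Multiplying by $\beta(2-\beta)/C_g^2>0$ (here $\beta\in(0,2)$ is used) and inserting this into the estimate above turns $-\frac{\beta(2-\beta)}{C_g^2}(g^+(y))^2$ into $-\frac{\beta(2-\beta)}{C_g^2}(1-\tau^{-1})(g^+(x_1))^2+\tau\beta(2-\beta)\alpha^2\Vert u\Vert^2$, which is exactly the claimed inequality.

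I expect the only step requiring care to be the last one, namely calibrating the Young parameter so that precisely the coefficients $1+\tau\beta(2-\beta)$ and $1-\tau^{-1}$ come out; the choice $\lambda=(\tau-1)^{-1}$ makes the $(g^+(y))^2$-coefficient equal $1$ on the nose and leaves $\tau-1\le\tau$ on the $C_g^2\alpha^2\Vert u\Vert^2$-term. Everything else is routine expansion of squared norms combined with Lemma \ref{proj} and convexity of $g^+$, and follows the pattern of Nedi\'c \cite{nedich} and Polyak \cite{polyak.2}, the sole difference being that $u$ here stands for a sampled operator value rather than a gradient, which does not affect any of the estimates.
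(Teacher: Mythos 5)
Your proof is correct and follows essentially the same route as the argument the paper relies on (the lemma is only cited to Nedi\'c and Polyak, whose proofs proceed exactly this way): bound the feasibility step via the subgradient inequality at $y$ and $\Vert d\Vert\le C_g$, bound the operator step by nonexpansiveness, and convert $(g^+(y))^2$ into $(g^+(x_1))^2$ using the $C_g$-Lipschitz bound $g^+(x_1)\le g^+(y)+C_g\alpha\Vert u\Vert$ and a Young inequality calibrated by $\tau$. Your calibration $\lambda=(\tau-1)^{-1}$ indeed yields the stated coefficients (with the harmless slack $\tau-1\le\tau$), and the observation that the case $\tau\le 1$ follows trivially from the pre-Young estimate is also right.
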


\begin{rem}\label{rem:no:proj:X0}
We remark that if $\dom(g)=\re^n$ and the subgradients of $g^+$ are uniformly bounded over $\re^n$, 
then the result of Lemma \ref{lemma:feas:step} holds with $y\in\re^n$ given as
$
y=x_1-\alpha u,
$
instead of $y=\Pi_{X_0}\left[x_1-\alpha u\right]$.
\end{rem}

The abbreviation ``a.s.'' means ``almost surely'' and the abbreviation ``i.i.d.'' means ``independent and 
identically distributed''. Given sequences $\{x^k\}$ and $\{y^k\}$, the notation $x^k=O(y^k)$ or $x^k\lesssim y^k$
means that there  exists $C>0$, such that $\Vert x^k\Vert\le C\Vert y^k\Vert$ for all $k$. 
The notation $x^k\sim y^k$ means $x^k\lesssim y^k$ and $y^k\lesssim x^k$. Given a $\sigma$-algebra $\alg$ and a random variable $\xi$, 
we denote by $\esp[\xi]$ and $\esp[\xi|\alg]$ 
the expectation and conditional expectation, respectively. Also, we write $\xi\in\alg$ for ``$\xi$ is $\alg$-measurable''. 
$\sigma(\xi_1,\ldots,\xi_n)$ indicates the $\sigma$-algebra generated by the random variables $\xi_1,\ldots,\xi_n$. 
$\mathbb{N}_0$ denotes the set of natural numbers including zero. For $m\in\mathbb{N}$, we use the notation 
$[m]:=\{1,\ldots,m\}$. For $r\in\mathbb{R}$, $\lceil r\rceil$ denotes the smallest integer greater than $r$. 
We denote by $\re^m_{>0}$ the interior of the 
nonnegative orthant $\re^m_+$. 

\subsection{Probabilistic tools}
As in other stochastic approximation methods, a fundamental tool to be used is the following Convergence Theorem of Robbins and Siegmund \cite{rob},
which can be seen as the stochastic version of the properties of quasi-Fej\'er convergent sequences.

\begin{theorem}\label{rob}
Let $\{y_k\},\{u_k\}, \{a_k\}, \{b_k\}$ be sequences of non negative random variables, 
adapted to the filtration $\{\alg_k\}$, such that a.s. $\sum a_k<\infty$, $\sum b_k<\infty$ and for all $k\in\mathbb{N}$,
$
\esp\big[y_{k+1}\big| \alg_k\big]\le(1+a_k)y_k-u_k+b_k.
$
Then a.s. $\{y_k\}$ converges and $\sum u_k<\infty$.
\end{theorem}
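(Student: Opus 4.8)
\emph{Proof plan.} This is the classical Robbins--Siegmund convergence theorem for nonnegative almost-supermartingales, and the plan is to reproduce that argument. Every assertion below is meant to hold a.s.\ on the event $A:=\{\sum_k a_k<\infty\}\cap\{\sum_k b_k<\infty\}$, which is exactly where the conclusion is claimed. The first step is a \emph{reduction to the case $a_k\equiv0$}. Set $\pi_k:=\prod_{i=1}^{k-1}(1+a_i)$ with $\pi_1:=1$; since each $a_i$ is $\alg_i$-measurable, $\pi_k$ is $\alg_{k-1}$-measurable and $\pi_{k+1}=(1+a_k)\pi_k$ is $\alg_k$-measurable. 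Dividing the hypothesis by $\pi_{k+1}$ gives
\[
\esp\!\left[\tfrac{y_{k+1}}{\pi_{k+1}}\,\Big|\,\alg_k\right]\le \tfrac{y_k}{\pi_k}-\tfrac{u_k}{\pi_{k+1}}+\tfrac{b_k}{\pi_{k+1}}.
\]
On $A$ the factors $\pi_k$ lie between $1$ and $\exp(\sum_i a_i)<\infty$, so passing from $(y_k,u_k,b_k)$ to $(y_k/\pi_k,\,u_k/\pi_{k+1},\,b_k/\pi_{k+1})$ affects neither the convergence of $\{y_k\}$, nor the summability of $\{b_k\}$, nor the finiteness of $\sum_k u_k$. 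Hence it suffices to treat $a_k\equiv0$, which I do from now on, renaming the rescaled quantities back to $y_k,u_k,b_k$.

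With $a_k\equiv0$, I would introduce the auxiliary process $M_k:=y_k+\sum_{i=1}^{k-1}u_i-\sum_{i=1}^{k-1}b_i$ and check from the recursion (using that $\sum_{i=1}^k u_i$ and $\sum_{i=1}^k b_i$ are $\alg_k$-measurable) that $\esp[M_{k+1}\mid\alg_k]\le M_k$, so $\{M_k\}$ is a supermartingale. The main obstacle is that $M_k$ is \emph{not} nonnegative because of the term $-\sum b_i$, so the supermartingale convergence theorem does not apply directly; I would get around this by \emph{localization}. For an integer $a\ge1$ put $\tau_a:=\inf\{k\ge1:\sum_{i=1}^{k}b_i>a\}$; the partial sums $\sum_{i=1}^k b_i$ being nondecreasing and adapted, $\{\tau_a\le k\}\in\alg_k$, so $\tau_a$ is a stopping time, and on $\{k\le\tau_a\}$ one has $\sum_{i=1}^{k-1}b_i\le a$, whence $M_{k\wedge\tau_a}\ge-a$ (using $y_k,u_k\ge0$). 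Therefore $M_{k\wedge\tau_a}+a$ is a nonnegative supermartingale and converges a.s.; since $\{\sum_i b_i<\infty\}=\bigcup_{a\ge1}\{\tau_a=\infty\}$ and $M_{k\wedge\tau_a}=M_k$ on $\{\tau_a=\infty\}$, it follows that $\{M_k\}$ converges a.s.\ on $A$.

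Finally I would unwind the definitions. On $A$ the series $\sum_i b_i$ converges, so $R_k:=M_k+\sum_{i=1}^{k-1}b_i=y_k+\sum_{i=1}^{k-1}u_i$ converges to a finite limit and is in particular bounded. Since $y_k\ge0$, this forces $\sum_{i=1}^{k-1}u_i=R_k-y_k\le\sup_k R_k<\infty$, and as $\{\sum_{i=1}^{k-1}u_i\}$ is nondecreasing, $\sum_k u_k<\infty$ a.s. Then $y_k=R_k-\sum_{i=1}^{k-1}u_i$ is a difference of two convergent sequences, so $\{y_k\}$ converges a.s., completing the argument. The only further point requiring care is the integrability implicit in the conditional expectations and in the application of the supermartingale convergence theorem; under the standing convention that the $y_k$ are integrable this is immediate (the stopped process $M_{k\wedge\tau_a}+a$ is then nonnegative with $\esp[M_{k\wedge\tau_a}+a]\le\esp[y_1]+a<\infty$), and otherwise it can be absorbed by the same stopping-time truncation.
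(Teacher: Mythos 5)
Your proof is correct: it is the classical Robbins--Siegmund argument (reduction to $a_k\equiv0$ by dividing through by $\prod(1+a_i)$, the supermartingale $M_k=y_k+\sum u_i-\sum b_i$, localization via the stopping times $\tau_a$ to restore nonnegativity, and then unwinding), and all the steps check out, including the measurability of $\tau_a$ and the bound $M_{k\wedge\tau_a}\ge-a$. The paper itself does not prove this statement; it cites it directly as the convergence theorem of Robbins and Siegmund, so your argument supplies exactly the standard proof of the quoted result and nothing needs to be changed beyond the integrability caveat you already flag.
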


We will also use the following result, whose proof can be found in 
Lemma 10 of Polyak \cite{polyak}.
\begin{theorem}
\label{rob2}
Let $\{y_k\}, \{a_k\}, \{b_k\}$ be sequences of nonnegative random variables, adapted to the filtration $\{\alg_k\}$, 
such that a.s. $a_k\in[0,1]$, $\sum a_k=\infty$, $\sum b_k<\infty$, $\lim_{k\rightarrow\infty}\frac{b_k}{a_k}=0$ and for all $k\in\mathbb{N}$,
$
\esp\big[y_{k+1}\big| \alg_k\big]\le(1-a_k)y_k+b_k.
$
Then a.s. $\{y_k\}$ converges to zero.
\end{theorem}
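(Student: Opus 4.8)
The plan is to reduce the statement to the Robbins--Siegmund theorem (Theorem \ref{rob}) and then to identify the limit. Rewrite the hypothesis as
$$
\esp\big[y_{k+1}\,\big|\,\alg_k\big]\le(1+0)\,y_k-u_k+b_k,\qquad u_k:=a_ky_k,
$$
noting that $\{u_k\}$ is a nonnegative sequence adapted to $\{\alg_k\}$, since $a_k,y_k\in\alg_k$ and $a_k,y_k\ge0$. Thus Theorem \ref{rob} applies with its multiplicative sequence taken identically zero (hence trivially summable) and its additive sequence taken to be $\{b_k\}$, which is a.s.\ summable by hypothesis. Theorem \ref{rob} then delivers two facts simultaneously: first, $\{y_k\}$ converges a.s.\ to some random variable $y_\infty\ge0$; second, $\sum_k a_ky_k<\infty$ a.s.

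It remains to show $y_\infty=0$ a.s. Work on the a.s.\ event on which $\sum_k a_k=\infty$, $y_k\to y_\infty$, and $\sum_k a_ky_k<\infty$ all hold, and suppose for contradiction that $\{y_\infty>0\}$ has positive probability within it. On that set, the convergence $y_k\to y_\infty$ produces a (random) index $k_0$ with $y_k\ge y_\infty/2>0$ for every $k\ge k_0$, whence
$$
\sum_{k\ge k_0}a_ky_k\ \ge\ \frac{y_\infty}{2}\sum_{k\ge k_0}a_k\ =\ +\infty,
$$
since deleting finitely many terms leaves $\sum_k a_k$ divergent. This contradicts $\sum_k a_ky_k<\infty$, so $\prob(y_\infty>0)=0$, i.e.\ $y_k\to0$ a.s.

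A couple of remarks on the hypotheses. The argument uses only $\sum_k a_k=\infty$ together with the summability $\sum_k b_k<\infty$; the condition $b_k/a_k\to0$ is in fact not invoked once $\{b_k\}$ is summable, and it would become essential only if one dropped the summability of $\{b_k\}$ (in which case one must instead compare $b_k$ against the ``effective horizon'' $\sum_{j\le k}a_j$ and exploit $\prod_j(1-a_j)\to0$, a noticeably more delicate estimate). In the route sketched above, the only point demanding care is the measure-theoretic bookkeeping: one must localize the contradiction on the \emph{intersection} of the three relevant a.s.\ events \emph{before} restricting to $\{y_\infty>0\}$. After that, the remaining step is the elementary observation that a convergent series cannot dominate, term by term, a series obtained by scaling a divergent series by a positive constant. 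I expect this bookkeeping, rather than any analytic subtlety, to be the only thing one must watch.
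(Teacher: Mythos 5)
Your proof is correct. Note that the paper does not actually prove this statement: it cites Lemma 10 of Polyak's book, whose standard argument is a direct recursive estimate — for any $\epsilon>0$ one eventually has $b_k\le\epsilon a_k$, hence $y_{k+1}-\epsilon\le(1-a_k)(y_k-\epsilon)$ (in conditional expectation), and iterating with $\prod_j(1-a_j)\to 0$ gives $\limsup_k y_k\le\epsilon$; that route leans on the hypothesis $b_k/a_k\to0$ and does not need $\sum_k b_k<\infty$. Your route instead feeds the recursion into Robbins--Siegmund (Theorem \ref{rob}) with $u_k:=a_ky_k$, obtaining a.s.\ convergence of $\{y_k\}$ together with $\sum_k a_ky_k<\infty$, and then identifies the limit as zero by the elementary divergence argument on the intersection of the relevant a.s.\ events. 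Both arguments are sound under the theorem's stated hypotheses, which assume both summability of $\{b_k\}$ and $b_k/a_k\to0$; your observation that the latter condition is not invoked once $\sum_k b_k<\infty$ is accurate (and it is genuinely not implied by the other hypotheses, so the statement is not redundant in general — each of the two conditions supports one of the two possible proofs). The measure-theoretic localization you flag is handled correctly: the random index $k_0$ is defined pointwise on the intersection event before restricting to $\{y_\infty>0\}$, which is all that is needed.
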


\subsection{Weak sharpness}\label{s:weak:sharp}

We briefly discuss the \emph{weak sharpness} property of variational inequalities. 
For $X\subset\re^n$ and $x\in X$, $\polar_X(x)$ denotes
the normal cone of $X$ at $x$, given by 
\begin{equation*}\label{polar.cone}
\polar_X(x)=\{v\in\re^n:\langle v,y-x\rangle\le0,\forall y\in X\},
\end{equation*}
The tangent cone of $X$ at $x\in X$ is defined as 
\begin{equation}\label{tangent:cone:def}
\tang_{X}(x)=\{d\in\re^n:\exists t_k>0,\exists d^k\in\re^n,\forall k\in\mathbb{N}, x+t_kd^k\in X, d^k\rightarrow d\}.
\end{equation}
For a closed and convex set $X$, the tangent cone at a point $x\in X$ 
has the following alternative representations (see Rockafellar and Wets \cite{terry&wets}, Proposition 6.9 and Corollary 6.30):
\begin{equation}\label{tangent:cone:charac}
\tang_{X}(x)=\cl\{\alpha(y-x):\alpha>0,y\in X\}=[\polar_X(x)]^\circ,
\end{equation}
where for a given set $Y\subset\re^n$, the polar set $Y^\circ$ is defined as 
$
Y^\circ=\{v\in\re^n:\langle v,y\rangle\le0,\forall y\in Y\}.
$

In Burke and Ferris \cite{ferris}, the notion of \emph{weak sharp} minima 
for the problem $\min_{x\in X}f(x)$ with solution set $X^*$ 
was introduced: 
there exists $\rho>0$ such that 
\begin{equation}\label{weak.sharp.minima}
f(x)-f^*\ge\rho\dist(x,X^*),
\end{equation}
for all $x\in X$, where $f^*$ is the minimum value of $f$ at $X$. Relation \eqref{weak.sharp.minima} 
means that $f-f^*$ gives an error bound on the solution set $X^*$. 
In Burke and Ferris \cite{ferris}, it is proved that if $f$ is a closed, proper, and differentiable convex function and if 
the sets $X$ and $X^*$ are nonempty, closed, and convex, then \eqref{weak.sharp.minima} is equivalent to the following geometric condition: 
for all $x^*\in X^*$, 
\begin{equation}\label{weak.sharp.geom.min}
-\nabla f(x^*)\in\interior\Bigg(\bigcap_{x\in X^*}[\tang_{X}(x)\cap\polar_{X^*}(x)]^\circ\Bigg).
\end{equation}

In optimization problems, the objective function can be used for determining regularity of solutions. In variational inequalities  
one can use for that purpose the above geometric definition or exploit the use of gap functions associated to the VI. 
The dual gap function $\mathsf{G}:\re^n\to\re\cup\{\infty\}$ is defined as
\begin{equation}\label{def:gap:function}
\mathsf{G}(x):=\sup_{y\in X}\langle T(y),x-y\rangle.
\end{equation}
In the sequel, we denote by $B(0,1)$ the unit ball in $\re^n$ and by $X^*$ the solution set of VI$(T,X)$. In order to define a 
meaningful notion of weak sharpness for VIs, 
the following statements
were considered 
in Marcotte and Zhu \cite{marcotte}: 
	
\begin{itemize}
\item[(i)] There exists $\rho>0$, such that for all $x^*\in X^*$,
\begin{equation}\label{weak.sharp.geom2}
-T(x^*)+\rho B(0,1)\in\bigcap_{x\in X^*}[\tang_{X}(x)\cap\polar_{X^*}(x)]^\circ.
\end{equation}

\item[(ii)] There exists $\rho>0$, such that for all $x^*\in X^*$,
\begin{equation}\label{weak.sharp.aux}
\langle T(x^*),z\rangle\ge\rho\Vert z\Vert, \forall z\in \tang_{X}(x^*)\cap\polar_{X^*}(x^*).
\end{equation}

\item[(iii)] For all $x^*\in X^*$,
\begin{equation}\label{weak.sharp.geom}
-T(x^*)\in\interior\Bigg(\bigcap_{x\in X^*}[\tang_{X}(x)\cap\polar_{X^*}(x)]^\circ\Bigg).
\end{equation}

\item[(iv)] There exist $\rho>0$ such that for all $x\in X$,
\begin{equation}\label{weak.sharp.gap}
\mathsf{G}(x)\ge\rho\dist(x,X^*).
\end{equation}
\end{itemize}

Statement (iii) is the definition of a weak sharp VI$(T,X)$  
given in Marcotte and Zhu \cite{marcotte}. 
In Theorem 4.1 of 
Marcotte and Zhu \cite{marcotte}, 
it was proved that (i)-(ii) are equivalent, 
and that (i)-(iv) are equivalent 
when $X$ is compact  and
$T$ is paramonotone (also known as monotone$^+$)
i.e., $T$ is motonone and 
$
\langle T(x)-T(y),x-y\rangle=0\Rightarrow T(x)=T(y),
$
for all $x,y\in\re^n$
(see Iusem \cite{iusem1} for other properties of paramonotone operators).
 
Relation \eqref{weak.sharp.gap} means that the gap function  $G$ provides an error bound on the solution set $X^*$. Paramonotonicity 
implies that $T$ is constant on the solution set $X^*$. Important classes of paramonotone operators are, for example, co-coercive, 
symmetric monotone and strictly monotone composite operators (see Facchinei and Pang \cite{facchinei}, Chapter 2).

Recently, 
the following assumption was introduced
in Yousefian et al. \cite{uday2}: 
there  exists $\rho>0$ such that for all $x^*\in X^*$ and  all $x\in X$,
\begin{equation}\label{weakly.sharp1}
\langle T(x^*),x-x^*\rangle\ge\rho\dist(x,X^*).
\end{equation}
Clearly, \eqref{weakly.sharp1} implies \eqref{weak.sharp.gap}. 
We show next that  \eqref{weakly.sharp1} implies \eqref{weak.sharp.aux} and the converse statement holds when $T$ is constant on $X^*$.
Thus, when $T$ is constant on $X^*$, \eqref{weak.sharp.geom2}, \eqref{weak.sharp.aux} and \eqref{weakly.sharp1} are equivalent, 
and when $T$ is paramonotone and $X$ is compact, conditions \eqref{weak.sharp.geom2}-\eqref{weakly.sharp1} are all equivalent. 
Hence, the following proposition, which appears to be new and is proved in the Appendix, gives a precise relation between 
property \eqref{weakly.sharp1} with the previous notions of weak sharpness \eqref{weak.sharp.geom2}-\eqref{weak.sharp.gap} 
presented in Marcotte and Zhu \cite{marcotte}. Property \eqref{weakly.sharp1} is well suited for the incremental 
constraint projection-type methods considered here.

\begin{proposition}\label{weak.sharp.equivalence}
Let $T:\re^n\rightarrow\re^n$ be a continuous monotone operator and $X\subset\re^n$ a closed and convex set. The following holds:
\begin{itemize}
\item[i)] Condition \eqref{weakly.sharp1} implies \eqref{weak.sharp.aux}.
\item[ii)] If $T$ is constant on $X^*$, then \eqref{weak.sharp.aux} implies \eqref{weakly.sharp1}.
\end{itemize}
\end{proposition}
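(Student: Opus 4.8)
The plan is to prove the two implications separately, using throughout the tangent-cone representation \eqref{tangent:cone:charac} and the orthogonal projection onto the solution set $X^*$; recall that, since $T$ is continuous and monotone, $X^*$ is closed and convex, so $\Pi_{X^*}$ is well defined (and if $X^*=\emptyset$ both statements are vacuous).

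For part (i) I would fix $x^*\in X^*$ and a direction $z\in\tang_X(x^*)\cap\polar_{X^*}(x^*)$, assuming $z\neq 0$ since otherwise \eqref{weak.sharp.aux} holds trivially. By \eqref{tangent:cone:charac} there exist $\alpha_j>0$ and $y_j\in X$ with $z_j:=\alpha_j(y_j-x^*)\to z$. Applying \eqref{weakly.sharp1} at $x=y_j$ and scaling by $\alpha_j>0$ gives $\langle T(x^*),z_j\rangle\ge\rho\,\alpha_j\,\dist(y_j,X^*)$. The key maneuver is to bound $\dist(y_j,X^*)$ from below using $z\in\polar_{X^*}(x^*)$: letting $\hat y_j:=\Pi_{X^*}(y_j)\in X^*$, one has $\langle z,\hat y_j-x^*\rangle\le 0$ and $\langle z,y_j-\hat y_j\rangle\le\Vert z\Vert\,\dist(y_j,X^*)$, whence $\alpha_j\,\dist(y_j,X^*)\ge\langle z,z_j\rangle/\Vert z\Vert$. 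Combining the two inequalities yields $\langle T(x^*),z_j\rangle\ge\rho\,\langle z,z_j\rangle/\Vert z\Vert$, and passing to the limit $j\to\infty$ (continuity of the inner product) gives $\langle T(x^*),z\rangle\ge\rho\Vert z\Vert$, i.e.\ \eqref{weak.sharp.aux} with the same constant $\rho$.

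For part (ii), write $\bar T$ for the constant value of $T$ on $X^*$. Fix $x\in X$ and set $\bar x:=\Pi_{X^*}(x)$, so $\dist(x,X^*)=\Vert x-\bar x\Vert$; the case $x=\bar x$ reduces \eqref{weakly.sharp1} to $\langle T(x^*),x-x^*\rangle\ge 0$, which holds because $x^*$ solves the VI and $x\in X$, so assume $x\neq\bar x$ and put $z:=x-\bar x$. Lemma \ref{proj}(i) with $C=X^*$ shows $z\in\polar_{X^*}(\bar x)$, while \eqref{tangent:cone:charac} shows $z\in\tang_X(\bar x)$ (take $\alpha=1$, $y=x$); hence \eqref{weak.sharp.aux} at $\bar x$ gives $\langle T(\bar x),x-\bar x\rangle\ge\rho\Vert z\Vert=\rho\,\dist(x,X^*)$. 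Then, for an arbitrary $x^*\in X^*$, I would split $\langle T(x^*),x-x^*\rangle=\langle T(x^*),x-\bar x\rangle+\langle T(x^*),\bar x-x^*\rangle$; the second term is nonnegative since $x^*$ solves the VI and $\bar x\in X$, and, because $T(x^*)=T(\bar x)=\bar T$, the first term equals $\langle T(\bar x),x-\bar x\rangle\ge\rho\,\dist(x,X^*)$. Adding yields \eqref{weakly.sharp1}, again with the same $\rho$.

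The main obstacle is the lower estimate of $\dist(y_j,X^*)$ in part (i): the naive bound $\dist(y_j,X^*)\le\Vert y_j-x^*\Vert$ points the wrong way, and one must instead project $y_j$ onto $X^*$ and exploit that $z$ lies in the normal cone $\polar_{X^*}(x^*)$. Once that device is in place, both directions are routine, part (ii) being essentially a bookkeeping argument that leans on the constancy of $T$ on $X^*$ together with the first-order optimality of points of $X^*$ for the VI.
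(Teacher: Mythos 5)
Your proof is correct and follows essentially the same route as the paper's. The only cosmetic difference is in part (i): the paper works with the sequential definition \eqref{tangent:cone:def} of the tangent cone and bounds $\dist(\cdot,X^*)$ from below by the distance to the halfspace $\{y:\langle d,y-x^*\rangle\le0\}$ that contains $X^*$, whereas you use the ray-closure representation in \eqref{tangent:cone:charac} and obtain the same lower bound via $\Pi_{X^*}$ and Cauchy--Schwarz; part (ii) is identical.
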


Finally, we will use the following result in Theorem 4.2. of Marcotte and Zhu \cite{marcotte}:
\begin{theorem}\label{marcotte.thm4.2}
If $T$ is continuous and there exists $z\in\re^n$ such that
$-z\in\interior\left(\bigcap_{x\in X^*}[\tang_{X}(x)\cap\polar_{X^*}(x)]^\circ\right)$,
then 
$\argmin_{x\in X}\langle z,x\rangle\subset X^*$.
\end{theorem}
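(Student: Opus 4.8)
The plan is to turn the interior hypothesis on $-z$ into a quantitative directional sharpness inequality, and then to play this inequality against the first-order optimality of a minimizer of the linear functional $\langle z,\cdot\rangle$ by projecting onto the solution set. First I would unpack the assumption $-z\in\interior\left(\bigcap_{x\in X^*}[\tang_{X}(x)\cap\polar_{X^*}(x)]^\circ\right)$: by definition of interior there is $\rho>0$ with $-z+\rho B(0,1)\subset\bigcap_{x^*\in X^*}[\tang_{X}(x^*)\cap\polar_{X^*}(x^*)]^\circ$. Fixing any $x^*\in X^*$ and any nonzero $d\in\tang_{X}(x^*)\cap\polar_{X^*}(x^*)$, the perturbation $u=\rho\, d/\Vert d\Vert$ satisfies $\Vert u\Vert=\rho$, so the polarity relation $\langle -z+u,d\rangle\le0$ rearranges to the sharpness bound
$$\langle z,d\rangle\ge\rho\Vert d\Vert\qquad(\star),$$
valid for every $x^*\in X^*$ and every $d\in\tang_{X}(x^*)\cap\polar_{X^*}(x^*)$.

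Next I would take an arbitrary $\bar x\in\argmin_{x\in X}\langle z,x\rangle$ and set $x^*:=\Pi_{X^*}(\bar x)$, the projection of $\bar x$ onto the (closed, convex) solution set $X^*$, with residual direction $d:=\bar x-x^*$. The key verification is that $d$ lies in the intersection cone at $x^*$. On one hand, since $\bar x\in X$ and $X$ is convex, the characterization \eqref{tangent:cone:charac} gives $d=1\cdot(\bar x-x^*)\in\tang_{X}(x^*)$. On the other hand, Lemma \ref{proj}(i) applied to $C=X^*$ yields $\langle \bar x-x^*,y-x^*\rangle\le0$ for all $y\in X^*$, which is exactly the statement $d\in\polar_{X^*}(x^*)$. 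Hence $d\in\tang_{X}(x^*)\cap\polar_{X^*}(x^*)$, and $(\star)$ delivers $\langle z,\bar x-x^*\rangle\ge\rho\Vert\bar x-x^*\Vert=\rho\,\dist(\bar x,X^*)$.

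To close, I would invoke the optimality of $\bar x$: since $x^*\in X^*\subset X$, we have $\langle z,\bar x\rangle\le\langle z,x^*\rangle$, that is $\langle z,\bar x-x^*\rangle\le0$. Combining this with the lower bound just obtained forces $\rho\,\dist(\bar x,X^*)\le0$, so $\dist(\bar x,X^*)=0$ and, by closedness of $X^*$, $\bar x\in X^*$. As $\bar x$ was an arbitrary minimizer, this proves $\argmin_{x\in X}\langle z,x\rangle\subset X^*$.

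The argument is short, so the only delicate points are (a) extracting $(\star)$ with the correct modulus and sign from the interior condition, and (b) ensuring the residual $d=\bar x-\Pi_{X^*}(\bar x)$ belongs \emph{simultaneously} to $\tang_{X}(x^*)$ and $\polar_{X^*}(x^*)$ — which is precisely why choosing $x^*$ as the metric projection onto $X^*$, rather than an arbitrary solution, is essential. This last step is the main obstacle in that it relies on $X^*$ being closed and convex; under the standing monotonicity assumption this holds by Minty's characterization of the solution set, so the projection $\Pi_{X^*}$ and the inequality in Lemma \ref{proj}(i) are available.
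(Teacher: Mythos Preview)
The paper does not prove this statement; it is quoted as Theorem 4.2 of Marcotte and Zhu \cite{marcotte} and used as a black box in Corollary~\ref{cor:aux:problem}. So there is no ``paper's own proof'' to compare against.

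Your argument is correct and essentially self-contained. The extraction of the sharpness inequality $(\star)$ from the interior hypothesis is exactly the equivalence \eqref{weak.sharp.geom2}$\Leftrightarrow$\eqref{weak.sharp.aux} recalled in Section~\ref{s:weak:sharp} (with $T(x^*)$ replaced by the fixed vector $z$), and the projection trick $x^*=\Pi_{X^*}(\bar x)$ together with the optimality of $\bar x$ closes the argument cleanly. One remark on your final caveat: the theorem as stated in the paper only assumes continuity of $T$, under which $X^*$ is closed but not a priori convex; your reliance on convexity of $X^*$ (for Lemma~\ref{proj}(i) to give the normal-cone inclusion) is therefore formally an extra hypothesis. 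In this paper monotonicity is a standing assumption (Assumption~\ref{monotonicity}), so $X^*$ is indeed closed and convex and your proof goes through, but you are right to flag it.
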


As a consequence of Theorem \ref{marcotte.thm4.2}
under weak sharpness and uniform continuity of $T$, any algorithm which generates a sequence 
$\{x^k\}$ such that $\dist(x^k,X^*)\rightarrow0$ has the property that after a finite number of iterations $M$, 
any solution of the auxiliary program
$
\min_{x\in X}\langle T(x^M),x\rangle,
$ 
with a linear objective, is a solution of the original variational inequality (see Theorem 5.1 in Marcotte and Zhu \cite{marcotte}). 
When $X$ is a polyhedron,  this result can be interpreted as a finite convergence property of algorithms for VI with 
the weak sharpness property, since a linear program is finitely solvable. 
Other algorithmic implications of weak sharpness are developed in Marcotte and Zhu \cite{marcotte}. 

\section{An incremental projection method under weak sharpness}\label{s2}

In the following section we assume that the feasible set has the form
\begin{equation}\label{assump:constraint}
X=X_0\cap\left(\cap_{i\in\constr}X_i\right),
\end{equation} 
where $\{X_0\}\cup\{X_i:i\in\constr\}$ is a collection of closed and convex subsets of $\re^n$. 
We assume that the evaluation of the projection onto $X_0$ is computationally easy and that for all $i\in\constr$, 
$X_i$ is representable as 
\begin{equation}\label{assump:represent}
X_i=\{x\in\re^n:g_i(x)\le0\},
\end{equation}
for some convex function $g_i$ with $\dom (g_i)\subset X_0$. 
Also we assume that, for every $i\in\constr$, subgradients of $g^+_i(x)$ at points $x\in X_0-X_i$ are easily computable 
and that $\{\partial g_i^+:i\in\constr\}$ is uniformly bounded over $X_0$, that is, there exists $C_g>0$ such that 
\begin{equation}\label{assump:subgrad:bound}
\Vert d\Vert\le C_g\,\,\, 
\forall x\in X_0, \forall i\in\constr,\forall d\in\partial g_i^+(x). 
\end{equation}

\subsection{Statement of the algorithm}\label{ss2.1}
Next we formally state the algorithm.

\begin{algo}[Incremental constraint projection method]
\quad
\begin{enumerate}
\item{\bf Initialization:} Choose the initial iterate $x^0\in\mathbb{R}^n$, the stepsizes  
$\{\alpha_k\}$ and $\{\beta_k\}$, the random controls $\{\omega_k\}$ and the operator samples $\{v^k\}$.
\item{\bf Iterative step:} Given $x^k$, define:
\begin{eqnarray}
y^{k}&=&\Pi_{X_0}[x^k-\alpha_kF(v^k,x^k)],\label{algo.extra.approx.eq1}\\
x^{k+1}&=&\Pi_{X_0}\left[y^{k}-\beta_k\frac{g^+_{\omega_k}(y^{k})}{\Vert d^{k}\Vert^2}d^{k}\right],
\label{algo.extra.approx.eq2}
\end{eqnarray}
where $d^{k}\in\partial g^+_{\omega_k}(y^{k})-\{0\}$ if $g^+_{\omega_k}(y^k)>0$; $d^k=d\in\re^{n}-\{0\}$ 
if $g^+_{\omega_k}(y^k)=0$.
\end{enumerate}
\end{algo}

\subsection{Discussion of the assumptions}\label{ss2.2}

In the sequel we consider the natural filtration
$$
\alg_k=\sigma(\omega_0,\ldots,\omega_{k-1},v^0,\ldots,v^{k-1}).
$$ 
Next we present the assumptions necessary for our convergence analysis.

\begin{assump}[Consistency]\label{existence}
The solution set $X^*$ of VI$(T,X)$ is nonempty.
\end{assump}

\begin{assump}[Monotonicity]\label{monotonicity}
The mean operator $T$ in \eqref{expected} satisfies: for all $y,x\in\re^n$,
$$
\langle T(y)-T(x),y-x\rangle\ge 0.
$$
\end{assump}

\begin{assump}[Lipschitz-continuity or boundedness]\label{lipschitz}
We suppose $T:\re^n\rightarrow\re^n$ is continuous and, at least, one of the following assumptions hold:
\begin{itemize}
\item[(i)] There exists measurable $L(v):\Omega\rightarrow\re_+$ with finite second moment, such that a.s. for all $y,x\in \re^n$,
$$
\Vert F(v,y)-F(v,x)\Vert\le L(v)\Vert y-x\Vert.
$$
We denote $L:=\sqrt{\esp[L(v)^2]}$.
\item[(ii)] There exists $C_F>0$ such that
$$
\sup_{x\in X_0}\esp\left[\Vert F(v,x)\Vert^2\right]\le 2C_F^2.
$$
\end{itemize}
\end{assump}

Item (i) implies in particular that $T$ is $L$-Lipschitz continuous. Both items (i) or (ii) are standard in stochastic optimization. Let 
\begin{equation}\label{def:sigma}
\sigma(x)^2:=\esp\big[\Vert F(v,x)-T(x)\Vert^2\big]
\end{equation}
denote the variance of $F(v,x)$ for $x\in\re^n$. Both item (ii) and \eqref{noise.bound2} imply that the variance 
function $\sigma(\cdot)^2$ is bounded above \emph{uniformly} over $X$. Item (i) is a weaker assumption since it only requires the 
map $\sigma(\cdot)^2$ to be \emph{finite} at every point in $X$ (allowing $X$ to be unbounded). Except for Wang and Bertsekas \cite{wang} 
in the strongly monotone case, conditions in item (ii) or in \eqref{noise.bound2} were  requested in all the previous 
literature on SA methods 
for SVI or stochastic optimization. Under Assumption \ref{lipschitz}(i), we do not require \eqref{noise.bound2}.

\begin{assump}[IID sampling]\label{unbiased}
The sequence $\{v^k\}$ is an independent identically distributed sample sequence of $v$.
\end{assump}

The above assumption implies in particular that a.s. for all $x\in\mathbb{R}^n$ and all $k\in\mathbb{N}$,
$
\esp\big[F(v^k,x)\big|\alg_k\big]=T(x).
$
We now state the assumptions concerning the incremental projections. 
\begin{assump}[Constraint sampling and regularity]\label{assump:constraint:reg}
There exists $c>0$ such that a.s. for all $x\in X_0$ and all $k\in\mathbb{N}_0$,
$$
\dist(x,X)^2\le c\cdot\esp\left\{\left[g_{\omega_k}^+(x)\right]^2\Big|\alg_k\right\}.
$$
\end{assump}

Assumption \ref{assump:constraint:reg} is very general and it was assumed in Nedi\'c \cite{nedich}. For completeness we present next a 
lemma showing Assumption \ref{assump:constraint:reg} holds in the relevant case in which the feasible set $X$ satisfies a 
standard metric regularity property, the number $|\mathcal{I}|$ of constraints is finite (and possibly very large) and an i.i.d. 
uniform sampling of the constraints is chosen. 

\begin{lemma}[Sufficient condition for Assumption \ref{assump:constraint:reg}]\label{lemma:sufficience:reg}
Suppose $\{v^k\}$ and $\{\omega_k\}$ are independent sequences, $|\mathcal{I}|<\infty$ and the following hold:
\begin{itemize}
\item[(i)] The sequence $\{\omega_k\}$ is an i.i.d. sample of a random variable $\omega$ 
taking values on $\mathcal{I}$ such that for some $\lambda>0$,
$$
\prob\left(\omega=i\right)\ge\frac{\lambda}{|\constr|},\quad\forall i\in\constr,
$$
\item[(ii)] The set $X$ is \emph{metric regular}: there is $\eta>0$ such that for all $x\in X_0$,
$$
\dist(x,X)^2\le \eta\max_{i\in\constr}[g_i^+(x)]^2.
$$
\end{itemize}
Then Assumption \ref{assump:constraint:reg} holds with $c=\frac{\eta|\constr|}{\lambda}$.
\end{lemma}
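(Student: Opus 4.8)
The plan is to evaluate the conditional expectation appearing on the right-hand side of Assumption \ref{assump:constraint:reg} explicitly --- it will turn out to be a deterministic quantity --- and then to bound it from below by a constant multiple of $\dist(x,X)^2$ using hypotheses (i) and (ii) in turn.

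The first step is to verify that, for each $k$, the random index $\omega_k$ is independent of the $\sigma$-algebra $\alg_k=\sigma(\omega_0,\ldots,\omega_{k-1},v^0,\ldots,v^{k-1})$. This combines the two independence hypotheses of the lemma: since $\{\omega_k\}$ is i.i.d., $\omega_k$ is independent of $\sigma(\omega_0,\ldots,\omega_{k-1})$; and since $\{\omega_k\}$ and $\{v^k\}$ are independent sequences, $\sigma(\omega_0,\ldots,\omega_k)$ is independent of $\sigma(v^0,\ldots,v^{k-1})$. A short check on the generating $\pi$-system consisting of sets $A\cap B$ with $A\in\sigma(\omega_0,\ldots,\omega_{k-1})$ and $B\in\sigma(v^0,\ldots,v^{k-1})$ --- for which $\prob(A\cap B\cap C)=\prob(A)\prob(B)\prob(C)$ whenever $C\in\sigma(\omega_k)$, because $A\cap C\in\sigma(\omega_0,\ldots,\omega_k)$ is independent of $B$ and $A$ is independent of $C$ --- then shows that $\sigma(\omega_k)$ is independent of $\alg_k$.

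Having established this, fix a deterministic point $x\in X_0$ and an index $k\in\mathbb{N}_0$. Because $[g^+_{\omega_k}(x)]^2$ is $\sigma(\omega_k)$-measurable and $\omega_k$ has the same law as $\omega$, independence from $\alg_k$ gives
\[
\esp\left\{\left[g^+_{\omega_k}(x)\right]^2\Big|\alg_k\right\}
=\esp\left[\left[g^+_{\omega_k}(x)\right]^2\right]
=\sum_{i\in\constr}\prob(\omega=i)\left[g_i^+(x)\right]^2 .
\]
In particular this conditional expectation is a deterministic function of $x$, so the desired bound will hold for all $x$ and $k$ with no exceptional null set. Every summand above is nonnegative, so by hypothesis (i),
\[
\sum_{i\in\constr}\prob(\omega=i)\left[g_i^+(x)\right]^2
\ge\frac{\lambda}{|\constr|}\sum_{i\in\constr}\left[g_i^+(x)\right]^2
\ge\frac{\lambda}{|\constr|}\max_{i\in\constr}\left[g_i^+(x)\right]^2 .
\]
Finally, the metric regularity hypothesis (ii) yields $\max_{i\in\constr}[g_i^+(x)]^2\ge\eta^{-1}\dist(x,X)^2$, and chaining the three inequalities gives
\[
\esp\left\{\left[g^+_{\omega_k}(x)\right]^2\Big|\alg_k\right\}\ge\frac{\lambda}{\eta|\constr|}\dist(x,X)^2 ,
\]
which is exactly Assumption \ref{assump:constraint:reg} with $c=\eta|\constr|/\lambda$.

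I do not expect a genuine difficulty here; the one point that requires care is the independence of $\omega_k$ from the \emph{entire} filtration $\alg_k$ (and not merely from the previously drawn controls), which is where the assumption that $\{v^k\}$ and $\{\omega_k\}$ are independent sequences is used. Once that is in place the conditional expectation reduces to an ordinary expectation and the remainder is an elementary chain of estimates.
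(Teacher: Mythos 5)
Your proof is correct and follows essentially the same route as the paper's: establish that $\omega_k$ is independent of $\alg_k$, reduce the conditional expectation to $\sum_{i\in\constr}\prob(\omega=i)[g_i^+(x)]^2$, and chain the lower bounds from hypotheses (i) and (ii). The only difference is that you spell out the $\pi$-system argument for the independence of $\omega_k$ from the full filtration, which the paper states without detail.
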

\begin{proof}
Since $\{v^k\}$ and $\{\omega_k\}$ are independent and the $\{\omega_k\}$'s are i.i.d., 
we have that for all $k\in\mathbb{N}_0$, $\omega_k$ is independent of $\alg_k$. Hence for all $k\in\mathbb{N}_0$ and $x\in X$,
\begin{eqnarray*}
\esp\left\{\left[g_{\omega_k}^+(x)\right]^2\big|\alg_k\right\}&=&\esp\left\{\left[g_{\omega_k}^+(x)\right]^2\right\}\\
&=&\esp\left\{\left[g_{\omega}^+(x)\right)^2\right\}\\
&=&\sum_{i\in\constr}\left[g_{i}^+(x)\right]^2\prob(\omega=i)\\
&\ge &\frac{\lambda}{|\constr|}\sum_{i\in\constr}\left[g_{i}^+(x)\right]^2\\
&\ge &\frac{\lambda}{|\constr|}\max_{i\in\constr}\left[g_{i}^+(x)\right]^2\\
&\ge &\frac{\lambda}{\eta|\constr|}\dist(x,X)^2,
\end{eqnarray*}
using  the fact that $\omega_k$ has the same distribution as $\omega$ in the second equality, 
Lemma \ref{lemma:sufficience:reg}(i) in the first inequality and Lemma \ref{lemma:sufficience:reg}(ii)
in the last inequality. 
\end{proof} 
 
Item (i) above is satisfied when $\omega$ is uniform over $\constr$, i.e., $\prob(\omega=i)=1/|\constr|$ for all $i\in\constr$. 
As an example, item (ii) in Lemma \ref{lemma:sufficience:reg} is satisfied for any compact convex set under a Slater condition, 
as proved by Robinson (see Pang \cite{pang}). 
A particular case of item (ii) occurs when for some $\eta>0$ and all $x\in\re^n$,
\begin{equation}\label{assump:linear:regularity}
\dist(x,X)^2\le\eta\max_{i\in\constr}\dist(x,X_i)^2.
\end{equation}
In this case $g_i:=\dist(\cdot,X_i)$ for $i\in\constr$ and the method \eqref{algo.extra.approx.eq1}-\eqref{algo.extra.approx.eq2} 
may be rewritten as \eqref{equation:algo:easy:proj:1}-\eqref{equation:algo:easy:proj:2} assuming easy projections onto the 
soft constraints. Condition \eqref{assump:linear:regularity} is called \emph{linear regularity}; see Bauschke and Borwein \cite{bau.bor}, 
Deutsch and Hundal \cite{deu.hun}. As proved by Hoffman, \eqref{assump:linear:regularity} is satisfied for any polyhedron (see Pang \cite{pang}).
 
\begin{assump}[Small stepsizes]\label{approx.step}
For all $k\in\mathbb{N}$, 
$\alpha_k>0$, $\beta_k\in(0,2)$, 
and
$$
\sum_{k=0}^\infty\alpha_k=\infty,\quad
\sum_{k=0}^\infty\alpha_k^2<\infty,\quad
\sum_{k=0}^\infty\frac{\alpha_k^2}{\beta_k(2-\beta_k)}<\infty.
$$
\end{assump}

We remark here that the use of small stepsizes is forced by 
two factors: the use of approximate projections instead of exact ones, 
and the stochastic approximation. Indeed, even with \emph{exact projections}, 
the method \eqref{algo.extra.approx.eq1}-\eqref{algo.extra.approx.eq2} still requires small stepsizes
in order to guarantee asymptotic convergence. 

Finally we state the weak-sharpness property assumed only in this section.
\begin{assump}[weak sharpness]\label{weakly.sharp.hyphotesis}
There exists $\rho>0$, such that for all $x^*\in X^*$ and all $x\in X$,
\begin{equation}\label{weakly.sharp.assumption}
\langle T(x^*),x-x^*\rangle\ge\rho\dist(x,X^*),
\end{equation}
\end{assump}

\subsection{Convergence analysis}

We need the following lemma whose proof is immediate.
\begin{lemma}\label{l1}
Suppose that Assumptions \ref{lipschitz}(i)-\ref{unbiased} hold. Define the function $B:\re^n\rightarrow[0,\infty)$ as
\begin{eqnarray}\label{def:B}
B(x):=\sqrt{\esp\big[\Vert F(v,x)\Vert^2\big]},
\end{eqnarray}
for any $x\in\re^n$. Then, almost surely, for all $x,y\in\mathbb{R}^n$, $k\in\mathbb{N}$,
$$
\Vert T(x)\Vert^2\le B(x)^2\le 2L^2\Vert x-y\Vert^2+2B(y)^2.
$$
\end{lemma}
 
We now prove an iterative relation to be used in the convergence analysis. 
We mention that \eqref{lemma:recursion1} is sufficient for the convergence analysis and includes the case of unbounded $X$ and $T$. 
If the operator is bounded or $X_0$ is compact, then \eqref{lemma:recursion2}  allows an improvement  of 
the convergence rate given in Section \ref{ss2.4}. In the following we define for all $x\in\re^n$, $k\in\mathbb{N}$ and $\tau>1$,
\begin{equation}\label{def:Bk:Ak:C}
\mathsf{B}_{k}:=\beta_k(2-\beta_k),\quad
\mathsf{A}_{k,\tau}:=\mathsf{B}_k(\tau-1)/(c C_g^2\tau),\quad
\mathsf{C}(x):=\rho+B(x).
\end{equation}

\begin{lemma}[Recursive relations]\label{lema:recursion}
Suppose that Assumptions \ref{existence}-\ref{weakly.sharp.hyphotesis} hold.

If Assumption \ref{lipschitz}(i) holds, then for all $x^*\in X^*$, $\tau>1$ and $k\in\mathbb{N}$,
\begin{eqnarray}
\esp\big[\Vert x^{k+1}-x^*\Vert^2\big|\alg_k\big]&\le & 
\left[1+2\left(1+\mathsf{B}_{k}\tau\right)L^2\alpha_k^2\right]\Vert x^{k}-x^*\Vert^2-2\rho\alpha_k\dist(x^k,X^*)\nonumber\\
&+&\left[\frac{\mathsf{C}(x^*)^2}{\mathsf{A}_{k,\tau}}+2\left(1+\mathsf{B}_{k}\tau\right)B(x^*)^2\right]\alpha_k^2,\label{lemma:recursion1}
\end{eqnarray}
and 
\begin{eqnarray}
\esp\big[\Vert x^{k+1}-x^*\Vert^2\big|\alg_k\big] &\le & 
\left[1+2\left(1+\mathsf{B}_{k}\tau\right)L^2\alpha_k^2\right]\Vert x^{k}-x^*\Vert^2-\frac{\mathsf{A}_{k,\tau}}{2}\dist(x^k,X)^2\nonumber\\
&+&\left[\frac{2\mathsf{C}(x^*)^2}{\mathsf{A}_{k,\tau}}+2\left(1+\mathsf{B}_{k}\tau\right)B(x^*)^2\right]\alpha_k^2.\label{lemma:recursion1:feas}
\end{eqnarray}

If Assumption \ref{lipschitz}(ii) holds, then for all $x^*\in X^*$, $\tau>1$ and $k\in\mathbb{N}$,
\begin{equation}\label{lemma:recursion2}
\esp\big[\dist(x^{k+1},X^*)^2\big|\alg_k\big]\le 
\dist(x^k,X^*)^2-2\rho\alpha_k\dist(x^k,X^*)
+\left[\frac{\left(\rho+\sqrt{2C_F}\right)^2}{\mathsf{A}_{k,\tau}}+2\left(1+\mathsf{B}_{k}\tau\right)C_F^2\right]\alpha_k^2,
\end{equation}
and 
\begin{equation}\label{lemma:recursion2:feas}
\esp\big[\dist(x^{k+1},X^*)^2\big|\alg_k\big]\le 
\dist(x^k,X^*)^2-\frac{\mathsf{A}_{k,\tau}}{2}\dist(x^k,X)^2
+\left[\frac{2\left(\rho+\sqrt{2C_F}\right)^2}{\mathsf{A}_{k,\tau}}+2\left(1+\mathsf{B}_{k}\tau\right)C_F^2\right]\alpha_k^2.
\end{equation}
\end{lemma}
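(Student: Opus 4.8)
The plan is to derive all four recursions from a single application of Lemma~\ref{lemma:feas:step}, taking $x_1 = x^k$, $u = F(v^k, x^k)$, $\alpha = \alpha_k$, $\beta = \beta_k$, $g = g_{\omega_k}$, $d = d^k$, $y = y^k$, $x_2 = x^{k+1}$, and $x_0 = \Pi_X(x^k)$ (the projection of $x^k$ onto $X$, which satisfies $g^+_{\omega_k}(x_0) = 0$ since $x_0 \in X \subset X_{\omega_k}$ and lies in $X_0$). This gives, for any $\tau > 1$,
\begin{equation*}
\|x^{k+1} - x_0\|^2 \le \|x^k - x_0\|^2 - 2\alpha_k \langle x^k - x_0, F(v^k,x^k)\rangle + (1 + \mathsf{B}_k\tau)\alpha_k^2 \|F(v^k,x^k)\|^2 - \frac{\mathsf{B}_k}{C_g^2}\Bigl(1 - \tfrac{1}{\tau}\Bigr)\bigl(g^+_{\omega_k}(x^k)\bigr)^2.
\end{equation*}
First I would take $\esp[\,\cdot\mid\alg_k]$ throughout. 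For the linear term, write $\langle x^k - x_0, F(v^k,x^k)\rangle = \langle x^k - x_0, F(v^k,x^k) - T(x^k)\rangle + \langle x^k - x_0, T(x^k) - T(x^*)\rangle + \langle x^k - x_0, T(x^*)\rangle$; the first summand vanishes in conditional expectation by Assumption~\ref{unbiased} (unbiasedness), the second is $\ge 0$ by monotonicity (Assumption~\ref{monotonicity}), and the third is handled by weak sharpness (Assumption~\ref{weakly.sharp.hyphotesis}), which gives $\langle T(x^*), x^k - x_0 \rangle = \langle T(x^*), x^k - x^*\rangle - \langle T(x^*), x_0 - x^*\rangle \ge \rho\,\dist(x^k, X^*) - 0$, using \eqref{weakly.sharp.assumption} for the first term and the fact that $x_0 \in X$ is a feasible point with $\langle T(x^*), x_0 - x^*\rangle \ge 0$ by the VI definition (so it only helps). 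Combining, $-2\alpha_k \esp[\langle x^k - x_0, F(v^k,x^k)\rangle \mid \alg_k] \le -2\rho\alpha_k \dist(x^k, X^*)$, and since $\|x^k - x_0\| = \dist(x^k, X)$ and $\|x^{k+1} - x_0\| \ge \dist(x^{k+1}, X^*)$ (as $x_0 \in X^*$? — no: $x_0 \in X$ only), one must instead bound $\|x^k - x_0\|^2 = \dist(x^k,X)^2 \le \|x^k - x^*\|^2$ only after noting the LHS $\|x^{k+1}-x_0\|^2 \ge \dist(x^{k+1},X)^2$; to get the $\|x^{k+1}-x^*\|^2$ statement \eqref{lemma:recursion1} I would instead choose $x_0 = x^*$ itself when $x^* \in X$, for which $g^+_{\omega_k}(x^*)=0$, giving directly $\|x^k - x_0\|^2 = \|x^k - x^*\|^2$ and dropping the last (nonnegative) term.

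Next I would bound the stochastic term $\esp[\|F(v^k,x^k)\|^2 \mid \alg_k] = B(x^k)^2$. Under Assumption~\ref{lipschitz}(i), Lemma~\ref{l1} gives $B(x^k)^2 \le 2L^2\|x^k - x^*\|^2 + 2B(x^*)^2$; absorbing the $2L^2\|x^k-x^*\|^2$ piece into the coefficient of $\|x^k - x^*\|^2$ produces the factor $1 + 2(1+\mathsf{B}_k\tau)L^2\alpha_k^2$ and leaves $2(1+\mathsf{B}_k\tau)B(x^*)^2\alpha_k^2$ in the additive term. Under Assumption~\ref{lipschitz}(ii), instead $B(x^k)^2 \le 2C_F^2$ directly, which gives the cleaner recursions \eqref{lemma:recursion2}, \eqref{lemma:recursion2:feas} with no growth factor; here one keeps $x_0 = \Pi_{X^*}(x^k)$ (so $\|x^k - x_0\|^2 = \dist(x^k,X^*)^2$ and $\|x^{k+1}-x_0\|^2 \ge \dist(x^{k+1},X^*)^2$, using $x_0 \in X^* \subset X \subset X_{\omega_k}$ so $g^+_{\omega_k}(x_0)=0$), and uses that the cross term $\langle x^k - x_0, F(v^k,x^k)\rangle$ decomposes as before with $\langle T(x^*), x^k - x_0\rangle \ge \rho\dist(x^k,X^*)$ — but now $x^* = x_0$ varies, so one needs weak sharpness in the uniform form \eqref{weakly.sharp.assumption} applied with this $x^*$.

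For the feasibility recursions \eqref{lemma:recursion1:feas} and \eqref{lemma:recursion2:feas}, I would revisit the last term $-\frac{\mathsf{B}_k}{C_g^2}(1-\tfrac1\tau)(g^+_{\omega_k}(x^k))^2$: taking conditional expectation and invoking Assumption~\ref{assump:constraint:reg}, $\esp[(g^+_{\omega_k}(x^k))^2 \mid \alg_k] \ge \dist(x^k,X)^2/c$, so this term contributes at most $-\frac{\mathsf{B}_k(\tau-1)}{cC_g^2\tau}\dist(x^k,X)^2 = -\mathsf{A}_{k,\tau}\dist(x^k,X)^2$ — but here I would use $x_0$ a feasible point (either $x^*$ or $\Pi_X(x^k)$), not drop this term. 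I must keep \emph{both} the $-2\rho\alpha_k\dist(x^k,X^*)$ term and the $-\mathsf{A}_{k,\tau}\dist(x^k,X)^2$ term simultaneously, then trade half of the feasibility term against the now-present additive constant: the point is that the additive term contains $\mathsf{C}(x^*)^2/\mathsf{A}_{k,\tau}$ coming from bounding $2\alpha_k\langle T(x^*), x^k-x_0\rangle$ crudely below (i.e. $\langle T(x^*), x^k - x_0\rangle \ge -\|T(x^*)\|\dist(x^k,X) \ge -(1/4)\mathsf{A}_{k,\tau}\dist(x^k,X)^2/\alpha_k - (\|T(x^*)\|^2/\mathsf{A}_{k,\tau})\alpha_k$ by Young's inequality), which is what converts $2\rho$ into $\rho$ and creates the $\mathsf{C}(x^*)^2 = (\rho + B(x^*))^2$ or $(\rho + \sqrt{2C_F})^2$ numerator, using $\|T(x^*)\| \le B(x^*) \le \sqrt{2}C_F$. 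The main obstacle will be tracking these two competing decrease terms and the Young-inequality splitting carefully enough to land exactly on the stated constants $\mathsf{A}_{k,\tau}$, $\mathsf{B}_k$, $\mathsf{C}(x^*)$; everything else is a routine assembly of Lemma~\ref{lemma:feas:step}, Lemma~\ref{l1}, monotonicity, unbiasedness, and the constraint-regularity bound.
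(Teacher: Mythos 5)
Your overall skeleton matches the paper's: apply Lemma \ref{lemma:feas:step} with $x_0:=x^*$, take $\esp[\cdot\,|\,\alg_k]$, use unbiasedness and monotonicity, bound $\esp[\Vert F(v^k,x^k)\Vert^2|\alg_k]$ via Lemma \ref{l1} (or by $2C_F^2$), convert the $g^+$ term into $-\mathsf{A}_{k,\tau}\dist(x^k,X)^2$ via Assumption \ref{assump:constraint:reg}, and finish with a Young-inequality trade-off (full or half) against that feasibility term. But there is a genuine gap at the central step: you invoke the weak sharpness inequality \eqref{weakly.sharp.assumption} in the form $\langle T(x^*),x^k-x^*\rangle\ge\rho\dist(x^k,X^*)$. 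That inequality is only assumed for $x\in X$, and the iterate $x^k$ is in general \emph{not} feasible --- it lies in $X_0$ only, since the soft constraints are handled approximately. The paper's proof exists precisely to work around this: it writes $\langle T(x^*),x^*-x^k\rangle$ as a sum of the monotonicity term, $\langle T(x^*),x^*-\Pi_X(x^k)\rangle$ (to which weak sharpness legitimately applies, since $\Pi_X(x^k)\in X$), and $\langle T(x^*),\Pi_X(x^k)-x^k\rangle$. The second piece is then converted back using the $1$-Lipschitz property of $\dist(\cdot,X^*)$, namely $\dist(\Pi_X(x^k),X^*)\ge\dist(x^k,X^*)-\dist(x^k,X)$, which produces a $+\rho\dist(x^k,X)$ error; the third piece contributes $B(x^*)\dist(x^k,X)$ by Cauchy--Schwarz. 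These two errors are exactly what make $\mathsf{C}(x^*)=\rho+B(x^*)$, and they are absorbed by the $-\mathsf{A}_{k,\tau}\dist(x^k,X)^2$ decrease via Young's inequality.

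Your last paragraph gestures at part of this mechanism (you correctly see that $\mathsf{C}(x^*)^2/\mathsf{A}_{k,\tau}$ must come from trading a $\dist(x^k,X)$-linear error against the quadratic feasibility decrease), but you misattribute the origin of the $\rho$ inside $\mathsf{C}(x^*)$: it does not come from ``converting $2\rho$ into $\rho$'' --- the stated recursions retain the full $-2\rho\alpha_k\dist(x^k,X^*)$ --- but from the displacement $\dist(\Pi_X(x^k),X^*)$ versus $\dist(x^k,X^*)$. Without inserting $\Pi_X(x^k)$ your derivation of the solvability decrease term is not valid, and the constant $\rho+B(x^*)$ (respectively $\rho+\sqrt{2C_F}$) cannot be recovered. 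The remainder of your argument (the Lipschitz versus bounded dichotomy, the use of $\Pi_{X^*}(x^k)$ to pass to squared distances in the bounded case, and the two choices of Young parameter giving \eqref{lemma:recursion1} versus \eqref{lemma:recursion1:feas}) is consistent with the paper once this step is repaired.
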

\begin{proof}
Take $x^*\in X^*$, $\tau>1$ and $k\in\mathbb{N}$. We claim that 
$$	
\Vert x^{k+1}-x^*\Vert^2\le\Vert x^k-x^*\Vert^2-2\alpha_k\langle x^k-x^*,F(v^k,x^k)\rangle+
$$
\begin{equation}\label{lemma:recursion:eq1}
\left[1+\tau\beta_k(2-\beta_k)\right]\alpha_k^2\Vert F(v^k,x^k)\Vert^2-
\frac{\beta_k(2-\beta_k)}{C_g^2}\left(1-\frac{1}{\tau}\right)\left(g^+_{\omega_k}(x^k)\right)^2.
\end{equation}	
Indeed, by the  definition of the method \eqref{algo.extra.approx.eq1}-\eqref{algo.extra.approx.eq2}, 
we can invoke Lemma \ref{lemma:feas:step} with $g:=g_{\omega_k}$, 
$x_1:=x^k$, $x_2:=x^{k+1}$, $y:=y^k$, $x_0:=x^*$, $\alpha:=\alpha_k$, $u:=F(v^k,x^k)$, $\beta:=\beta_k$ and $d:=d^k$, 
obtaining \eqref{lemma:recursion:eq1}. 

We now take the conditional expectation with respect to $\alg_k$ in \eqref{lemma:recursion:eq1} obtaining,
\begin{eqnarray}	
\esp\left[\Vert x^{k+1}-x^*\Vert^2\big|\alg_k\right]&\le &\Vert x^k-x^*\Vert^2-2\alpha_k\langle x^k-x^*,T(x^k)\rangle+\left[1+\tau\beta_k(2-\beta_k)\right]\alpha_k^2\esp\left[\Vert F(v^k,x^k)\Vert^2\big|\alg_k\right]\nonumber\\
&&-\frac{\beta_k(2-\beta_k)}{C_g^2}\left(1-\frac{1}{\tau}\right)\esp\left[\left(g^+_{\omega_k}(x^k)\right)^2\big|\alg_k\right]\nonumber\\
&\le &\Vert x^k-x^*\Vert^2+2\alpha_k\langle x^*-x^k,T(x^k)\rangle+\left[1+\tau\beta_k(2-\beta_k)\right]\alpha_k^2\esp\left[\Vert F(v^k,x^k)\Vert^2\big|\alg_k\right]\nonumber\\
&&-\frac{\beta_k(2-\beta_k)}{c\cdot C_g^2}\left(1-\frac{1}{\tau}\right)\dist(x^k,X)^2,\label{lemma:recursion:eq2}
\end{eqnarray}
using $x^k\in\alg_k$  and Assumption \ref{unbiased} in the first inequality, 
and Assumption \ref{assump:constraint:reg} in the second inequality.

Next, we will bound the second term in the right hand side of \eqref{lemma:recursion:eq2}. We write
\begin{equation}\label{eq8}
\langle T(x^k),x^*-x^k\rangle=\langle T(x^k)-T(x^*),x^*-x^k\rangle+
\langle T(x^*),x^*-\Pi_X(x^k)\rangle+
\langle T(x^*),\Pi_X(x^k)-x^k\rangle.
\end{equation}
By monotonicity of $T$ (Assumption \ref{monotonicity}), the first term in the right hand side of \eqref{eq8} satisfies
\begin{equation}\label{term1}
\langle T(x^k)-T(x^*),x^*-x^k\rangle\le0.
\end{equation}
Regarding the second term in the right hand side of \eqref{eq8},  
the weak sharpness property (Assumption \ref{weakly.sharp.hyphotesis}) and the fact that $x\in X^*$ imply
\begin{equation}\label{term2}
\langle T(x^*),x^*-\Pi_X(x^k)\rangle
\le-\rho\dist\left(\Pi_X(x^k),X^*\right).
\end{equation}
We now observe that 
$
\left|\dist\left(\Pi_X(x^k),X^*\right)-\dist(x^k,X^*)\right|\le\Vert \Pi_X(x^k)-x^k\Vert=\dist(x^k,X),
$ 
so that
\begin{equation}\label{assump:recursion:eq3}
\dist\left(\Pi_X(x^k),X^*\right)\ge\dist(x^k,X^*)-\dist(x^k,X).
\end{equation}
From \eqref{term2}-\eqref{assump:recursion:eq3}, we get 
\begin{equation}\label{term2.2}
\langle T(x^*),x^*-\Pi_X(x^k)\rangle
\le -\rho\dist(x^k,X^*)+\rho\dist(x^k,X).
\end{equation}
Concerning the third term in the right hand side of \eqref{eq8}, we have
\begin{equation}\label{term3}
\langle T(x^*),\Pi_X(x^k)-x^k\rangle
\le\Vert T(x^*)\Vert\Vert\Pi_X(x^k)-x^k\Vert
\le B(x^*)\dist(x^k,X),
\end{equation}
using Cauchy-Schwarz inequality in the first inequality, and the definition of $B(x^*)$ in Lemma \ref{l1} in the second inequality. 
Combining \eqref{term1}, \eqref{term2.2} and \eqref{term3} with \eqref{eq8}, we finally get
\begin{equation}\label{eq9}
\langle T(x^k),x^*-x^k\rangle\le-\rho\dist(x^k,X^*)+\left(\rho+B(x^*)\right)\dist(x^k,X).
\end{equation}

We use \eqref{eq9} in \eqref{lemma:recursion:eq2} and get
\begin{eqnarray}
\esp\big[\Vert x^{k+1}-x^*\Vert^2\big|\alg_k\big]&\le &\Vert x^{k}-x^*\Vert^2-2\rho\alpha_k\dist(x^k,X^*)
+\left[1+\tau\beta_k(2-\beta_k)\right]\alpha_k^2\esp\left[\Vert F(v^k,x^k)\Vert^2\big|\alg_k\right]\nonumber\\
&&-\frac{\beta_k(2-\beta_k)}{c\cdot C_g^2}\left(1-\frac{1}{\tau}\right)\dist(x^k,X)^2
+2(\rho+B(x^*))\alpha_k\dist(x^k,X).\label{eq10}
\end{eqnarray}

From Lemma \ref{l1} and the fact that $x^k\in\alg_k$, we obtain
\begin{equation}\label{lemma:recursion:eq4}
\esp\big[\Vert F(v^k,x^k)\Vert^2\big|\alg_k\big]\le 2L^2\Vert x^k-x^*\Vert^2+2B(x^*)^2.
\end{equation}

Now we rearrange the last two terms in the right hand side of \eqref{eq10}, using the fact that
$2ab\le\lambda a^2+\frac{b^2}{\lambda}$ for any $\lambda>0$. With $a:=\dist(x^k,X)$, $b:=\mathsf{C}(x^*)\alpha_k$ 
and $\lambda:=\mathsf{A}_{k,\tau}$ we get
\begin{equation}\label{eee1}
-\mathsf{A}_{k,\tau}\dist(x^k,X)^2+2\mathsf{C}(x^*)\alpha_k\dist(x^k,X)
\le\frac{\mathsf{C}(x^*)^2\alpha_k^2}{\mathsf{A}_{k,\tau}}.
\end{equation}
Putting together relations \eqref{eq10}-\eqref{eee1} and rearranging terms, 
we finally get \eqref{lemma:recursion1}, as requested. 

Alternatively, we can replace \eqref{eee1} by the bound
\begin{equation}\label{eee1:feas}
-\mathsf{A}_{k,\tau}\dist(x^k,X)^2+2\mathsf{C}(x^*)\alpha_k\dist(x^k,X)
\le-\frac{\mathsf{A}_{k,\tau}}{2}\dist(x^k,X)^2+\frac{2\mathsf{C}(x^*)^2\alpha_k^2}{\mathsf{A}_{k,\tau}},
\end{equation}
using the fact that $2ab\le\lambda a^2+\frac{b^2}{\lambda}$ with $a:=\dist(x^k,X)$, $b:=\mathsf{C}(x^*)\alpha_k$ 
and $\lambda:=\mathsf{A}_{k,\tau}/2$. Putting together relations \eqref{eq10}-\eqref{lemma:recursion:eq4} 
and \eqref{eee1:feas} and rearranging terms, we get \eqref{lemma:recursion1:feas}, as requested.

Suppose now that Assumption \ref{lipschitz}(ii) holds. In this case, the inequalities in \eqref{term3} can be replaced by
\begin{equation}\label{term3:unif}
\langle T(x^*),\Pi_X(x^k)-x^k\rangle
\le\Vert T(x^*)\Vert\Vert\Pi_X(x^k)-x^k\Vert
\le \sqrt{2C_F}\dist(x^k,X),
\end{equation}
using Assumption \ref{lipschitz}(ii) and  
the fact that $\Vert T(x^*)\Vert^2\le\esp\big[\Vert F(v^k,x^*)\Vert^2\big|\alg_k\big]\le2C_F^2$, which follows from Jensen's inequality,
in the last inequality. 
Hence, combining \eqref{term1}, \eqref{term2.2} and \eqref{term3:unif} we get, instead of \eqref{eq9}, 
\begin{equation}\label{eq9:unif}
\langle T(x^k),x^*-x^k\rangle\le-\rho\dist(x^k,X^*)+\left(\rho+\sqrt{2C_F}\right)\dist(x^k,X).
\end{equation}
Using Assumption \ref{lipschitz}(ii) and \eqref{eq9:unif} 
in \eqref{lemma:recursion:eq2} we get
\begin{eqnarray}
\esp\big[\Vert x^{k+1}-x^*\Vert^2\big|\alg_k\big]&\le &
\Vert x^{k}-x^*\Vert^2-2\rho\alpha_k\dist(x^k,X^*)
+2C_F^2\left[1+\mathsf{B}_{k}\tau\right]\alpha_k^2\nonumber\\
&&-\mathsf{A}_{k,\tau}\dist(x^k,X)^2
+2\left(\rho+\sqrt{2C_F}\right)\alpha_k\dist(x^k,X).\label{eq10:unif}
\end{eqnarray}
In view of Assumption \ref{existence}, we define $\bar x^k:=\Pi_{X^*}(x^k)$. 
Note that $\bar x^k\in\alg_k$ because $\Pi_{X^*}$ is continuous and $x^k\in\alg_k$. 
From \eqref{eq10:unif} we get 
\begin{eqnarray}
\esp\big[\dist(x^{k+1},X^*)^2\big|\alg_k\big]&\le &
\esp\big[\Vert x^{k+1}-\bar x^k\Vert^2\big|\alg_k\big]\le 
\dist(x^{k},X^*)^2-2\rho\alpha_k\dist(x^k,X^*)
+2C_F^2\left[1+\mathsf{B}_{k}\tau\right]\alpha_k^2\nonumber\\
&&-\mathsf{A}_{k,\tau}\dist(x^k,X)^2
+2\left(\rho+\sqrt{2C_F}\right)\alpha_k\dist(x^k,X),\label{eq10:unif:dist}
\end{eqnarray}
using $x^k,\bar x^k\in\alg_k$, $\Vert x^k-\bar x^k\Vert=\dist(x^k,X^*)$ and \eqref{eq10:unif} in 
the second inequality. We rearrange now the last two terms in the right hand side of \eqref{eq10:unif:dist} 
(in a way similar to \eqref{eee1} or \eqref{eee1:feas}), and obtain \eqref{lemma:recursion2} or \eqref{lemma:recursion2:feas}.
\end{proof}
	
\begin{theorem}[Asymptotic convergence]\label{convergence}
Under Assumptions \ref{existence}-\ref{weakly.sharp.hyphotesis}, method \eqref{algo.extra.approx.eq1}-\eqref{algo.extra.approx.eq2} 
generates a sequence $\{x^k\}$ which a.s. is bounded and 
$
\lim_{k\rightarrow\infty}\dist(x^k,X^*)=0.
$
In particular, a.s. all cluster points of $\{x^k\}$ belong to $X^*$.
\end{theorem}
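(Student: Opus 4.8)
The plan is to feed the recursive estimates of Lemma \ref{lema:recursion} into the Robbins--Siegmund theorem (Theorem \ref{rob}), and then to upgrade the resulting $\liminf$-type conclusion to a true limit via a quasi-Fej\'er argument over a countable dense subset of $X^*$.

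First I would fix $x^*\in X^*$ and a constant $\tau>1$ (say $\tau=2$). Under Assumption \ref{lipschitz}(i), relation \eqref{lemma:recursion1} is exactly of the form required by Theorem \ref{rob} with $y_k=\|x^k-x^*\|^2$, $a_k=2(1+\mathsf{B}_k\tau)L^2\alpha_k^2$, $u_k=2\rho\alpha_k\dist(x^k,X^*)$ and $b_k=\big[\mathsf{C}(x^*)^2/\mathsf{A}_{k,\tau}+2(1+\mathsf{B}_k\tau)B(x^*)^2\big]\alpha_k^2$. The routine check is that $\sum_k a_k<\infty$ and $\sum_k b_k<\infty$: since $\beta_k\in(0,2)$ we have $\mathsf{B}_k=\beta_k(2-\beta_k)\in(0,1]$, so $a_k\le 2(1+\tau)L^2\alpha_k^2$ and $2(1+\mathsf{B}_k\tau)B(x^*)^2\alpha_k^2\le 2(1+\tau)B(x^*)^2\alpha_k^2$ are summable by $\sum_k\alpha_k^2<\infty$ (Assumption \ref{approx.step}), while $\mathsf{A}_{k,\tau}=\mathsf{B}_k(\tau-1)/(cC_g^2\tau)$ gives $\alpha_k^2/\mathsf{A}_{k,\tau}=\frac{cC_g^2\tau}{\tau-1}\cdot\frac{\alpha_k^2}{\beta_k(2-\beta_k)}$, summable by the third part of Assumption \ref{approx.step}; here $B(x^*)$ and $\mathsf{C}(x^*)=\rho+B(x^*)$ are finite by Lemma \ref{l1}. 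Theorem \ref{rob} then yields, almost surely, that $\|x^k-x^*\|$ converges (so $\{x^k\}$ is bounded) and that $\sum_k\alpha_k\dist(x^k,X^*)<\infty$; since $\sum_k\alpha_k=\infty$ this forces $\liminf_k\dist(x^k,X^*)=0$.

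To turn this $\liminf$ into a limit I would use the standard quasi-Fej\'er device. Let $D\subset X^*$ be countable and dense in $X^*$. For each $x^*\in D$ the previous step provides a full-measure event on which $\|x^k-x^*\|$ converges; intersecting these countably many events with the full-measure event $\{\liminf_k\dist(x^k,X^*)=0\}$, I work on one full-measure event. On it, pick a subsequence along which $\dist(x^k,X^*)\to0$; by boundedness a further subsequence $x^{k_\ell}\to\bar x$, and $\bar x\in X^*$ since $X^*$ is closed. Given $\varepsilon>0$, choose $x^*\in D$ with $\|\bar x-x^*\|<\varepsilon$; then $\lim_k\|x^k-x^*\|$ exists and equals $\lim_\ell\|x^{k_\ell}-x^*\|=\|\bar x-x^*\|<\varepsilon$, whence $\limsup_k\dist(x^k,X^*)\le\lim_k\|x^k-x^*\|<\varepsilon$. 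As $\varepsilon$ is arbitrary, $\dist(x^k,X^*)\to0$ a.s., and therefore every cluster point of $\{x^k\}$ lies in the closed set $X^*$.

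Under Assumption \ref{lipschitz}(ii) the argument is shorter, since \eqref{lemma:recursion2} is already a Robbins--Siegmund recursion with zero multiplicative term for $y_k=\dist(x^k,X^*)^2$ (the summability of the error term being verified exactly as above): Theorem \ref{rob} gives that $\dist(x^k,X^*)^2$ converges a.s. and $\sum_k\alpha_k\dist(x^k,X^*)<\infty$, so $\liminf_k\dist(x^k,X^*)=0$ and the convergent sequence $\dist(x^k,X^*)^2$ must converge to $0$ --- no dense-subset step is needed. Boundedness in this case I would obtain by applying Theorem \ref{rob} to the intermediate bound on $\|x^{k+1}-x^*\|^2$ appearing in the proof of Lemma \ref{lema:recursion} (inequality \eqref{eq10:unif} after absorbing its last two terms in the manner of \eqref{eee1}), which again has no multiplicative factor. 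The only step that demands genuine care is the $\liminf$-to-$\lim$ upgrade in case (i): each conclusion of Theorem \ref{rob} holds only on an $x^*$-dependent almost sure event, which is precisely why the countable dense subset of $X^*$ is required; the rest is bookkeeping with the summability hypotheses in Assumption \ref{approx.step}.
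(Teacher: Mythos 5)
Your proof is correct, and its first half coincides with the paper's: both apply Theorem \ref{rob} to \eqref{lemma:recursion1} (resp.\ \eqref{lemma:recursion2}) for a fixed $x^*\in X^*$ to obtain a.s.\ boundedness, a.s.\ convergence of $\Vert x^k-x^*\Vert$, and $\sum_k\alpha_k\dist(x^k,X^*)<\infty$, hence $\liminf_k\dist(x^k,X^*)=0$. Where you diverge is in upgrading the $\liminf$ to a limit under Assumption \ref{lipschitz}(i). The paper exploits the fact that Lemma \ref{lema:recursion} holds for \emph{every} $x^*\in X^*$ and substitutes the moving point $\bar x^k:=\Pi_{X^*}(x^k)$, which is $\alg_k$-measurable with $\Vert x^k-\bar x^k\Vert=\dist(x^k,X^*)$; since $\{x^k\}$ is a.s.\ bounded and $B(\cdot)$ is locally bounded, the resulting error terms $[\mathsf{C}(\bar x^k)/\mathsf{A}_{k,\tau}+2(1+\mathsf{B}_k\tau)B(\bar x^k)^2]\alpha_k^2$ are a.s.\ summable, so a second application of Theorem \ref{rob} gives a.s.\ convergence of $\dist(x^k,X^*)^2$ directly, and the $\liminf$ identifies the limit as zero. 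You instead run the classical quasi-Fej\'er argument over a countable dense subset $D\subset X^*$: intersect the countably many full-measure events on which $\Vert x^k-x^*\Vert$ converges for $x^*\in D$, extract a subsequential limit $\bar x\in X^*$, and approximate $\bar x$ by points of $D$ to force $\limsup_k\dist(x^k,X^*)<\varepsilon$. Both routes are sound; the paper's buys a single uniform recursion for $\dist(x^k,X^*)^2$ (reused verbatim in the rate analysis of Subsection \ref{ss2.4}) at the cost of checking measurability and a.s.-boundedness of the random constants, while yours needs only the fixed-point conclusion of Theorem \ref{rob} plus separability of $X^*$, at the cost of a pointwise limit-identification argument that does not by itself produce a usable recursion in $\dist(\cdot,X^*)$. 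Your treatment of case (ii) — handling boundedness separately via the intermediate bound \eqref{eq10:unif} with the absorption \eqref{eee1}, since $\dist(x^k,X^*)\to0$ alone does not imply boundedness when $X^*$ is unbounded — is a correct reading of what the paper leaves implicit in its closing sentence.
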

\begin{proof}
We begin by imposing Assumption \ref{lipschitz}(i). Choose some $x^*\in X^*$ (Assumption \ref{existence}) and $\tau>1$. 
By Assumption \ref{approx.step} and the definitions 
given in Lemma \ref{lema:recursion}, we have $\sum_k\alpha_k^2<\infty$, $\sum_k\alpha_k^2\mathsf{A}_{k,\tau}^{-1}<\infty$ 
and $0<\mathsf{B}_{k}\tau\le\tau$, since $\beta_k(2-\beta_k)\in(0,1]$, 
for $\beta_k\in(0,2)$ for all $k$. Hence, we can invoke 
\eqref{lemma:recursion1} in 
Theorem \ref{rob} in order to 
to conclude that, a.s., $\{\Vert x^k-x^*\Vert\}$ converges and, in particular, $\{x^k\}$ is bounded. 

In view of Assumption \ref{existence}, we can define $\bar x^k:=\Pi_{X^*}(x^k)$. We have $\bar x^k\in\alg_k$ 
because $x^k\in\alg_k$ and $\Pi_{X^*}$ is continuous. Since \eqref{lemma:recursion1} in  Lemma \ref{lema:recursion} 
holds for any $x^*\in X^*$ and $\dist(x^k,X^*)=\Vert x^k-\bar x^k\Vert$, we conclude that for all $k\in\mathbb{N}$,
\begin{eqnarray}
\esp\big[\dist(x^{k+1},X^*)^2\big|\alg_k\big] &\le &\esp\big[\Vert x^{k+1}-\bar x^k\Vert^2\big|\alg_k\big]\nonumber\\ 
&\le &\left[1+2\left(1+\mathsf{B}_{k}\tau\right)L^2\alpha_k^2\right]\Vert x^{k}-\bar x^k\Vert^2-2\rho\alpha_k\dist(x^k,X^*)\nonumber\\
&+&\left[\frac{\mathsf{C}(\bar x^k)}{\mathsf{A}_{k,\tau}}+2\left(1+\mathsf{B}_{k}\tau\right)B(\bar x^k)^2\right]\alpha_k^2\nonumber\\
&=&\left[1+2\left(1+\mathsf{B}_{k}\tau\right)L^2\alpha_k^2\right]\dist(x^k,X^*)^2-2\rho\alpha_k\dist(x^k,X^*)\nonumber\\
&+&\left[\frac{\mathsf{C}(\bar x^k)}{\mathsf{A}_{k,\tau}}+2\left(1+\mathsf{B}_{k}\tau\right)B(\bar x^k)^2\right]\alpha_k^2,\label{eq12}
\end{eqnarray}
using relation \eqref{lemma:recursion1} and $\bar x^k\in\alg_k$ in the second inequality.  

We observe that	the function $B:X^*\rightarrow\re_+$ defined in Lemma \ref{l1} is locally bounded because 
$T$ is continuous. Using this fact, the continuity of $\Pi_{X^*}$, 
the a.s.-boundedness of $\{x^k\}$ and $\bar x^k=\Pi_{X^*}(x^k)$, 
we conclude that $\{B(\bar x^k)\}$ and $\{\mathsf{C}(\bar x^k)\}$ are a.s.-bounded. From the a.s.-boundedness 
of $\{B(\bar x^k)\}$ and $\{\mathsf{C}(\bar x^k)\}$ and the conditions $\sum_k\alpha_k^2<\infty$, 
$\sum_k\alpha_k^2\mathsf{A}_{k,\tau}^{-1}<\infty$ and $0<\mathsf{B}_{k}\tau\le\tau$ for all $k$, which hold 
by Assumption \ref{approx.step}, we conclude from Theorem \ref{rob} and \eqref{eq12} that a.s. $\{\dist^2(x^k,X^*)\}$ converges, and 
$$
\sum_{k=0}^\infty2\rho\alpha_k\dist(x^k,X^*)<\infty.
$$
By  Assumption \ref{approx.step}, we also have that $\sum_k\alpha_k=\infty$, so that the above relation implies a.s.
$
\liminf_{k\rightarrow\infty}\dist(x^k,X^*)=0.
$
In particular, the sequence $\{\dist(x^k,X^*)\}$ has a subsequence that converges to zero almost surely. 
Since $\{\dist(x^k,X^*)\}$ a.s. converges, we conclude that the whole sequence a.s. converges to $0$. 
The proof under Assumption \ref{lipschitz}(ii) is similar, using \eqref{lemma:recursion2}.
\end{proof}

\subsection{Convergence rate analysis}\label{ss2.4}

In this subsection we present convergence rate results for the method \eqref{algo.extra.approx.eq1}-\eqref{algo.extra.approx.eq2} 
under the weak sharpness property \eqref{weakly.sharp.assumption}. The solvability metric will be $\dist(\cdot,X^*)$ while the 
feasibility metric will be $\dist(\cdot,X)^2$. We define, for $\ell\le k$, 
\begin{equation}\label{def:S:hatx:weaksharp}	
\mathsf{S}_\ell^k:=\sum_{i=\ell}^k\alpha_i,\quad
\widehat x^k:=\frac{\sum_{i=0}^k\alpha_i x^i}{\mathsf{S}_0^k},\quad
\widehat x^k_\ell:=\frac{\sum_{i=\ell}^k\alpha_i x^i}{\mathsf{S}_\ell^k},
\end{equation}
\begin{equation}\label{def:Z:tildex:weaksharp}
\mathsf{Z}_{\ell}^k:=\sum_{i=\ell}^k\mathsf{B}_i,\quad
\widetilde x^k:=\frac{\sum_{i=0}^k\mathsf{B}_i x^i}{\mathsf{Z}_0^k},\quad
\widetilde x^k_\ell:=\frac{\sum_{i=\ell}^k\mathsf{B}_i x^i}{\mathsf{Z}_\ell^k},
\end{equation}
where $\widehat x^k$ is the ergodic average of the iterates and $\widehat x^k_\ell$ is the window-based ergodic average of the iterates 
when the stepsizes $\{\alpha_k\}$ are used to compute the weights. The solvability metric will be given in terms of 
$\widehat x^k$ or $\widehat x^k_\ell$. The definitions of $\widetilde x^k$ and $\widetilde x^k_\ell$ are analogous, 
but using $\mathsf{B}_k=\beta_k(2-\beta_k)$ for computing the weights. The feasibility metric will be given in terms of such ergodic averages. 

In order to obtain convergence rates for the case of an unbounded feasible set $X$ or unbounded constraint sets
$\{X_0\}\cup\{X_i:i\in\mathcal{I}\}$, we shall need the following proposition, which ensures that the sequence is bounded in $L^2$. 
A typical situation is the case in which $X$ is a polyhedron, i.e. $X_0=\re^n$ and the selected constraints 
$\{X_i\}_{i\in\mathcal{I}}$ are halfspaces, which have easily computable projections but are unbounded sets. 
If the uniform bound of Assumption \ref{lipschitz}(ii) holds, then sharper bounds are given 
in \eqref{prop:mean:bound2}. We shall define for $\tau>1$,
\begin{equation}\label{def:G:H}
\mathsf{G}_\tau:=c C_g^2\tau(\tau-1)^{-1},\quad
\mathsf{H}_\tau:=2\left(1+\tau\right),
\end{equation}
and for $\ell\le k$,	
\begin{equation}\label{def:ak:bk}
\mathsf{a}_\ell^k:=\sum_{i=\ell}^k\alpha_i^2,\quad 
\mathsf{b}_\ell^k:=\sum_{i=\ell}^k\frac{\alpha_i^2}{\beta_i(2-\beta_i)}.
\end{equation}

\begin{proposition}[Boundedness in $L^2$]\label{prop:boundedness:L1}
Suppose that Assumptions \ref{existence}-\ref{weakly.sharp.hyphotesis} hold.

Under Assumption \ref{lipschitz}(i), choose $\tau>1$, $k_0\in\mathbb{N}$ and $0<\gamma<\frac{1}{2(1+\tau)L^2}$ such that
\begin{equation}\label{prop:mean:bound}
\sum_{k\ge k_0}\frac{\alpha^2_k}{\beta_k(2-\beta_k)}<\gamma.
\end{equation}
Then for all $x^*\in X^*$,
\begin{equation}\label{prop:mean:bound1}
\sup_{k\ge k_0}\esp\left[\Vert x^{k}-x^*\Vert^2\right]\le\frac{\esp\left[\Vert x^{k_0}-x^*\Vert^2\right]+
\left[\mathsf{G}_\tau\mathsf{C}(x^*)^2+\mathsf{H}_\tau B(x^*)^2\right]\gamma}{1-\mathsf{H}_\tau L^2\gamma}.
\end{equation}
	
If Assumption \ref{lipschitz}(ii) holds, then for all $k\in\mathbb{N}$,
\begin{equation}\label{prop:mean:bound2}
\sup_{0\le i\le k}\esp\left[\dist(x^i,X^*)^2\right]\le
\dist(x^0,X^*)^2+\mathsf{G}_\tau\left(\rho+\sqrt{2C_F}\right)^2
\cdot\mathsf{b}_0^{k-1}+\mathsf{H}_\tau C_F^2\cdot\mathsf{a}_0^{k-1}.
\end{equation}
\end{proposition}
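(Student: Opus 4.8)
The plan is to reduce the statement to the recursive relations of Lemma~\ref{lema:recursion} and then iterate a scalar recursion on the sequence of expectations, using throughout the two elementary facts that $\mathsf{B}_k=\beta_k(2-\beta_k)\in(0,1]$ whenever $\beta_k\in(0,2)$, and that $\mathsf{A}_{k,\tau}^{-1}=\mathsf{G}_\tau/\mathsf{B}_k$, both immediate from the definitions in \eqref{def:Bk:Ak:C} and \eqref{def:G:H}. In particular $2(1+\mathsf{B}_k\tau)\le\mathsf{H}_\tau$ and $\alpha_k^2\le\alpha_k^2/\mathsf{B}_k$, which is how the various coefficients in the recursions will collapse into $\mathsf{G}_\tau$, $\mathsf{H}_\tau$ and the sums $\mathsf{a}_0^{k-1}$, $\mathsf{b}_0^{k-1}$.

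For the part under Assumption~\ref{lipschitz}(i), I would take \eqref{lemma:recursion1}, discard the nonpositive term $-2\rho\alpha_k\dist(x^k,X^*)$, bound the coefficient of $\Vert x^k-x^*\Vert^2$ by $1+\mathsf{H}_\tau L^2\alpha_k^2$, and bound the remaining additive term by $\big[\mathsf{G}_\tau\mathsf{C}(x^*)^2+\mathsf{H}_\tau B(x^*)^2\big]\,\alpha_k^2/\mathsf{B}_k$. Taking full expectations is legitimate by Tonelli, with integrability of $\Vert x^k-x^*\Vert^2$ propagating by induction from the deterministic $x^0$; writing $a_k:=\esp[\Vert x^k-x^*\Vert^2]$ and $D:=\mathsf{G}_\tau\mathsf{C}(x^*)^2+\mathsf{H}_\tau B(x^*)^2$, this gives $a_{k+1}\le(1+\mathsf{H}_\tau L^2\alpha_k^2)a_k+D\,\alpha_k^2/\mathsf{B}_k$ for $k\ge k_0$. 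Summing from $k_0$ to $n-1$ and using \eqref{prop:mean:bound} together with $\sum_{k\ge k_0}\alpha_k^2\le\sum_{k\ge k_0}\alpha_k^2/\mathsf{B}_k<\gamma$ yields
\[
a_n\le a_{k_0}+\mathsf{H}_\tau L^2\gamma\cdot\Big(\max_{k_0\le k\le n-1}a_k\Big)+D\gamma .
\]

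The step I expect to require the most care is converting this into the claimed \emph{uniform} bound, since a naive iteration of the recursion would only produce a product-type bound. I would set $M_n:=\max_{k_0\le k\le n}a_k$, which is nondecreasing, and argue by induction on $n$ that $M_n\le(a_{k_0}+D\gamma)/(1-\mathsf{H}_\tau L^2\gamma)$; the base case $M_{k_0}=a_{k_0}$ holds because $D\gamma\ge0$ and $0<1-\mathsf{H}_\tau L^2\gamma<1$, using $\gamma<\frac{1}{2(1+\tau)L^2}=\frac{1}{\mathsf{H}_\tau L^2}$. For the inductive step, if $a_n\le M_{n-1}$ then $M_n=M_{n-1}$ and we are done by the inductive hypothesis; otherwise $M_n=a_n$, and the previous display together with $M_{n-1}\le M_n$ gives $M_n\le a_{k_0}+\mathsf{H}_\tau L^2\gamma\,M_n+D\gamma$, which can be solved for $M_n$ since $\mathsf{H}_\tau L^2\gamma<1$. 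Letting $n\to\infty$ and recalling $a_{k_0}=\esp[\Vert x^{k_0}-x^*\Vert^2]$ yields \eqref{prop:mean:bound1}.

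The part under Assumption~\ref{lipschitz}(ii) is simpler, since \eqref{lemma:recursion2} carries no multiplicative factor on $\dist(x^k,X^*)^2$. I would discard the term $-2\rho\alpha_k\dist(x^k,X^*)$, rewrite $\alpha_k^2/\mathsf{A}_{k,\tau}=\mathsf{G}_\tau\,\alpha_k^2/(\beta_k(2-\beta_k))$, bound $2(1+\mathsf{B}_k\tau)C_F^2\le\mathsf{H}_\tau C_F^2$, and take full expectations, obtaining with $d_k:=\esp[\dist(x^k,X^*)^2]$ the estimate $d_{k+1}\le d_k+\mathsf{G}_\tau(\rho+\sqrt{2C_F})^2\,\alpha_k^2/(\beta_k(2-\beta_k))+\mathsf{H}_\tau C_F^2\alpha_k^2$. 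Summing from $0$ to $n-1$ for any $n\le k$, using $d_0=\dist(x^0,X^*)^2$ (as $x^0$ is deterministic) and the monotone truncations $\sum_{j=0}^{n-1}\alpha_j^2/(\beta_j(2-\beta_j))\le\mathsf{b}_0^{k-1}$ and $\sum_{j=0}^{n-1}\alpha_j^2\le\mathsf{a}_0^{k-1}$, then maximizing over $0\le n\le k$, produces \eqref{prop:mean:bound2}. No further idea is needed here beyond keeping track of the index ranges of the partial sums.
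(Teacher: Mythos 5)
Your proof is correct and follows essentially the same route as the paper: both take total expectations in \eqref{lemma:recursion1} (resp. \eqref{lemma:recursion2}), discard the nonpositive term $-2\rho\alpha_k\dist(x^k,X^*)$, telescope, and exploit $\mathsf{H}_\tau L^2\gamma<1$ to absorb the self-referential term; the second part is identical to the paper's. The only difference is in how the uniform bound is extracted from the self-bounding inequality: you run an induction on the partial maximum $M_n=\max_{k_0\le k\le n}a_k$, which is automatically finite at each stage, whereas the paper first establishes $\sup_{k\ge k_0}z_k<\infty$ by a contradiction argument involving the first-passage index $\Gamma_a=\inf\{k\ge k_0: z_k>a^2\}$ and then solves the fixed-point inequality for $\hat a^2=\sup_{k\ge k_0}z_k$. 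Both are valid; your variant is arguably a cleaner execution of that one technical step, since it sidesteps the separate finiteness argument (provided, as you note, that $\esp[\Vert x^k-x^*\Vert^2]<\infty$ is propagated inductively from the deterministic initial iterate).
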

\begin{proof}
We first prove \eqref{prop:mean:bound1} under Assumption \ref{lipschitz}(i). 
Recall the definitions of $\mathsf{A}_{k,\tau}$ and $\mathsf{B}_{k}\tau$ in \eqref{def:Bk:Ak:C}.
By Assumption \ref{approx.step}, we can choose $k_0\in\mathbb{N}$ and $\gamma>0$ such that 
\eqref{prop:mean:bound} holds. Observe that 
$\beta_k(2-\beta_k)\in(0,1]$, 
because $\beta_k\in(0,2)$, 
so that 
$$
\sum_{k\ge k_0}\alpha^2_k\le\sum_{k\ge k_0}\frac{\alpha^2_k}{\beta_k(2-\beta_k)}<\gamma.
$$

Fix $x\in X^*$ and $\tau>1$. Define 
$$
z_k:=\esp[\Vert x^k-x^*\Vert^2], \quad
D_k^2:=\frac{\mathsf{C}(x^*)^2}{\mathsf{A}_{k,\tau}}+2\left(1+\mathsf{B}_{k}\tau\right)B(x^*)^2,\quad
D^2:=\frac{\mathsf{C}(x^*)^2c C_g^2\tau}{\tau-1}+2\left(1+\tau\right)B(x^*)^2.
$$
For any $k>k_0$, we take the total expectation and sum \eqref{lemma:recursion1} from $k_0$ to $k-1$, obtaining
\begin{equation}\label{prop:telescope}
z_k\le z_{k_0}+ 
\sum_{i=k_0}^{k-1}\left[2\left(1+\mathsf{B}_{i}\tau\right)L^2\alpha_i^2z_i+D_i^2\alpha_i^2\right].
\end{equation}
Given an arbitrary $a>z_{k_0}^{1/2}$, define
\begin{equation}\label{prop:tau}
\Gamma_a:=\inf\{k\ge k_0:z_k>a^2\}.
\end{equation}
Suppose first that $\Gamma_a<\infty$ for all $a>z_{k_0}^{1/2}$. Then by \eqref{prop:mean:bound}, 
\eqref{prop:telescope} and \eqref{prop:tau} we get
$$
a^2<z_{\Gamma_a}\le z_{k_0}+ 
\sum_{i=k_0}^{\Gamma_a-1}\left[2\left(1+\mathsf{B}_{i}\tau\right)L^2\alpha_i^2a^2+D_i^2\alpha_i^2\right]
\le z_{k_0}+2(1+\tau)L^2\gamma a^2+D^2\gamma,
$$
using the fact that $\beta_i(2-\beta_i)\in(0,1]$ in the definition of $\mathsf{B}_{i}\tau$, 
and the definitions of $\mathsf{A}_{i,\tau}$, $D_i^2$ and $D^2$. Hence 
$$
a^2\le\frac{z_{k_0}+D^2\gamma}{1-2(1+\tau)L^2\gamma},
$$
using the fact that $0<\gamma<[2(1+\tau)L^2]^{-1}$. Since $a>z_{k_0}^{1/2}$ is arbitrary, it follows that 
\begin{equation}\label{prop:bound:case1}
\sup_{k\ge k_0}z_k\le\frac{z_{k_0}+D^2\gamma}{1-2(1+\tau)L^2\gamma},
\end{equation}
using again the fact that  $0<\gamma<[2(1+\tau)L^2]^{-1}$. In view of \eqref{prop:tau}-\eqref{prop:bound:case1},
we have a contradiction with the 
assumption that $\Gamma_a<\infty$ for any $a>z_{k_0}^{1/2}$. Hence, there exists some $\bar a>z_{k_0}^{1/2}$ such that 
$\Gamma_{\bar a}=\infty$, so that the set in the right hand side of \eqref{prop:tau} is empty. 
In this case  we have $\sup_{k\ge k_0}z_k\le \bar a^2<\infty$. If $\sup_{k\ge k_0}z_k=z_{k_0}$, 
then \eqref{prop:mean:bound1} holds trivially, since $1-\mathsf{H}_\tau L^2\gamma\in(0,1)$. 
Otherwise, $\hat a:=(\sup_{k\ge k_0}z_k)^{1/2}>z_{k_0}^{1/2}$. From \eqref{prop:mean:bound}, \eqref{prop:telescope},  
$\beta_i\in(0,2)$ and the definitions of $\mathsf{A}_{i,\tau}$, $\mathsf{B}_{i}\tau$, 
$D_i^2$ and $D$, we have for all $k\ge k_0$,
$$
z_k\le z_{k_0}+ 
\sum_{i=k_0}^{k-1}\left[2\left(1+\mathsf{B}_{i}\tau\right)L^2\alpha_i^2\hat a^2+D_i^2\alpha_i^2\right]
\le z_{k_0}+2(1+\tau)L^2\gamma\hat a^2+D^2\gamma,
$$
implying that
$
\hat a^2=\sup_{k\ge k_0}z_k\le z_{k_0}+2(1+\tau)L^2\gamma\hat a^2+D^2\gamma,
$
so that
\begin{equation}\label{prop:bound:case2}
\sup_{k\ge k_0}z_k=\hat a^2\le\frac{z_{k_0}+D^2\gamma}{1-2(1+\tau)L^2\gamma},
\end{equation}
using again $0<\gamma<[2(1+\tau)L^2]^{-1}$. From \eqref{prop:bound:case2} and the
definitions of $\mathsf{G}_\tau$, $\mathsf{H}_\tau$ and $D$, we conclude that \eqref{prop:mean:bound1} holds.

We now prove \eqref{prop:mean:bound2} under Assumption \ref{lipschitz}(ii). 
As before, we define
$$
\widehat D_k^2:=\frac{\left(\rho+\sqrt{2C_F}\right)^2}{\mathsf{A}_{k,\tau}}+2\left(1+\mathsf{B}_{k}\tau\right)C_F^2,
$$
Taking total expectation in \eqref{lemma:recursion2} and summing from $0$ to $k-1$, we get 
\begin{equation}\label{e101}
\esp\left[\dist(x^k,X^*)^2\right]\le\dist(x^0,X^*)^2+\sum_{i=0}^{k-1}\widehat D_i^2\alpha_i^2
\le\dist(x^0,X^*)^2+\left(\rho+\sqrt{2C_F}\right)^2\frac{c C_g^2\tau}{\tau-1}\mathsf{b}_0^{k-1}+
2(1+\tau)C_F^2\mathsf{a}_0^{k-1},
\end{equation}
for all $k\ge0$,
using the fact that $\beta_i\in(0,2)$ and the definitions of $\mathsf{A}_{i,\tau}$, $\mathsf{B}_{i}\tau$, $\widehat D_i^2$, 
$\mathsf{a}_0^{k-1}$ and $\mathsf{b}_0^{k-1}$. We conclude from \eqref{e101}, the definitions of $\mathsf{G}_\tau$ and $\mathsf{H}_\tau$, 
and the monotonicity of the sequences $\{\mathsf{a}_0^k,\mathsf{b}_0^k\}$
that \eqref{prop:mean:bound2} holds.
\end{proof}

Next we will give convergence rate results for the original sequence $\{x^k\}$ and for the ergodic average sequences. 
We consider separately the cases of unbounded operators (Assumption \ref{lipschitz}(i))
and the case of bounded ones (Assumption \ref{lipschitz}(ii)), because in the later case sharper rates are possible. 
In the remainder of this subsection, we refer the reader to definitions \eqref{def:B}, \eqref{def:Bk:Ak:C}, 
\eqref{def:S:hatx:weaksharp}-\eqref{def:Z:tildex:weaksharp} and \eqref{def:G:H}-\eqref{def:ak:bk}.

\begin{theorem}[Solvability and feasibility rates of convergence: unbounded case]\label{convergence.rate}
\quad

Suppose that Assumptions \ref{existence}-\ref{weakly.sharp.hyphotesis} and Assumption \ref{lipschitz}(i) hold.
Choose $\tau>1$, $k_0\in\mathbb{N}$ and $\phi\in(0,1)$ such that
\begin{equation}\label{thm:mean:bound}
\sum_{k\ge k_0}\frac{\alpha^2_k}{\beta_k(2-\beta_k)}\le\frac{\phi}{2(1+\tau)L^2}.
\end{equation}
Define for $x^*\in X^*$,	
\begin{equation}\label{thm:E}
\mathsf{E}_k(x^*,k_0,f,g):=f\cdot\left\{\Vert x^0-x^*\Vert^2+
\left[\mathsf{I}(x^*,k_0)L^2+B(x^*)^2\right]\mathsf{H}_\tau\mathsf{a}_0^k
+g\mathsf{G}_\tau\mathsf{C}(x^*)^2\mathsf{b}_0^k\right\},
\end{equation}
\begin{equation}\label{thm:I}
\mathsf{I}(x^*,k_0):=\frac{\max_{0\le k\le k_0}\esp\left[\Vert x^{k}-x^*\Vert^2\right]+
\left[\mathsf{G}_\tau\mathsf{H}_\tau^{-1}\mathsf{C}(x^*)^2L^{-2}+ B(x^*)^2L^{-2}\right]\phi}{1-\phi}.
\end{equation}
Then $\dist(x^k,X^*)$ a.s.-converges to $0$ and the following holds:
\begin{itemize}
\item[a)] For any $\epsilon>0$, there exists $M:=M_\epsilon\in\mathbb{N}$, such that for all $x^*\in X^*$,
$
\esp\left[\dist(x^M,X^*)\right]<\epsilon\le\mathsf{E}_\infty(x^*,k_0,1/2\rho,1)/\mathsf{S}_0^{M-1}.
$
\item[b)] For all $k\in\mathbb{N}$ and all $x^*\in X^*$, 
$
\esp\left[\dist(\widehat x^k,X^*)\right]\le\mathsf{E}_k(x^*,k_0,1/2\rho,1)/\mathsf{S}_0^k.
$
\item[c)] For any $\epsilon>0$, there exists $N:=N_\epsilon\in\mathbb{N}$, such that for all $x^*\in X^*$,
$
\esp\left[\dist(x^N,X)^2\right]<\epsilon\le\mathsf{E}_\infty(x^*,k_0,2\mathsf{G}_\tau,2)/\mathsf{Z}_0^{N-1}.
$
\item[d)] For all $k\in\mathbb{N}$ and all $x^*\in X^*$, 
$
\esp\left[\dist(\widetilde x^k,X)^2\right]\le\mathsf{E}_k(x^*,k_0,2\mathsf{G}_\tau,2)/\mathsf{Z}_0^k.
$
\end{itemize}
\end{theorem}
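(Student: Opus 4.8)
The almost-sure statement that $\dist(x^k,X^*)\to0$ is already Theorem \ref{convergence}, so the real work lies in the four rate bounds. The plan is to telescope the two recursive relations \eqref{lemma:recursion1} and \eqref{lemma:recursion1:feas} after taking total expectations, to use Proposition \ref{prop:boundedness:L1} to absorb the quadratic feedback terms $\alpha_i^2\,\esp[\Vert x^i-x^*\Vert^2]$ that those recursions carry on their right-hand sides, and then to convert the resulting averaged inequalities into (b), (d) by Jensen's inequality applied to the convex combinations $\widehat x^k$, $\widetilde x^k$, and into (a), (c) by an elementary ``largest good index'' contradiction argument. Throughout, the hypothesis \eqref{thm:mean:bound} is precisely what is needed to invoke Proposition \ref{prop:boundedness:L1} with $\gamma=\phi/(\mathsf{H}_\tau L^2)$ (note $\phi\in(0,1)$ forces $\gamma<(\mathsf{H}_\tau L^2)^{-1}$), and I would first record that \eqref{prop:mean:bound1}, together with the trivial bound $\esp[\Vert x^k-x^*\Vert^2]\le\max_{0\le j\le k_0}\esp[\Vert x^j-x^*\Vert^2]$ for $k\le k_0$, gives $\esp[\Vert x^i-x^*\Vert^2]\le\mathsf{I}(x^*,k_0)$ for \emph{every} $i\ge0$; matching the constants here is exactly the content of definition \eqref{thm:I}.

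For the solvability bounds I would take total expectation in \eqref{lemma:recursion1}, telescope from $0$ to $k-1$, move $\sum 2\rho\alpha_i\,\esp[\dist(x^i,X^*)]$ to the left, and estimate the right-hand side using $\esp[\Vert x^i-x^*\Vert^2]\le\mathsf{I}(x^*,k_0)$, the bound $2(1+\mathsf{B}_i\tau)\le\mathsf{H}_\tau$ (since $\mathsf{B}_i=\beta_i(2-\beta_i)\in(0,1]$), and the identity $1/\mathsf{A}_{i,\tau}=\mathsf{G}_\tau/\mathsf{B}_i$; this yields
\[
\sum_{i=0}^{k}\alpha_i\,\esp[\dist(x^i,X^*)]\le\mathsf{E}_{k}\bigl(x^*,k_0,\tfrac{1}{2\rho},1\bigr).
\]
Part (b) is then immediate: $\widehat x^k$ is the convex combination of $x^0,\dots,x^k$ with weights $\alpha_i/\mathsf{S}_0^k$, and $\dist(\cdot,X^*)$ is convex, so Jensen and the displayed bound give $\esp[\dist(\widehat x^k,X^*)]\le\mathsf{E}_k(x^*,k_0,1/(2\rho),1)/\mathsf{S}_0^k$. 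For part (a) I argue by contradiction: if for \emph{every} index $M$ with $\mathsf{S}_0^{M-1}\le\mathsf{E}_\infty(x^*,k_0,1/(2\rho),1)/\epsilon$ one had $\esp[\dist(x^M,X^*)]\ge\epsilon$, then since $\mathsf{S}_0^k\to\infty$ (because $\sum_k\alpha_k=\infty$) there is a largest such index $\bar M$, and for it $\epsilon\,\mathsf{S}_0^{\bar M}\le\sum_{i=0}^{\bar M}\alpha_i\,\esp[\dist(x^i,X^*)]\le\mathsf{E}_{\bar M}(\cdots)\le\mathsf{E}_\infty(\cdots)<\epsilon\,\mathsf{S}_0^{\bar M}$, a contradiction; hence some valid $M$ has $\esp[\dist(x^M,X^*)]<\epsilon$.

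The feasibility bounds (c)--(d) follow the same scheme started from \eqref{lemma:recursion1:feas}. The dissipative term there is $\tfrac{\mathsf{A}_{i,\tau}}{2}\dist(x^i,X)^2=\tfrac{\mathsf{B}_i}{2\mathsf{G}_\tau}\dist(x^i,X)^2$, and the coefficient $2\mathsf{C}(x^*)^2/\mathsf{A}_{i,\tau}$ accounts for the extra factor $g=2$; telescoping and the same estimates give $\sum_{i=0}^{k}\mathsf{B}_i\,\esp[\dist(x^i,X)^2]\le\mathsf{E}_{k}(x^*,k_0,2\mathsf{G}_\tau,2)$. Since $\dist(\cdot,X)^2$ is convex and $\widetilde x^k$ is the convex combination of $x^0,\dots,x^k$ with weights $\mathsf{B}_i/\mathsf{Z}_0^k$, Jensen yields (d), and the ``largest good index'' contradiction gives (c). The only new ingredient needed for (c) is the divergence $\mathsf{Z}_0^k=\sum_{i=0}^k\mathsf{B}_i\to\infty$: if $\sum_k\mathsf{B}_k<\infty$ then Cauchy--Schwarz would give $\sum_k\alpha_k\le\bigl(\sum_k\alpha_k^2/\mathsf{B}_k\bigr)^{1/2}\bigl(\sum_k\mathsf{B}_k\bigr)^{1/2}<\infty$, contradicting Assumption \ref{approx.step}.

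I expect the main obstacle to be purely a matter of careful bookkeeping rather than anything conceptual: one must check that the constants produced by Proposition \ref{prop:boundedness:L1} and by the estimates $2(1+\mathsf{B}_i\tau)\le\mathsf{H}_\tau$, $1/\mathsf{A}_{i,\tau}=\mathsf{G}_\tau/\mathsf{B}_i$ telescope \emph{exactly} into the definitions \eqref{thm:E}--\eqref{thm:I}, including the scalings $f\in\{1/(2\rho),2\mathsf{G}_\tau\}$ and $g\in\{1,2\}$, and one must be attentive to the minor index shifts between the telescoped sum (running to $k-1$) and the averages (running to $k$). The one genuinely non-routine point is establishing that $\mathsf{S}_0^k$ and $\mathsf{Z}_0^k$ both diverge, which is what makes the estimates in (a) and (c) non-vacuous and is the reason the Cauchy--Schwarz remark above is required.
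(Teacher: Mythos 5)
Your proposal is correct and follows essentially the same route as the paper's proof: take total expectations in \eqref{lemma:recursion1} and \eqref{lemma:recursion1:feas}, telescope, control $\sup_{i\ge0}\esp[\Vert x^i-x^*\Vert^2]$ by $\mathsf{I}(x^*,k_0)$ via Proposition \ref{prop:boundedness:L1} with $\gamma=\phi/(2(1+\tau)L^2)$, apply Jensen's inequality to the weighted averages for (b) and (d), and use a first/largest-good-index argument for (a) and (c). Your explicit Cauchy--Schwarz verification that $\mathsf{Z}_0^k=\sum_i\beta_i(2-\beta_i)\to\infty$ (needed for item (c) to be non-vacuous) is a detail the paper leaves implicit, and is a welcome addition.
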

\begin{proof}
Fix $\tau>1$, $k_0\in\mathbb{N}$ and $\phi\in(0,1)$ as in \eqref{thm:mean:bound}. This is possible because 
$\sum_{i\ge k}\alpha_i^2\beta_i^{-1}(2-\beta_i)^{-1}$ converges to $0$ as $k\rightarrow\infty$ by Assumption \ref{approx.step}. 
We now invoke Lemma \ref{lema:recursion}. We take the total expectation in \eqref{lemma:recursion1} and sum 
from $\ell$ to $k$, obtaining, for every $x^*\in X^*$,
\begin{eqnarray}
&&2\rho\sum_{i=\ell}^{k}\alpha_i\esp\left[\dist(x^i,X^*)\right]\nonumber\\
&\le &\esp\left[\Vert x^\ell-x^*\Vert^2\right]+
\sum_{i=\ell}^{k}2\left(1+\mathsf{B}_{i}\tau\right)L^2\alpha_i^2\esp\left[\Vert x^{i}-x^*\Vert^2\right]
+\sum_{i=\ell}^{k}\left[\frac{\mathsf{C}(x^*)^2}{\mathsf{A}_{i,\tau}}
+2\left(1+\mathsf{B}_{i}\tau\right)B(x^*)^2\right]\alpha_i^2\nonumber\\
&\le &\esp\left[\Vert x^\ell-x^*\Vert^2\right]+
\sup_{\ell\le i\le k}\esp\left[\Vert x^{i}-x^*\Vert^2\right]\cdot\sum_{i=\ell}^{k}2\left(1+\mathsf{B}_{i}\tau\right)L^2\alpha_i^2
+\sum_{i=\ell}^{k}\left[\frac{\mathsf{C}(x^*)^2}{\mathsf{A}_{i,\tau}}+2\left(1+\mathsf{B}_{i}\tau\right)B(x^*)^2\right]\alpha_i^2\nonumber\\
&\le &\esp\left[\Vert x^\ell-x^*\Vert^2\right]+
\sup_{i\ge0}\esp\left[\Vert x^{i}-x^*\Vert^2\right]\cdot\mathsf{H}_\tau L^2\mathsf{a}_\ell^k
+\mathsf{G}_\tau\mathsf{C}(x^*)^2\mathsf{b}_\ell^k+\mathsf{H}_\tau B(x^*)^2\mathsf{a}_\ell^k,\label{thm:rate:conv:eq1}
\end{eqnarray}
using $\beta_i(2-\beta_i)\in(0,1]$ and the definitions of $\mathsf{A}_{i,\tau}$, $\mathsf{B}_{i}\tau$, 
$\mathsf{G}_{\tau}$, $\mathsf{H}_{\tau}$, $\mathsf{a}_\ell^k$ and $\mathsf{b}_\ell^k$ in the last inequality. 

We now invoke Proposition \ref{prop:boundedness:L1}. Setting $\gamma:=\frac{\phi}{2(1+\tau)L^2}$, \eqref{prop:mean:bound} 
can be rewritten as \eqref{thm:mean:bound}. From \eqref{prop:mean:bound1} and $1-\mathsf{H}_\tau L^2\in(0,1)$, we get, for all $x^*\in X^*$, 
\begin{equation}\label{thm:rate:conv:eq2}	
\sup_{i\ge0}\esp\left[\Vert x^{i}-x^*\Vert^2\right]\le
\frac{\max_{0\le i\le k_0}\esp\left[\Vert x^{i}-x^*\Vert^2\right]+
\left[\mathsf{G}_\tau\mathsf{C}(x^*)^2+\mathsf{H}_\tau B(x^*)^2\right]\gamma}{1-\mathsf{H}_\tau L^2\gamma}=\mathsf{I}(x^*,k_0),
\end{equation}
using the definitions of $\mathsf{H}_\tau=2(1+\tau)$, $\gamma$ and $\mathsf{I}(x^*,k_0)$.

We prove now item (a). For every $\epsilon>0$, define
\begin{equation}\label{thm:rate:conv:M:def}
M=M_\epsilon:=\inf\{k\in\mathbb{N}:\esp\left[\dist(x^k,X^*)\right]<\epsilon\}.
\end{equation}
From the definition of $M$ we have, for every $k<M$,
\begin{equation}\label{thm:rate:conv:eq3}
2\rho\epsilon\sum_{i=0}^{k}\alpha_i\le2\rho\sum_{i=0}^{k}\alpha_i\esp\left[\dist(x^i,X^*)\right].	
\end{equation}
We claim that $M$ is finite. Indeed, if $M=\infty$, then \eqref{thm:rate:conv:eq1}, \eqref{thm:rate:conv:eq2} 
and \eqref{thm:rate:conv:eq3} hold for $\ell:=0$ and all $k\in\mathbb{N}$. 
Hence, letting $k\rightarrow\infty$ and using that $\mathsf{a}_0^\infty<\infty$ and $\mathsf{b}_0^\infty<\infty$, which
hold by Assumption \ref{approx.step}, we obtain $\sum_k\alpha_k<\infty$, which contradicts Assumption \ref{approx.step}. 
Hence, the set in the right hand side of \eqref{thm:rate:conv:M:def} is nonempty, which implies 
$\esp[\dist(x^M,X^*)]<\epsilon$. Setting $\ell:=0$ and $k:=M-1$ in \eqref{thm:rate:conv:eq1}, \eqref{thm:rate:conv:eq2} 
and \eqref{thm:rate:conv:eq3}, we get for all $x^*\in X^*$,	
$$
\sum_{i=0}^{M-1}\alpha_i\le\frac{\mathsf{E}_{M-1}(x^*,k_0,1/2\rho,1)}{\epsilon}\le
\frac{\mathsf{E}_{\infty}(x^*,k_0,1/2\rho,1)}{\epsilon},
$$
using the definition of $\mathsf{E}_k(x^*,k_0,1/2\rho,1)$. We thus obtain item (a).

We now prove item (b). In view of the convexity of the function $x\longmapsto\dist(x,X^*)$, and the linearity and monotonicity 
of the expected value, we have 
\begin{equation}\label{thm:rate:conv:eq5}
\esp\left[\dist(\widehat x_\ell^k,X^*)\right]=
\esp\left[\dist\left(\frac{\sum_{i=\ell}^k\alpha_ix^i}{\sum_{i=\ell}^k\alpha_i},X^*\right)\right]\le
\frac{\sum_{i=\ell}^k\alpha_i\esp\left[\dist(x^i,X^*)\right]}{\sum_{i=\ell}^k\alpha_i}.
\end{equation}
Set $\ell:=0$, divide \eqref{thm:rate:conv:eq1} by 
$2\rho\sum_{i=0}^k\alpha_i=2\rho\mathsf{S}_0^k$ and use \eqref{thm:rate:conv:eq5}, the definition of $\mathsf{E}_k(x^*,k_0,1/2\rho,1)$ 
together with \eqref{thm:rate:conv:eq2}, in order to bound $\sup_{i\ge0}\esp[\Vert x^i-x^*\Vert^2]$, and obtain item (b) as a consequence.

The proofs of items (c) and (d) follow the proofline above, using \eqref{lemma:recursion1:feas} instead of \eqref{lemma:recursion1}.
\end{proof}

\begin{corollary}[Solvability and feasibility rates with robust stepsizes: unbounded case]
\label{cor:rate:conv}
Assume that the hypotheses of Theorem \ref{convergence.rate} hold. 
Given $\theta>0$ and $\lambda>0$, define $\{\alpha_k\}$ as: $\alpha_0=\alpha_1=\theta$ and for $k\ge2$,
\begin{equation}\label{cor:rate:conv:step}
\alpha_k:=\frac{\theta}{\sqrt{k\left(\ln k\right)^{1+\lambda}}},
\end{equation} 
and choose $\beta_k\equiv\beta\in(0,2)$, $\tau>1$ and $\phi\in(0,1)$. Take $k_0\ge2$ as the minimum natural number such that
\begin{equation}\label{cor:rate:conv:k0}
k_0\ge\exp\left[\left(\frac{2(1+\tau)L^2\theta^2}{\lambda\beta(2-\beta)\phi}\right)^{1/\lambda}\right]+1.
\end{equation}
Define
$$
\mathsf{J}_\beta(x^*,k_0,g):=\left[\mathsf{I}(x^*,k_0)L^2+B(x^*)^2\right]\mathsf{H}_\tau+
g\mathsf{G}_\tau\mathsf{C}(x^*)^2\beta^{-1}(2-\beta)^{-1},\forall x\in X^*,
$$
$$
\mathsf{Q}_{\beta,\lambda}(x^0,k_0,g):=\inf_{x^*\in X^*}\left\{\Vert x^0-x^*\Vert^2+
\mathsf{J}_\beta(x^*,k_0,g)\left[2+\frac{1}{2(\ln 2)^{1+\lambda}}+\frac{1}{\lambda(\ln 2)^\lambda}\right]\right\}.
$$

Then $\dist(x^k,X^*)$ a.s.-converges to $0$ and the following holds:
\begin{itemize}
\item[a)] For every $\epsilon>0$, there exists $M=M_\epsilon\ge2$ such that
$$
\esp\left[\dist(x^M,X^*)\right]<\epsilon\le 
\frac{\max\{\theta,\theta^{-1}\}}{2\rho}\cdot\frac{\left[\ln (M-1)\right]^{\frac{1+\lambda}{2}}}{\sqrt{M}}\mathsf{Q}_{\beta,\lambda}(x^0,k_0,1).
$$	
\item[b)] For all $k\ge2$,
$$
\esp\left[\dist(\widehat x^k,X^*)\right]\le
\frac{\max\{\theta,\theta^{-1}\}}{2\rho}\cdot\frac{(\ln k)^{\frac{1+\lambda}{2}}}{\sqrt{k+1}}\mathsf{Q}_{\beta,\lambda}(x^0,k_0,1).
$$
\item[c)] For every $\epsilon>0$, there exists $N=N_\epsilon\in\mathbb{N}$ such that
$$
\esp\left[\dist(x^N,X)^2\right]<\epsilon\le
2\mathsf{G}_\tau\frac{\max\{1,\theta^2\}}{\beta(2-\beta)}\cdot\frac{\mathsf{Q}_{\beta,\lambda}(x^0,k_0,2)}{N}.
$$	
\item[d)] For all $k\in\mathbb{N}_0$,
$$
\esp\left[\dist(\widetilde x^k,X)^2\right]\le
2\mathsf{G}_\tau\frac{\max\{1,\theta^2\}}{\beta(2-\beta)}\cdot\frac{\mathsf{Q}_{\beta,\lambda}(x^0,k_0,2)}{k+1}.
$$
\end{itemize}
\end{corollary}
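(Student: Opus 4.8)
The plan is to specialize Theorem \ref{convergence.rate} to the prescribed parameters and then to make the abstract quantities $\mathsf{S}_0^k$, $\mathsf{Z}_0^k$, $\mathsf{a}_0^k$, $\mathsf{b}_0^k$ that appear in its statement explicit under the choice \eqref{cor:rate:conv:step}. First I would check that $\{\alpha_k\}$ and $\beta_k\equiv\beta$ satisfy Assumption \ref{approx.step}. Since $\mathsf{B}_k=\beta_k(2-\beta_k)=\beta(2-\beta)$ is constant, the three series conditions reduce to the integral test for the Bertrand-type series $\sum_{k\ge2}\bigl(k(\ln k)^{1+\lambda}\bigr)^{-1}$, which converges because $1+\lambda>1$, and $\sum_{k\ge2}\bigl(k(\ln k)^{1+\lambda}\bigr)^{-1/2}$, which diverges; hence $\sum_k\alpha_k^2<\infty$, $\sum_k\alpha_k^2/\bigl(\beta(2-\beta)\bigr)<\infty$ and $\sum_k\alpha_k=\infty$. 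Next I would verify that \eqref{cor:rate:conv:k0} forces \eqref{thm:mean:bound}: the integral comparison $\sum_{k\ge k_0}\bigl(k(\ln k)^{1+\lambda}\bigr)^{-1}\le\int_{k_0-1}^{\infty}\bigl(x(\ln x)^{1+\lambda}\bigr)^{-1}dx=\lambda^{-1}(\ln(k_0-1))^{-\lambda}$ together with the constant value of $\beta_k(2-\beta_k)$ bounds the left side of \eqref{thm:mean:bound} by $\theta^2\bigl(\beta(2-\beta)\lambda\bigr)^{-1}(\ln(k_0-1))^{-\lambda}$, and this is $\le\phi/\bigl(2(1+\tau)L^2\bigr)$ precisely when $k_0$ satisfies \eqref{cor:rate:conv:k0}. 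Thus Theorem \ref{convergence.rate} applies and, in particular, $\dist(x^k,X^*)\to0$ a.s.

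The second block of work is the explicit estimation of the series. Because $\mathsf{B}_k$ is constant one has $\mathsf{b}_\ell^k=\mathsf{a}_\ell^k/\bigl(\beta(2-\beta)\bigr)$ and $\mathsf{Z}_\ell^k=(k-\ell+1)\beta(2-\beta)$, hence $\mathsf{Z}_0^k=(k+1)\beta(2-\beta)$ and $\mathsf{Z}_0^{N-1}=N\beta(2-\beta)$. For $\mathsf{a}_0^\infty=\sum_i\alpha_i^2$ I would split off $\alpha_0^2+\alpha_1^2=2\theta^2$ and the term $i=2$, bound the remaining tail by $\theta^2\int_2^{\infty}\bigl(x(\ln x)^{1+\lambda}\bigr)^{-1}dx=\theta^2\lambda^{-1}(\ln2)^{-\lambda}$, and conclude $\mathsf{a}_0^k\le\mathsf{a}_0^\infty\le\theta^2\bigl[2+(2(\ln2)^{1+\lambda})^{-1}+(\lambda(\ln2)^\lambda)^{-1}\bigr]$ for every $k$; this is exactly the bracketed constant appearing in $\mathsf{Q}_{\beta,\lambda}$. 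For the lower bound on $\mathsf{S}_0^k=\sum_{i=0}^k\alpha_i$ I would use $(\ln i)^{1+\lambda}\le(\ln k)^{1+\lambda}$ for $2\le i\le k$ to get $\alpha_i\ge\theta\,i^{-1/2}(\ln k)^{-(1+\lambda)/2}$, add the fixed contribution $\alpha_0+\alpha_1=2\theta$, and invoke $\sum_{i=2}^{k}i^{-1/2}\ge\int_2^{k+1}x^{-1/2}dx$ to obtain $\mathsf{S}_0^k\ge\theta\sqrt{k+1}\,(\ln k)^{-(1+\lambda)/2}$ for $k\ge2$.

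Finally I would assemble the four items. Substituting $\mathsf{b}_0^k=\mathsf{a}_0^k/\bigl(\beta(2-\beta)\bigr)$ into \eqref{thm:E} and using $\mathsf{H}_\tau=2(1+\tau)$ turns the bracket there into $\mathsf{J}_\beta(x^*,k_0,g)\,\mathsf{a}_0^k$; bounding $\mathsf{a}_0^k\le\mathsf{a}_0^\infty$, factoring $\theta^2$ out through $\max\{1,\theta^2\}$ (so as to put it on the same footing as the $\Vert x^0-x^*\Vert^2$ term), and taking the infimum over $x^*\in X^*$ gives $\mathsf{E}_k(x^*,k_0,f,g)\le f\max\{1,\theta^2\}\,\mathsf{Q}_{\beta,\lambda}(x^0,k_0,g)$ for all $x^*\in X^*$. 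Items (a) and (b) then follow from Theorem \ref{convergence.rate}(a),(b) with $f=1/2\rho$, $g=1$ by dividing by $\mathsf{S}_0^{M-1}\ge\theta\sqrt{M}(\ln(M-1))^{-(1+\lambda)/2}$ and $\mathsf{S}_0^{k}\ge\theta\sqrt{k+1}(\ln k)^{-(1+\lambda)/2}$ respectively, and using $\max\{1,\theta^2\}/\theta=\max\{\theta,\theta^{-1}\}$; items (c) and (d) follow from Theorem \ref{convergence.rate}(c),(d) with $f=2\mathsf{G}_\tau$, $g=2$ by dividing by $\mathsf{Z}_0^{N-1}=N\beta(2-\beta)$ and $\mathsf{Z}_0^{k}=(k+1)\beta(2-\beta)$. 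I expect the only genuinely delicate point to be the lower bound on $\mathsf{S}_0^k$: to recover precisely the factor $\sqrt{k+1}/(\ln k)^{(1+\lambda)/2}$ with no stray constants one must apply the $(\ln i)\le(\ln k)$ estimate before the integral comparison and let the fixed term $\alpha_0+\alpha_1=2\theta$ absorb the discrepancy for the smallest values of $k$; everything else is routine bookkeeping with the integral test for Bertrand series.
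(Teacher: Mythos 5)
Your proposal follows essentially the same route as the paper's proof: verify \eqref{thm:mean:bound} via the integral comparison $\sum_{k\ge k_0}\alpha_k^2\le\theta^2\lambda^{-1}(\ln(k_0-1))^{-\lambda}$, bound $\mathsf{a}_0^\infty$ by the same split-plus-integral estimate to produce the bracketed constant in $\mathsf{Q}_{\beta,\lambda}$, lower-bound $\mathsf{S}_0^k$ by $\theta\sqrt{k+1}(\ln k)^{-(1+\lambda)/2}$, use $\mathsf{Z}_0^k=(k+1)\beta(2-\beta)$ and $\mathsf{a}_\ell^k=\beta(2-\beta)\mathsf{b}_\ell^k$, and then plug into Theorem \ref{convergence.rate}(a)--(d). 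The only cosmetic difference is that you bound $\mathsf{S}_0^k$ via $\sum_{i=2}^k i^{-1/2}\ge\int_2^{k+1}x^{-1/2}\,dx$ while the paper uses $(k-\ell+1)$ times the smallest stepsize, both landing on the same estimate; your explicit check of Assumption \ref{approx.step} is a small bonus the paper leaves implicit.
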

\begin{proof}
We estimate $k_0$ in \eqref{thm:I}. Since
$$
\sum_{k\ge k_0}\alpha_k^2 <
\theta^2\int_{k_0-1}^{\infty}t^{-1}(\ln t)^{-(1+\lambda)}\dist t\nonumber\\
=\frac{\theta^2}{\lambda\left[\ln(k_0-1)\right]^\lambda},
$$
we conclude from \eqref{thm:mean:bound} that it is enough to choose the minimum $k_0\ge2$ such that
$$
\frac{\theta^2}{\lambda\left[\ln(k_0-1)\right]^\lambda}\le\frac{\beta(2-\beta)\phi}{2(1+\tau)L^2},
$$
that is to say, the minimum $k_0\ge2$ such that \eqref{cor:rate:conv:k0} holds.

Let $k\ge2$. We first estimate the sum of the stepsize sequence. For any $0\le\ell\le k$ we have
\begin{equation}\label{cor:rate:conv:eq1}	
\mathsf{S}_\ell^k=\sum_{i=\ell}^{k}\alpha_i 
\ge\frac{\theta(k-\ell+1)}{\sqrt{k(\ln k)^{1+\lambda}}},
\end{equation}
using the fact that the minimum stepsize between $\ell$ and $k\ge2$ is 
$\theta k^{-\frac{1}{2}}(\ln k)^{\frac{1+\lambda}{2}}$. The sum of the squares of the stepsizes sequence can be estimated as
\begin{eqnarray}
\mathsf{a}_0^k&\le &\mathsf{a}_0^\infty=\sum_{i=0}^{\infty}\alpha_i^2 =
2\theta^2+\frac{\theta^2}{2(\ln 2)^{1+\lambda}}+\sum_{i=3}^\infty\frac{\theta^2}{i(\ln i)^{1+\lambda}}\nonumber\\
&\le &2\theta^2+\frac{\theta^2}{2(\ln 2)^{1+\lambda}}+\theta^2\int_{2}^{\infty}t^{-1}(\ln t)^{-(1+\lambda)}\dist t
=\theta^2\left[2+\frac{1}{2(\ln 2)^{1+\lambda}}+\frac{1}{\lambda(\ln 2)^\lambda}\right].\label{cor:rate:conv:eq1:2}
\end{eqnarray}

We assume without loss on generality that 
we have $M\ge2$ 
in \eqref{thm:rate:conv:M:def}. 
Item (a) follows from \eqref{cor:rate:conv:eq1} with $k:=M-1$ and $\ell:=0$, \eqref{cor:rate:conv:eq1:2}, 
Theorem \ref{convergence.rate}(a) and the definitions of $\mathsf{J}_\beta(x^*,k_0,1)$, 
$\mathsf{E}_\infty(x^*,k_0,1/2\rho,1)$ and $\mathsf{a}_0^\infty=\beta(2-\beta)\mathsf{b}_0^\infty$.

Similarly, item (b) follows from \eqref{cor:rate:conv:eq1}-\eqref{cor:rate:conv:eq1:2} with $\ell:=0$, Theorem \ref{convergence.rate}(b) and 
the definitions of $\mathsf{J}_\beta(x^*,k_0,1)$ and $\mathsf{E}_k(x^*,k_0,1/2\rho,1)$ and the facts 
that $\mathsf{b}^k_0\le\mathsf{b}_0^\infty$ and	 $\mathsf{a}_0^\infty=\beta(2-\beta)\mathsf{b}_0^\infty$.

The proof of items (c) and (d) follows a similar proofline, using Theorem \ref{convergence.rate}(c)-(d) 
and the fact that $\mathsf{Z}_0^k=\beta(2-\beta)(k+1)$.
\end{proof}

Next we give convergence rates for the bounded case. For simplicity we just state the rates for the ergodic averages, but we 
note that similar rates can be derived for $x^k$ as in Theorem \ref{convergence.rate} and Corollary \ref{cor:rate:conv}.

\begin{theorem}[Solvability and feasibility rates: bounded case]\label{thm:rate:conv:bounded}
Suppose that Assumptions \ref{existence}-\ref{weakly.sharp.hyphotesis} and Assumption \ref{lipschitz}(ii) hold. 
Choose $\tau>1$. Define for $\ell\le k$ in $\mathbb{N}_0\cup\{\infty\}$,
$$
\mathsf{E}_\ell^k[R,f,g]:=f\left\{R^2+g
\mathsf{G}_\tau\Big(\rho+\sqrt{2C_F}\Big)^2\mathsf{b}_\ell^k+\mathsf{H}_\tau C_F^2\mathsf{a}_\ell^k\right\}.
$$
Then, $\dist(x^k,X^*)$ a.s.-converges to zero and 
\begin{itemize}
\item[a)] For all $k\in\mathbb{N}$, 
$
\esp\left[\dist(\widehat x^k,X^*)\right]\le
\mathsf{E}_0^k[\dist(x^0,X^*),1/2\rho,1]/\mathsf{S}_0^k.
$
\item[b)] If $X_0$ is compact, then for all $\ell,k\in\mathbb{N}$ with $\ell<k$, 
$
\esp\left[\dist(\widehat x_\ell^k,X^*)\right]\le
\mathsf{E}_\ell^k[\diam(X_0),1/2\rho,1]/\mathsf{S}_\ell^k.
$
\item[c)] for all $k\in\mathbb{N}$, 
$
\esp\left[\dist(\widetilde x^k,X)^2\right]\le
\mathsf{E}_0^k[\dist(x^0,X^*),2\mathsf{G}_\tau,2]/\mathsf{Z}_0^k.
$
\item[d)] If $X_0$ is compact, then for all $\ell,k\in\mathbb{N}$ with $\ell<k$, 
$
\esp\left[\dist(\widetilde x_\ell^k,X)^2\right]\le
\mathsf{E}_\ell^k[\diam(X_0),2\mathsf{G}_\tau,2]/\mathsf{Z}_\ell^k.
$
\end{itemize}
\end{theorem}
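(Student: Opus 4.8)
The plan is to derive the four ergodic bounds directly from the two recursive estimates \eqref{lemma:recursion2} and \eqref{lemma:recursion2:feas} of Lemma \ref{lema:recursion}, which hold under Assumption \ref{lipschitz}(ii). The structural feature I would exploit is that, unlike \eqref{lemma:recursion1}--\eqref{lemma:recursion1:feas}, these two recursions carry \emph{no} multiplicative factor on $\dist(x^k,X^*)^2$; hence the sums telescope cleanly and no Gronwall-type boundedness argument (Proposition \ref{prop:boundedness:L1}) is needed, which is precisely why sharper rates are available here. The a.s.\ convergence $\dist(x^k,X^*)\to0$ is already supplied by Theorem \ref{convergence}, so only items (a)--(d) remain.

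For items (a) and (b): fix $x^*\in X^*$ (nonempty by Assumption \ref{existence}), take total expectation in \eqref{lemma:recursion2}, and sum from $i=\ell$ to $i=k$. Since the $\dist(x^{i+1},X^*)^2$ and $\dist(x^i,X^*)^2$ terms telescope with coefficient $1$, discarding the nonnegative left-over $\esp[\dist(x^{k+1},X^*)^2]$ I obtain
\[
2\rho\sum_{i=\ell}^{k}\alpha_i\,\esp[\dist(x^i,X^*)]\le\esp[\dist(x^\ell,X^*)^2]+\sum_{i=\ell}^{k}\left[\frac{(\rho+\sqrt{2C_F})^2}{\mathsf{A}_{i,\tau}}+2(1+\mathsf{B}_i\tau)C_F^2\right]\alpha_i^2 .
\]
Using $\mathsf{A}_{i,\tau}=\mathsf{B}_i(\tau-1)/(cC_g^2\tau)$, so that $\alpha_i^2/\mathsf{A}_{i,\tau}=\mathsf{G}_\tau\,\alpha_i^2/(\beta_i(2-\beta_i))$, together with $0<\mathsf{B}_i\tau\le\tau$ (as $\beta_i\in(0,2)$) and the definitions of $\mathsf{G}_\tau,\mathsf{H}_\tau,\mathsf{a}_\ell^k,\mathsf{b}_\ell^k$, the error sum is bounded by $\mathsf{G}_\tau(\rho+\sqrt{2C_F})^2\mathsf{b}_\ell^k+\mathsf{H}_\tau C_F^2\mathsf{a}_\ell^k$. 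Dividing by $2\rho\,\mathsf{S}_\ell^k$ and applying the convexity of $x\mapsto\dist(x,X^*)$ with Jensen's inequality exactly as in \eqref{thm:rate:conv:eq5} yields $\esp[\dist(\widehat x_\ell^k,X^*)]\le\mathsf{E}_\ell^k[R,1/2\rho,1]/\mathsf{S}_\ell^k$ with $R^2=\esp[\dist(x^\ell,X^*)^2]$. Taking $\ell=0$ and $\esp[\dist(x^0,X^*)^2]=\dist(x^0,X^*)^2$ gives (a); for (b) I take $\ell\ge1$ and use that $x^\ell=\Pi_{X_0}[\cdot]\in X_0$ while $X^*\subseteq X\subseteq X_0$, so $\dist(x^\ell,X^*)\le\diam(X_0)$ when $X_0$ is compact, giving $R=\diam(X_0)$.

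Items (c) and (d) follow the same pattern with \eqref{lemma:recursion2:feas} replacing \eqref{lemma:recursion2}. Telescoping from $i=\ell$ to $i=k$ gives, after the same simplification of the error sum,
\[
\tfrac12\sum_{i=\ell}^{k}\mathsf{A}_{i,\tau}\,\esp[\dist(x^i,X)^2]\le\esp[\dist(x^\ell,X^*)^2]+2(\rho+\sqrt{2C_F})^2\mathsf{G}_\tau\mathsf{b}_\ell^k+\mathsf{H}_\tau C_F^2\mathsf{a}_\ell^k .
\]
Since $\mathsf{A}_{i,\tau}=\mathsf{B}_i/\mathsf{G}_\tau$ with $\mathsf{B}_i=\beta_i(2-\beta_i)$, the left-hand side equals $(2\mathsf{G}_\tau)^{-1}\sum_{i=\ell}^{k}\mathsf{B}_i\,\esp[\dist(x^i,X)^2]$; invoking the convexity of $x\mapsto\dist(x,X)^2$ and Jensen's inequality on the $\mathsf{B}_i$-weighted convex combination $\widetilde x_\ell^k$ bounds $\mathsf{Z}_\ell^k\,\esp[\dist(\widetilde x_\ell^k,X)^2]$ above by $\sum_{i=\ell}^{k}\mathsf{B}_i\,\esp[\dist(x^i,X)^2]$. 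Rearranging and multiplying through by $2\mathsf{G}_\tau/\mathsf{Z}_\ell^k$ yields $\esp[\dist(\widetilde x_\ell^k,X)^2]\le\mathsf{E}_\ell^k[R,2\mathsf{G}_\tau,2]/\mathsf{Z}_\ell^k$; I then specialize $\ell=0$, $R=\dist(x^0,X^*)$ for (c) and $\ell\ge1$, $R=\diam(X_0)$ for (d), exactly as above.

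I do not expect a substantive obstacle: once Lemma \ref{lema:recursion} is available the computation is bookkeeping. The two points needing a little attention are (i) recognizing that the bounded-operator recursions telescope without any amplification factor, which is what upgrades the rates to $O(1/\mathsf{S}_0^k)$ and $O(1/\mathsf{Z}_0^k)$ and removes any dependence on Proposition \ref{prop:boundedness:L1}, and (ii) the elementary but essential remark that every iterate $x^\ell$ with $\ell\ge1$ lies in $X_0$, so compactness of $X_0$ furnishes the uniform bound $\diam(X_0)^2$ on $\esp[\dist(x^\ell,X^*)^2]$ needed for the window-based averages in (b) and (d); the residual care is just matching constants to the definition of $\mathsf{E}_\ell^k[\cdot,\cdot,\cdot]$.
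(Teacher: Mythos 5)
Your proposal is correct and follows essentially the same route as the paper: take total expectation in \eqref{lemma:recursion2} (resp. \eqref{lemma:recursion2:feas}), telescope from $\ell$ to $k$, bound the error sum via the definitions of $\mathsf{A}_{i,\tau}$, $\mathsf{G}_\tau$, $\mathsf{H}_\tau$, and finish with Jensen's inequality on the weighted averages, using $\esp[\dist(x^\ell,X^*)^2]\le\diam(X_0)^2$ for the windowed items. Your observations that no Proposition \ref{prop:boundedness:L1} is needed and that $x^\ell\in X_0$ for $\ell\ge1$ match the remarks the paper itself makes.
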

\begin{proof}
Fix $\tau>1$. We will invoke Lemma \ref{lema:recursion}. We take the total expectation in \eqref{lemma:recursion2} and sum 
from $\ell$ to $k$, obtaining
\begin{eqnarray}
2\rho\sum_{i=\ell}^{k}\alpha_i\esp\left[\dist(x^i,X^*)\right]&\le &
\esp\left[\dist(x^\ell,X^*)^2\right]+
\sum_{i=\ell}^{k}\left[\frac{\left(\rho+\sqrt{2C_F}\right)^2}{\mathsf{A}_{i,\tau}}+
2\left(1+\mathsf{B}_{i}\tau\right)C_F^2\right]\alpha_i^2\nonumber\\
&\le &\esp\left[\dist(x^\ell,X^*)^2\right]+
\mathsf{G}_\tau\left(\rho+\sqrt{2C_F}\right)^2\mathsf{b}_\ell^k+\mathsf{H}_\tau C_F^2\mathsf{a}_\ell^k,	\label{cor:rate:conv:bounded:eq1}
\end{eqnarray}
using the fact that $\beta_i(2-\beta_i)\in(0,1]$ and the definitions of $\mathsf{A}_{i,\tau}$, $\mathsf{B}_{i}\tau$, 
$\mathsf{G}_{\tau}$, $\mathsf{H}_{\tau}$, $\mathsf{a}_\ell^k$ and $\mathsf{b}_\ell^k$ in last inequality. 
From \eqref{cor:rate:conv:bounded:eq1} on, the proofs of items (a)-(b) are similar to the proof of Theorem \ref{convergence.rate}. 
We omit the details, but make the following remarks: differently to the proofs of items (a)-(b) in Theorem \ref{convergence.rate}, 
the proofs of items (a)-(b) of Theorem \ref{thm:rate:conv:bounded} do not require Proposition \ref{prop:boundedness:L1}. 
In the proof of item (b), we use the bound $\esp[\dist(x^\ell,X^*)^2]\le\diam(X_0)^2$ in \eqref{cor:rate:conv:bounded:eq1}. 
The proofs of items (c)-(d) follow  a similar proofline, using \eqref{lemma:recursion2:feas}.
\end{proof}

\begin{corollary}[Solvability and feasibility rates with robust stepsizes: bounded case]\label{cor:rate:conv:bounded}
Assume that the hypotheses of Theorem \ref{thm:rate:conv:bounded} hold. 
Given $\theta>0$ and $\lambda>0$, define $\{\alpha_k\}$ as: $\alpha_0=\alpha_1=\theta$ and for $k\ge2$,
\begin{equation}\label{cor:rate:conv:step:bounded}
\alpha_k:=\frac{\theta}{\sqrt{k\left(\ln k\right)^{1+\lambda}}},
\end{equation} 
and choose $\beta_k\equiv\beta\in(0,2)$, $\tau>1$. Define
\begin{eqnarray*}
\mathsf{\widehat J}_\beta[g]&:=&\mathsf{H}_\tau C_F^2+g\mathsf{G}_\tau\left(\rho+\sqrt{2C_F}\right)^2\beta^{-1}(2-\beta)^{-1},\\
\mathsf{\widehat Q}_{\beta,\lambda}[x^0,g]&:=&\dist(x^0,X^*)^2+
\mathsf{\widehat J}_\beta[g]\left[2+\frac{1}{2(\ln 2)^{1+\lambda}}+\frac{1}{\lambda(\ln 2)^\lambda}\right].
\end{eqnarray*}
Then $\dist(x^k,X^*)$ a.s.-converges to $0$ and
\begin{itemize}
\item[a)] for all $k\ge2$,
$$
\esp\left[\dist(\widehat x^k,X^*)\right]\le
\frac{\max\{\theta,\theta^{-1}\}}{2\rho}\cdot\frac{(\ln k)^{\frac{1+\lambda}{2}}}{\sqrt{k+1}}\mathsf{\widehat Q}_{\beta,\lambda}[x^0,1],
$$
\item[b)] if $X_0$ is compact, then given $r\in(0,1)$, for all $k\ge2r^{-1}$, it holds that
$$
\esp\left[\dist(\widehat x_{\lceil rk\rceil}^k,X^*)\right]\le
\frac{\max\{\theta,\theta^{-1}\}}{2\rho}\cdot\frac{(\ln k)^{\frac{1+\lambda}{2}}}{\sqrt{k}}
\cdot\left\{(1-r)^{-1}\diam(X_0)^2+\frac{r^{-1}\mathsf{\widehat J}_\beta[1]}{\left[\ln k-\ln(r^{-1})\right]^{1+\lambda}}\right\}.
$$ 
\item[c)] For all $k\in\mathbb{N}_0$,
$$
\esp\left[\dist(\widetilde x^k,X)^2\right]\le2\mathsf{G}_\tau
\frac{\max\{1,\theta^2\}}{\beta(2-\beta)}\cdot\frac{\mathsf{\widehat Q}_{\beta,\lambda}[x^0,2]}{k+1}.
$$
\end{itemize}
\end{corollary}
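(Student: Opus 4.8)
The statement is the bounded-case twin of Corollary \ref{cor:rate:conv}, so the plan is to substitute the schedule \eqref{cor:rate:conv:step:bounded} into Theorem \ref{thm:rate:conv:bounded} and simplify, rather than redo any recursion. First I note that for $\lambda>0$ this schedule automatically satisfies Assumption \ref{approx.step} (since $\sum 1/[k(\ln k)^{1+\lambda}]<\infty$ while $\sum 1/[\sqrt{k}(\ln k)^{(1+\lambda)/2}]=\infty$), so — in contrast with Corollary \ref{cor:rate:conv} — no auxiliary threshold $k_0$ is needed here, because the bounded-case Theorem \ref{thm:rate:conv:bounded} never invokes Proposition \ref{prop:boundedness:L1}. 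Since $\beta_k\equiv\beta$, we have the identities $\mathsf{b}_\ell^k=\mathsf{a}_\ell^k/[\beta(2-\beta)]$ and $\mathsf{Z}_\ell^k=\beta(2-\beta)(k-\ell+1)$, and the mixed term in $\mathsf{E}_\ell^k[R,f,g]$ collapses, $g\mathsf{G}_\tau(\rho+\sqrt{2C_F})^2\mathsf{b}_\ell^k+\mathsf{H}_\tau C_F^2\mathsf{a}_\ell^k=\mathsf{a}_\ell^k\,\mathsf{\widehat J}_\beta[g]$. The two elementary facts I will reuse verbatim from the proof of Corollary \ref{cor:rate:conv} are the lower bound $\mathsf{S}_\ell^k\ge\theta(k-\ell+1)/\sqrt{k(\ln k)^{1+\lambda}}$ from \eqref{cor:rate:conv:eq1} and the uniform bound $\mathsf{a}_0^k\le\mathsf{a}_0^\infty=\theta^2\bigl[2+\tfrac{1}{2(\ln 2)^{1+\lambda}}+\tfrac{1}{\lambda(\ln 2)^\lambda}\bigr]$ from \eqref{cor:rate:conv:eq1:2}.

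For item (a) I start from Theorem \ref{thm:rate:conv:bounded}(a); using $\mathsf{a}_0^k\le\mathsf{a}_0^\infty$ together with the collapse above turns $\mathsf{E}_0^k[\dist(x^0,X^*),1/2\rho,1]$ into $\tfrac{1}{2\rho}\bigl(\dist(x^0,X^*)^2+\theta^2\,\mathsf{\widehat J}_\beta[1]\bigl[2+\tfrac{1}{2(\ln 2)^{1+\lambda}}+\tfrac{1}{\lambda(\ln 2)^\lambda}\bigr]\bigr)$, which is at most $\tfrac{\max\{1,\theta^2\}}{2\rho}\mathsf{\widehat Q}_{\beta,\lambda}[x^0,1]$. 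Dividing by $\mathsf{S}_0^k\ge\theta(k+1)/[\sqrt{k}(\ln k)^{(1+\lambda)/2}]$, using $\sqrt{k}/(k+1)\le 1/\sqrt{k+1}$ and the identity $\max\{1,\theta^2\}/\theta=\max\{\theta,\theta^{-1}\}$, yields the stated bound. Item (c) is the same computation applied to Theorem \ref{thm:rate:conv:bounded}(c): the collapse gives $\mathsf{a}_0^k\,\mathsf{\widehat J}_\beta[2]$, bounding $\mathsf{a}_0^k\le\mathsf{a}_0^\infty$ produces the factor $\mathsf{\widehat Q}_{\beta,\lambda}[x^0,2]$, and $\mathsf{Z}_0^k=\beta(2-\beta)(k+1)$ supplies the denominator, so the bound holds for every $k\in\mathbb{N}_0$.

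Item (b) needs a sharper window estimate. I set $\ell:=\lceil rk\rceil$, which is $\ge 2$ and $\le k$ once $k\ge 2r^{-1}$, so the decreasing-tail formula for $\alpha_i$ applies on the window $[\ell,k]$, and I split the bound of Theorem \ref{thm:rate:conv:bounded}(b) as $\esp[\dist(\widehat x_\ell^k,X^*)]\le\tfrac{1}{2\rho}\bigl(\diam(X_0)^2/\mathsf{S}_\ell^k+(\mathsf{a}_\ell^k/\mathsf{S}_\ell^k)\,\mathsf{\widehat J}_\beta[1]\bigr)$. For the first summand, $k-\ell+1\ge k(1-r)$ in \eqref{cor:rate:conv:eq1} gives $\mathsf{S}_\ell^k\ge\theta(1-r)\sqrt{k}/(\ln k)^{(1+\lambda)/2}$. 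For the second, $\mathsf{a}_\ell^k$ has $k-\ell+1$ terms each at most $\alpha_\ell^2$ while $\mathsf{S}_\ell^k$ has $k-\ell+1$ terms each at least $\alpha_k$, so the counts cancel and $\mathsf{a}_\ell^k/\mathsf{S}_\ell^k\le\alpha_\ell^2/\alpha_k$; inserting the schedule and $\ell\ge rk$ (hence $\ln\ell\ge\ln k-\ln r^{-1}>0$) gives $\alpha_\ell^2/\alpha_k\le\theta r^{-1}(\ln k)^{(1+\lambda)/2}/\bigl[\sqrt{k}(\ln k-\ln r^{-1})^{1+\lambda}\bigr]$. Factoring out $(\ln k)^{(1+\lambda)/2}/(2\rho\sqrt{k})$ and bounding the surviving $\theta^{-1}$ and $\theta$ each by $\max\{\theta,\theta^{-1}\}$ gives exactly the claimed inequality.

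The only genuine obstacle — and it is minor — lies in the bookkeeping of (b): one must keep the $\diam(X_0)^2$ and $\mathsf{\widehat J}_\beta[1]$ contributions separate so that $(1-r)^{-1}$ multiplies only the former, and one must use the term-count cancellation $\mathsf{a}_\ell^k/\mathsf{S}_\ell^k\le\alpha_\ell^2/\alpha_k$ rather than a crude integral bound on $\mathsf{a}_\ell^k$ (the latter would degrade the logarithmic exponent from $1+\lambda$ to $\lambda$). Everything else is routine substitution of \eqref{cor:rate:conv:step:bounded} into the already-established bounds of Theorem \ref{thm:rate:conv:bounded}; the a.s. convergence $\dist(x^k,X^*)\to 0$ is quoted directly from that theorem.
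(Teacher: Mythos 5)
Your proposal is correct and follows essentially the same route as the paper: items (a) and (c) are the substitution of the schedule into Theorem \ref{thm:rate:conv:bounded}(a),(c) using the bounds \eqref{cor:rate:conv:eq1}--\eqref{cor:rate:conv:eq1:2} and the identity $\mathsf{a}_0^\infty=\beta(2-\beta)\mathsf{b}_0^\infty$, and your windowed estimate $\mathsf{a}_\ell^k/\mathsf{S}_\ell^k\le\alpha_\ell^2/\alpha_k$ for item (b) is exactly the term-count cancellation the paper carries out in \eqref{cor:rate:conv:eq2}--\eqref{cor:rate:conv:eq4}. Your observation that no threshold $k_0$ is needed because the bounded case bypasses Proposition \ref{prop:boundedness:L1} is also consistent with the paper.
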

\begin{proof}

Item (a) follows from \eqref{cor:rate:conv:eq1}-\eqref{cor:rate:conv:eq1:2} with $\ell:=0$, 
Theorem \ref{thm:rate:conv:bounded}(a), the definition of $\mathsf{\widehat J}_\beta[1]$, $\mathsf{E}_0^k[\dist(x^0,X^*),1/2\rho,1]$ 
and the facts that $\mathsf{b}^k_0\le\mathsf{b}_0^\infty$ and $\mathsf{a}_0^\infty=\beta(2-\beta)\mathsf{b}_0^\infty$.

The proof of item (c) follows  a similar proofline,  using Theorem \ref{thm:rate:conv:bounded}(c) and $\mathsf{Z}_{0}^k=\beta(2-\beta)(k+1)$.
	
We now prove item (b). Let $r\in(0,1)$, $k\ge2r^{-1}$ and set $\ell:=\lceil rk\rceil$. 
We have $\ell\ge2$ and $rk\le\ell\le rk+1$.  We estimate 
\begin{equation}\label{cor:rate:conv:eq2}
\mathsf{a}_\ell^k=\sum_{i=\ell}^{k}\alpha_i^2 =
\sum_{i=\ell}^k\frac{\theta^2}{i(\ln i)^{1+\lambda}}
\le \frac{\theta^2(k-\ell+1)}{\ell(\ln\ell)^{1+\lambda}}.	
\end{equation}
From \eqref{cor:rate:conv:eq1} and \eqref{cor:rate:conv:eq2} we have
\begin{equation}\label{cor:rate:conv:eq3}
\frac{\mathsf{a}_\ell^k}{\mathsf{S}_\ell^k}\le
\frac{\theta\sqrt{k(\ln k)^{1+\lambda}}}{\ell(\ln\ell)^{1+\lambda}}
\le\frac{\theta r^{-1}}{\sqrt{k}}\cdot\frac{\sqrt{(\ln k)^{1+\lambda}}}{(\ln(rk))^{1+\lambda}}=
\theta r^{-1}\frac{(\ln k)^{\frac{1+\lambda}{2}}}{\sqrt{k}\left[\ln k -\ln(r^{-1})\right]^{1+\lambda}},
\end{equation}
\begin{equation}\label{cor:rate:conv:eq4}
\frac{1}{\mathsf{S}_\ell^k}\le
\frac{\theta^{-1}\sqrt{k(\ln k)^{1+\lambda}}}{k-\ell+1}
\le\theta^{-1}(1-r)^{-1}\frac{(\ln k)^{\frac{1+\lambda}{2}}}{\sqrt{k}},
\end{equation}
using the inequality $\ell\ge rk$ in the second inequality of \eqref{cor:rate:conv:eq3} and  
$k-\ell+1\ge(1-r)k$ in the second inequality of \eqref{cor:rate:conv:eq4}. Item (b) follows 
from \eqref{cor:rate:conv:eq3}-\eqref{cor:rate:conv:eq4}, Theorem \ref{thm:rate:conv:bounded}(b), 
the definition of $\mathsf{\widehat J}_\beta[1]$ and $\mathsf{E}^k_\ell[\diam(X_0),1/2\rho,1]$ 
and the fact that $\beta(2-\beta)\mathsf{b}^{k}_{\ell}=\mathsf{a}^k_\ell$. 
\end{proof}
\begin{rem}
Corollary \ref{cor:rate:conv:bounded}(b) implies that, if $X_0$ is compact, then $\dist(\widehat x^k_{\lceil rk\rceil},X^*)$ 
has a better performance than $\dist(x^k,X^*)$ and $\dist(\widehat x^k,X^*)$ when stepsizes as in \eqref{cor:rate:conv:step:bounded} are used. 
Indeed, in Corollary \ref{cor:rate:conv:bounded}(c), $\lambda>0$ can be arbitrarily small, without affecting the constant in the 
convergence rate, and the ``stochastic error'' $r^{-1}\mathsf{\widehat J}_\beta[1]\left[\ln k-\ln(1/r)\right]^{-(1+\lambda)}$ decays 
to zero. For unbounded operators, \eqref{cor:rate:conv:k0} in Corollary \ref{cor:rate:conv} suggests the use of $\lambda>1$ 
and $\theta\sim L$ so that $k_0$ does not become too large. As an example, if $\tau=1.5$, $\theta=L$, $\beta=1$, $\phi=0.5$ and 
$\lambda=2$, we have $k_0=11$. For simplicity we do not state the analogous result for $\dist(\widetilde x^k_{\lceil rk\rceil},X)^2$.
\end{rem}

In Corollaries \ref{cor:rate:conv}-\ref{cor:rate:conv:bounded}, stepsizes $\{\alpha_k\}$ of $O(1)k^{-1/2}(\ln k)^{-(1+\lambda)/2}$ 
are small enough to guarantee asymptotic a.s.-convergence and large enough as to ensure a rate of 
$O(1)k^{-1/2}(\ln k)^{(1+\lambda)/2}$. If asymptotic a.s.-convergence of the whole sequence is not the main concern, 
we show next that one may use larger stepsizes of $O(1)k^{-1/2}$ for ensuring convergence in $L^1$ (hence convergence in 
probability and a.s.-convergence of a subsequence) with a convergence rate of $O(1)k^{-1/2}$. When a constant stepsize $\alpha$ 
is used in method \eqref{algo.extra.approx.eq1}-\eqref{algo.extra.approx.eq2}, we can also give an error bound on the performance 
proportional to $\alpha$. 
Precisely, for fixed $\beta\in(0,2)$, we have $\esp[\dist(\widehat x^k,X^*)]\lesssim k^{-1}+O(\alpha)$ 
and $\esp[\dist(\widetilde x^k,X)^2]\lesssim k^{-1}+O(\alpha^2)$. Such error bounds rigorously justify 
the practical use of constant stepsizes in incremental methods for machine learning, where only an inexact solution is required. 

\begin{corollary}[Solvability and feasibility rates for larger stepsizes: bounded case]\label{cor:rate:conv:const:step}
Assume that the hypotheses of Theorem \ref{thm:rate:conv:bounded} hold. Recall the definition of $\mathsf{\widehat J}_\beta[\cdot]$ 
in Corollary \ref{cor:rate:conv:bounded}. Choose $\theta>0$, $\beta_k\equiv\beta\in(0,2)$ 
and $\tau>1$.  
\begin{itemize}
\item[a)] If we choose a constant stepsize $\alpha_k\equiv\theta\alpha$, then for all $k\ge1$, 
$$
\esp\left[\dist(\widehat x^k,X^*)\right]\le
\frac{\max\{\theta,\theta^{-1}\}}{2\rho}\left\{\frac{\dist(x^0,X^*)^2}{\alpha(k+1)}+\mathsf{\widehat J}_\beta[1]\alpha\right\},
$$
$$
\esp\left[\dist(\widetilde x^k,X)^2\right]\le
\frac{2\mathsf{G}_\tau\max\{1,\theta^2\}}{\beta(2-\beta)}\left\{\frac{\dist(x^0,X^*)^2}{k+1}+\mathsf{\widehat J}_\beta[2]\alpha^2\right\}.
$$
\item[b)] If the total number of iterations $\mathsf{K}\ge1$ is given a priori and for all $k\in[\mathsf{K}]$,
$
\alpha_k\equiv\frac{\theta}{\sqrt{\mathsf{K}+1}},
$
then
$$
\esp\left[\dist(\widehat x^{\mathsf{K}},X^*)\right]\le
\frac{\max\{\theta,\theta^{-1}\}}{2\rho}\cdot\frac{\dist(x^0,X^*)^2+\mathsf{\widehat J}_\beta[1]}{\sqrt{\mathsf{K}+1}},
$$
$$
\esp\left[\dist(\widetilde x^\mathsf{K},X)^2\right]\le
\frac{2\mathsf{G}_\tau\max\{1,\theta^2\}}{\beta(2-\beta)}\cdot\frac{\dist(x^0,X^*)^2+\mathsf{\widehat J}_\beta[2]}{\mathsf{K}+1}.
$$

\item[c)] If $X_0$ is compact and we choose $\alpha_0:=\theta$ and for $k\ge1$,
$
\alpha_k:=\frac{\theta}{\sqrt{k}},
$
then, given $r\in(0,1)$, for all $k\ge r^{-1}$,
$$
\esp\left[\dist(\widehat x_{\lceil rk\rceil}^k,X^*)\right]\le\frac{\max\{\theta,\theta^{-1}\}}{2\rho}\cdot\frac{1}{\sqrt{k}}
\cdot\left\{(1-r)^{-1}\diam(X_0)^2+r^{-1}\mathsf{\widehat J}_\beta[1]\right\},
$$ 
$$
\esp\left[\dist(\widetilde x_{\lceil rk\rceil}^k,X)^2\right]\le\frac{2\mathsf{G}_\tau\max\{1,\theta^2\}}{\beta(2-\beta)}\cdot\frac{1}{k}
\cdot\left\{(1-r)^{-1}\diam(X_0)^2+r^{-1}\mathsf{\widehat J}_\beta[2]\right\}.
$$
\end{itemize}
\end{corollary}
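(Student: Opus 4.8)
The plan is to specialize Theorem \ref{thm:rate:conv:bounded} to the three families of stepsizes listed in (a), (b), (c), using in each case the elementary estimates for partial sums of $\{\alpha_k\}$ and $\{\alpha_k^2\}$ that appear in those stepsize prescriptions, together with the identity $\beta(2-\beta)\mathsf{b}_\ell^k = \mathsf{a}_\ell^k$, $\mathsf{S}_0^k \ge \theta\alpha(k+1)$ or $\mathsf{Z}_0^k = \beta(2-\beta)(k+1)$, etc. All the ``hard analytic work'' is already done inside Theorem \ref{thm:rate:conv:bounded} (which rests on the recursion \eqref{lemma:recursion2}--\eqref{lemma:recursion2:feas} from Lemma \ref{lema:recursion}); what remains here is purely a matter of plugging in concrete stepsize sequences and bounding the resulting ratios.

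For item (a), with $\alpha_k \equiv \theta\alpha$ I would note $\mathsf{a}_0^k = \theta^2\alpha^2(k+1)$, $\mathsf{b}_0^k = \theta^2\alpha^2(k+1)/[\beta(2-\beta)]$, and $\mathsf{S}_0^k = \theta\alpha(k+1)$, $\mathsf{Z}_0^k = \beta(2-\beta)(k+1)$. Substituting into Theorem \ref{thm:rate:conv:bounded}(a) gives
$$
\esp[\dist(\widehat x^k,X^*)] \le \frac{1}{2\rho}\cdot\frac{\dist(x^0,X^*)^2 + \mathsf{G}_\tau(\rho+\sqrt{2C_F})^2\theta^2\alpha^2(k+1)/[\beta(2-\beta)] + \mathsf{H}_\tau C_F^2\theta^2\alpha^2(k+1)}{\theta\alpha(k+1)},
$$
and after dividing through by $\theta\alpha(k+1)$ and recognizing $\mathsf{\widehat J}_\beta[1] = \mathsf{H}_\tau C_F^2 + \mathsf{G}_\tau(\rho+\sqrt{2C_F})^2\beta^{-1}(2-\beta)^{-1}$, the two leftover terms are $\dist(x^0,X^*)^2/[\alpha(k+1)]$ and $\theta\alpha\,\mathsf{\widehat J}_\beta[1]$; bounding $1/\theta$ and $\theta$ by $\max\{\theta,\theta^{-1}\}$ (and likewise $\theta^2$, $\theta^{-2}$ by $\max\{1,\theta^2\}$ for the feasibility bound via Theorem \ref{thm:rate:conv:bounded}(c)) yields the stated inequalities. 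Item (b) is the same computation with the single choice $\alpha = 1/\sqrt{\mathsf{K}+1}$, so that $\alpha(k+1)|_{k=\mathsf{K}} = \sqrt{\mathsf{K}+1}$ and $\alpha^2(k+1)|_{k=\mathsf{K}} = 1$, which makes both terms collapse onto the common denominator $\sqrt{\mathsf{K}+1}$ (resp. $\mathsf{K}+1$).

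For item (c), with $\alpha_0 = \theta$, $\alpha_k = \theta/\sqrt{k}$ for $k\ge1$, and the window index $\ell = \lceil rk\rceil$, I would reuse exactly the window estimates from the proof of Corollary \ref{cor:rate:conv:bounded}(b): since $rk \le \ell \le rk+1$, one has $\mathsf{S}_\ell^k = \sum_{i=\ell}^k \theta i^{-1/2} \ge \theta(k-\ell+1)/\sqrt{k} \ge \theta(1-r)\sqrt{k}$, and $\mathsf{a}_\ell^k = \sum_{i=\ell}^k \theta^2/i \le \theta^2(k-\ell+1)/\ell \le \theta^2 k/(rk) = \theta^2 r^{-1}$. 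Hence $\mathsf{a}_\ell^k/\mathsf{S}_\ell^k \le \theta r^{-1}/[(1-r)\sqrt{k}]$ — but in fact it is cleaner to bound $\mathsf{S}_\ell^k$ from below differently for the two terms in $\mathsf{E}_\ell^k[\diam(X_0),1/2\rho,1]$: for the $\diam(X_0)^2$ term use $1/\mathsf{S}_\ell^k \le \theta^{-1}(1-r)^{-1}/\sqrt{k}$, and for the $\mathsf{\widehat J}_\beta[1]$ term use $\mathsf{a}_\ell^k/\mathsf{S}_\ell^k$ with $\mathsf{a}_\ell^k \le \theta^2 r^{-1}$ and $\mathsf{S}_\ell^k \ge \theta\sqrt{k}$ (taking $k-\ell+1 \ge 1$ suffices here, but using $(1-r)$ is harmless), and recall $\beta(2-\beta)\mathsf{b}_\ell^k = \mathsf{a}_\ell^k$ so the $\mathsf{G}_\tau$-term of $\mathsf{E}$ also reduces to an $\mathsf{a}_\ell^k$-term absorbed into $\mathsf{\widehat J}_\beta[1]$. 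Collecting, $\esp[\dist(\widehat x_{\lceil rk\rceil}^k,X^*)] \le \frac{1}{2\rho}\cdot\frac{1}{\sqrt{k}}\{(1-r)^{-1}\diam(X_0)^2/\theta + r^{-1}\mathsf{\widehat J}_\beta[1]\theta\}$, and bounding $\theta,\theta^{-1}$ by $\max\{\theta,\theta^{-1}\}$ gives the claim; the feasibility estimate is identical with $\mathsf{Z}_\ell^k = \beta(2-\beta)(k-\ell+1)$ in place of $\mathsf{S}_\ell^k$ and the factor $2\mathsf{G}_\tau\max\{1,\theta^2\}/[\beta(2-\beta)]$ out front, and with $\mathsf{\widehat J}_\beta[2]$.

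The only mild subtlety — and the place I expect to spend the most care — is matching the constants precisely: Theorem \ref{thm:rate:conv:bounded} bundles a factor $f = 1/2\rho$ (resp. $f = 2\mathsf{G}_\tau$) inside $\mathsf{E}_\ell^k[R,f,g]$, and one must track how the additional $\max\{\theta,\theta^{-1}\}$ (resp. $\max\{1,\theta^2\}/[\beta(2-\beta)]$) arises only after dividing by $\mathsf{S}_\ell^k$ (resp. $\mathsf{Z}_\ell^k$), which carries a $\theta$ (resp. a $1$) — so the $\theta$-dependence must be split correctly between the $R^2$-term and the $g\mathsf{G}_\tau(\cdots)$/$\mathsf{H}_\tau C_F^2$-terms before taking the max. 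Everything else is a routine substitution into results already established, so no genuinely new argument is needed.
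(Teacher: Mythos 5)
Your overall route is the same as the paper's: plug the three concrete stepsize choices into Theorem \ref{thm:rate:conv:bounded}, compute $\mathsf{S}_\ell^k$, $\mathsf{Z}_\ell^k$, $\mathsf{a}_\ell^k$, use $\beta(2-\beta)\mathsf{b}_\ell^k=\mathsf{a}_\ell^k$, and absorb the $\theta$-dependence into $\max\{\theta,\theta^{-1}\}$ or $\max\{1,\theta^2\}$. Items (a) and (b) are correct as written.

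In item (c), however, your handling of the ratio $\mathsf{a}_\ell^k/\mathsf{S}_\ell^k$ has a flaw. Your first computation bounds $\mathsf{a}_\ell^k\le\theta^2 r^{-1}$ and $\mathsf{S}_\ell^k\ge\theta(1-r)\sqrt{k}$ separately, which yields $\mathsf{a}_\ell^k/\mathsf{S}_\ell^k\le\theta r^{-1}(1-r)^{-1}/\sqrt{k}$ — an extra factor $(1-r)^{-1}$ that does not match the stated constant $r^{-1}\mathsf{\widehat J}_\beta[1]$. Your proposed fix, $\mathsf{S}_\ell^k\ge\theta\sqrt{k}$ ``since $k-\ell+1\ge1$ suffices,'' is false: $\mathsf{S}_\ell^k=\sum_{i=\ell}^k\theta i^{-1/2}\approx(1-r)k\cdot\theta k^{-1/2}=(1-r)\theta\sqrt{k}$, which is strictly smaller than $\theta\sqrt{k}$ for every $r\in(0,1)$ (take $r$ close to $1$ to see the bound fail badly). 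The correct step, which is what the paper does, is to bound the \emph{ratio} directly so that the common factor $k-\ell+1$ cancels:
\begin{equation*}
\frac{\mathsf{a}_\ell^k}{\mathsf{S}_\ell^k}\le\frac{\theta^2(k-\ell+1)/\ell}{\theta(k-\ell+1)/\sqrt{k}}
=\frac{\theta\sqrt{k}}{\ell}\le\frac{\theta r^{-1}}{\sqrt{k}},
\end{equation*}
using $\ell\ge rk$; the factor $(1-r)^{-1}$ is then reserved solely for the $\diam(X_0)^2$ term via $1/\mathsf{S}_\ell^k\le\theta^{-1}(1-r)^{-1}/\sqrt{k}$. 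With that substitution (and its analogue for $\mathsf{a}_\ell^k/\mathsf{Z}_\ell^k$ and $1/\mathsf{Z}_\ell^k$), your argument for (c) goes through and coincides with the paper's.
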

\begin{proof}
Item (a) follows from Theorem \ref{thm:rate:conv:bounded}(a) and (c) and the definitions 
of $\mathsf{\widehat J}_\beta[\cdot]$, $\mathsf{E}^k_0[\cdot]$, $\mathsf{S}_0^k$, $\mathsf{Z}_0^k$, $\mathsf{a}_0^k$ and $\mathsf{b}_0^k$.
Item (b) follows  from item (a). 
We prove now item (c).
Take $r\in(0,1)$, $k\ge r^{-1}$ and set $\ell:=\lceil rk\rceil$. We have $\ell\ge1$ and $rk\le\ell\le rk+1$. We estimate 
\begin{equation}\label{cor:rate:conv:const:eq1}	
\mathsf{S}_\ell^k=\sum_{i=\ell}^{k}\alpha_i 
\ge\frac{\theta(k-\ell+1)}{\sqrt{k}},\quad\quad\mathsf{Z}_\ell^k=\beta(2-\beta)(k-\ell+1),
\end{equation}
using the fact that the minimum stepsize between $\ell$ and $k\ge2$ is $\theta k^{-\frac{1}{2}}$. We also estimate
\begin{equation}\label{cor:rate:conv:const:eq2}	
\mathsf{a}_\ell^k=\sum_{i=\ell}^{k}\alpha_i^2 =
\sum_{i=\ell}^k\frac{\theta^2}{i}
\le \frac{\theta^2(k-\ell+1)}{\ell}.
\end{equation}
From \eqref{cor:rate:conv:const:eq1}-\eqref{cor:rate:conv:const:eq2} we have
\begin{equation}\label{cor:rate:conv:const:eq3}
\frac{\mathsf{a}_\ell^k}{\mathsf{S}_\ell^k}\le
\frac{\theta\sqrt{k}}{\ell}
\le\frac{\theta r^{-1}}{\sqrt{k}},\quad\quad
\frac{\mathsf{a}_\ell^k}{\mathsf{Z}_\ell^k}\le
\frac{\theta^2}{\beta(2-\beta)\ell}
\le\frac{\theta^2 r^{-1}}{\beta(2-\beta)k},
\end{equation}
\begin{equation}\label{cor:rate:conv:const:eq4}
\frac{1}{\mathsf{S}_\ell^k}\le
\frac{\theta^{-1}\sqrt{k}}{k-\ell+1}
\le\frac{\theta^{-1}(1-r)^{-1}}{\sqrt{k}},\quad\quad
\frac{1}{\mathsf{Z}_\ell^k}
\le\frac{(1-r)^{-1}}{\beta(2-\beta)k},
\end{equation}
using $\ell\ge rk$ and $k-\ell+1\ge(1-r)k$. Item (c) follows from 
\eqref{cor:rate:conv:const:eq3}-\eqref{cor:rate:conv:const:eq4}, Theorem \ref{thm:rate:conv:bounded}(b) and (d),
the definitions of $\mathsf{\widehat J}_\beta[\cdot]$ and
$\mathsf{E}_\ell^k[\cdot]$ and the fact that $\beta(2-\beta)\mathsf{b}^{k}_{\ell}=\mathsf{a}^k_\ell$.
\end{proof}

We make a remark concerning the \emph{robustness} of the stepsize sequence in Corollaries \ref{cor:rate:conv}, 
\ref{cor:rate:conv:bounded} and \ref{cor:rate:conv:const:step} in the spirit of Nemirovski et al. \cite{nem3}. The stepsizes presented 
above are robust in the sense that the knowledge of $L$ is not required 
and does not interrupt the advance of the method. Also, a scaling of $\theta$ in the stepsize implies a scaling 
in the  convergence rate which is linear in $\max\{\theta,\theta^{-1}\}$ or $\max\{\theta^2,1\}$. 
Note that these properties hold true in the case of an unbounded operator with approximate projections.

We close this section by showing that, in the case of stochastic approximation, the weak sharpness property 
implies that after a finite number of iterations an auxiliary stochastic program with linear objective 
solves the original variational inequality. This recovers a similar property satisfied in the deterministic setting 
(see Marcotte and Zhu \cite{marcotte}, Theorem 5.1). We estimate the minimum number of iterations  
in terms of the condition number $L/\rho^2$, the variance and the distance of $x^0$ to the solution set, 
when $T$ is $L$-Lipschitz continuous.

We emphasize that the auxiliary problem is still stochastic, an hence, even when $X$ is a polyhedron, 
we cannot conclude that a finite number of steps of a linear programming algorithm  will be enough for finding 
a solution. It is not clear that switching to an SAA method for stochastics LP's will be
computationally more eficcient than continuing with our algorithm. Such issue requires extensive computational
experimentation, which we intend to perform in a future work. Thus, for the time being we look at the next corollary as a possibly interesting theoretical property of weak-sharp SVI's, i.e. an extension to the stochastic setting 
of Theorem 4.2 of \cite{marcotte}.
  
\begin{corollary}[A stochastic optimization problem]\label{cor:aux:problem}
Suppose that $T$ is $(L,\delta)$-H\"older continuous with $\delta\in(0,1]$ and
\begin{enumerate}
\item the assumptions of Corollary \ref{cor:rate:conv} hold with $\delta=1$ (unbounded case), or
\item the assumptions of Corollary \ref{cor:rate:conv:bounded} hold (bounded case).
\end{enumerate}
Then, there exists $\mathsf{V}>0$, such that for all $k\ge2$ with
$
\frac{k}{(\ln k)^{1+\lambda}}>\left(\frac{\mathsf{V}L^{1/\delta}}{\rho^{1+1/\delta}}\right)^2,
$
we have 
$$
\argmin_{x\in X}\langle \esp\left[F(v,\widehat x^k)\right],x\rangle\subset X^*.
$$
Moreover, under condition 1,  
$$
\mathsf{V}:=\frac{\max\{\theta,\theta^{-1}\}}{2}\cdot
\inf_{x^*\in X^*}\left\{\Vert x^0-x^*\Vert^2+
\mathsf{J}_\beta(x^*,k_0,1)\left[2+\frac{1}{2(\ln 2)^{1+\lambda}}+\frac{1}{\lambda(\ln 2)^\lambda}\right]\right\},
$$
while, under condition 2,
$$
\mathsf{V}:=\frac{\max\{\theta,\theta^{-1}\}}{2}\cdot
\left\{\dist(x^0,X^*)^2+
\mathsf{\widehat J}_\beta[1]\left[2+\frac{1}{2(\ln 2)^{1+\lambda}}+\frac{1}{\lambda(\ln 2)^\lambda}\right]\right\}.
$$
\end{corollary}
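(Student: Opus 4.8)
The plan is to derive the inclusion from Theorem~\ref{marcotte.thm4.2} applied with the vector $z:=\esp[F(v,\widehat x^k)]$: once we show that $-z$ belongs to $\interior\mathcal{C}$, where $\mathcal{C}:=\bigcap_{x\in X^*}[\tang_X(x)\cap\polar_{X^*}(x)]^\circ$, the continuity of $T$ (Hölder continuity implies it) yields $\argmin_{x\in X}\langle z,x\rangle\subset X^*$, which is the claim. First I would reduce $z$ to a deterministic quantity: since $\widehat x^k$ is $\alg_k$-measurable and the sample $v$ used to evaluate $F$ at $\widehat x^k$ is independent of $\alg_k$ with distribution $\prob_v$, the tower property and \eqref{expected} give $\esp[F(v,\widehat x^k)]=\esp\big[\esp[F(v,\widehat x^k)\mid\alg_k]\big]=\esp[T(\widehat x^k)]=:\bar z^k$.

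Next I would bring in the sharpness geometry. Assumption~\ref{weakly.sharp.hyphotesis} is \eqref{weakly.sharp1}, which by Proposition~\ref{weak.sharp.equivalence}(i) implies \eqref{weak.sharp.aux}; since \eqref{weak.sharp.aux} and \eqref{weak.sharp.geom2} are equivalent (Theorem~4.1 of Marcotte and Zhu~\cite{marcotte}), we obtain \eqref{weak.sharp.geom2}: there is $\rho>0$ with $-T(x^*)+\rho B(0,1)\subset\mathcal{C}$ for \emph{every} $x^*\in X^*$, and $\mathcal{C}$ is closed and convex, being a polar set, hence an intersection of closed half-spaces. The point of passing to \eqref{weak.sharp.geom2} is precisely that both the radius $\rho$ and the cone $\mathcal{C}$ are uniform in $x^*$, which allows averaging over a random point of $X^*$. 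Fixing $k$ and setting $\bar x^k:=\Pi_{X^*}(\widehat x^k)$, Hölder continuity of $T$ gives $-T(\widehat x^k)=-T(\bar x^k)+r^k$ with $\|r^k\|\le L\,\dist(\widehat x^k,X^*)^\delta$. For each fixed $w$ with $\|w\|\le\rho$, \eqref{weak.sharp.geom2} yields $-T(\bar x^k)+w\in\mathcal{C}$ almost surely; since $\mathcal{C}$ is an intersection of closed half-spaces and $T(\bar x^k)$ is integrable (it is bounded by $\sqrt{2}\,C_F$ under Assumption~\ref{lipschitz}(ii); under Assumption~\ref{lipschitz}(i) with $\delta=1$ it lies in $L^1$ because $T$ is Lipschitz and $\{\widehat x^k\}$ is $L^2$-bounded by Proposition~\ref{prop:boundedness:L1}), taking expectations gives $\esp[-T(\bar x^k)]+w\in\mathcal{C}$, i.e. $\esp[-T(\bar x^k)]+\rho B(0,1)\subset\mathcal{C}$. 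Hence $\esp[-T(\bar x^k)]$ lies in $\interior\mathcal{C}$, as does every point at distance strictly less than $\rho$ from it. Since $-\bar z^k=\esp[-T(\bar x^k)]+\esp[r^k]$ and, by Jensen's inequality applied to the concave function $t\mapsto t^\delta$, $\|\esp[r^k]\|\le L\,\esp[\dist(\widehat x^k,X^*)^\delta]\le L\big(\esp[\dist(\widehat x^k,X^*)]\big)^\delta$, it suffices to guarantee $L\big(\esp[\dist(\widehat x^k,X^*)]\big)^\delta<\rho$ in order to place $-\bar z^k$ in $\interior\mathcal{C}$.

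Finally I would make the threshold on $k$ explicit. Under condition~1, Corollary~\ref{cor:rate:conv}(b), and under condition~2, Corollary~\ref{cor:rate:conv:bounded}(a), each gives $\esp[\dist(\widehat x^k,X^*)]\le\frac{\mathsf{V}}{\rho}\cdot\frac{(\ln k)^{(1+\lambda)/2}}{\sqrt{k+1}}$ with $\mathsf{V}=\frac{\max\{\theta,\theta^{-1}\}}{2}\mathsf{Q}_{\beta,\lambda}(x^0,k_0,1)$, respectively $\mathsf{V}=\frac{\max\{\theta,\theta^{-1}\}}{2}\,\mathsf{\widehat Q}_{\beta,\lambda}[x^0,1]$, which are exactly the two expressions in the statement. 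Then $L\big(\esp[\dist(\widehat x^k,X^*)]\big)^\delta<\rho$ is implied by $\frac{\mathsf{V}}{\rho}\cdot\frac{(\ln k)^{(1+\lambda)/2}}{\sqrt{k}}<(\rho/L)^{1/\delta}$, and squaring and rearranging turns this into $\frac{k}{(\ln k)^{1+\lambda}}>\big(\mathsf{V}L^{1/\delta}/\rho^{1+1/\delta}\big)^2$. Applying Theorem~\ref{marcotte.thm4.2} with $z=\bar z^k=\esp[F(v,\widehat x^k)]$ then gives the inclusion.

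I expect the middle step to be the delicate one: converting the pointwise sharpness information — valid at each individual $x^*\in X^*$ — into a statement about the expected value $\esp[-T(\bar x^k)]$ at a \emph{random} projection point. This is what forces the use of the \emph{geometric} form \eqref{weak.sharp.geom2} rather than \eqref{weakly.sharp1} directly, and requires checking that $\mathcal{C}$ is closed and convex, that $T(\bar x^k)$ is integrable in both regimes, and that Jensen's inequality is applied in the correct direction for the Hölder exponent $\delta\in(0,1]$; the rest is bookkeeping with the rates already established in Corollaries~\ref{cor:rate:conv} and~\ref{cor:rate:conv:bounded}.
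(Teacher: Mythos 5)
Your proposal is correct and follows essentially the same route as the paper's proof: reduce $\esp[F(v,\widehat x^k)]$ to $\esp[T(\widehat x^k)]$ by the tower property, pass from Assumption \ref{weakly.sharp.hyphotesis} through Proposition \ref{weak.sharp.equivalence} and the Marcotte--Zhu equivalence to the ball inclusion \eqref{weak.sharp.geom2}, push the expectation inside the convex set via Jensen, control $\Vert\esp[T(\widehat x^k)]-\esp[T(\bar x^k)]\Vert<\rho$ by H\"older continuity together with the rates of Corollaries \ref{cor:rate:conv}(b) and \ref{cor:rate:conv:bounded}(a), and conclude with Theorem \ref{marcotte.thm4.2}. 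Your extra care about the closedness/convexity of the intersection of polars and the integrability of $T(\bar x^k)$ only makes explicit steps the paper leaves implicit.
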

\begin{proof}
Call $\bar x^k:=\Pi_{X^*}(\widehat x^k)$. By the choice of $k$, the definition of $\mathsf{V}$ 
and Corollaries \ref{cor:rate:conv}(b) and \ref{cor:rate:conv:bounded}(a), we have 
\begin{equation}\label{cor:aux:problem:eq1}
\esp\left[\Vert \widehat x^k-\bar x^k\Vert\right]=\esp\left[\dist(\widehat x^k,X^*)\right]<(\rho/L)^{\frac{1}{\delta}}.
\end{equation}
From the H\"older-continuity of $T$,
\begin{equation}\label{lipschitz.aux}
\left\Vert \esp[T(\widehat x^k)]-\esp[T(\bar x^k)]\right\Vert \le
\esp\left[\left\Vert T(\widehat x^k)-T(\bar x^k)\right\Vert\right] \le  
L\esp\left[\Vert \widehat x^k-\bar x^k\Vert^\delta\right] \le L\esp\left[\Vert \widehat x^k-\bar x^k\Vert\right]^{\delta}<\rho,
\end{equation}
using Jensen's inequality in the first inequality, H\"older's inequality in third inequality and
\eqref{cor:aux:problem:eq1} in last inequality. 

From Proposition \ref{weak.sharp.equivalence}, Assumption \ref{weakly.sharp.hyphotesis} 
and the equivalence between \eqref{weak.sharp.geom2} and \eqref{weak.sharp.aux}, we get
that the Euclidean ball of center $-T(\bar x^k)$ and radius $\rho$ is contained in 
$
\bigcap_{x\in X^*}[\tang_{X}(x)\cap\polar_{X^*}(x)]^\circ.
$
By the convexity of the ball and Jensen's inequality, we have
\begin{equation}\label{cor:aux:problem:eq2}
-\esp\left[T(\bar x^k)\right]+\rho B(0,1)\subset\bigcap_{x\in X^*}[\tang_{X}(x)\cap\polar_{X^*}(x)]^\circ.
\end{equation}
From \eqref{lipschitz.aux} and \eqref{cor:aux:problem:eq2}  we get that
$-\esp[T(\widehat x^k)]\in\interior\big(\bigcap_{x\in X^*}[\tang_{X}(x)\cap\polar_{X^*}(x)]^\circ\big)$. 
Hence 
we conclude  
from Theorem \ref{marcotte.thm4.2} that
\begin{equation}\label{cor:aux:problem:eq3}
\argmin_{x\in X}\langle \esp[T(\widehat x^k)],x\rangle\subset X^*.
\end{equation}
Finally, we observe that
$
\esp\left[T(\widehat x^k)\right]=\esp\left[\esp\left[F(v,\widehat x^k)\big|\alg_k\right]\right]=\esp[F(v,\widehat x^k)],
$
using Assumption \ref{unbiased}, $\widehat x^k\in\alg_k$ and  
the property $\esp[\esp[\cdot|\alg_k]]=\esp[\cdot]$.
The results follows from $\esp\left[T(\widehat x^k)\right]=\esp[F(v,\widehat x^k)]$ and \eqref{cor:aux:problem:eq3}.
\end{proof}

\section{An incremental projection method with regularization for Cartesian SVI}\label{s3}

In this section we shall study incremental projections, dropping the weak sharpness property of Section \ref{s2} and assuming 
only monotonicity of the operator. Additionally, we analyze the distributed version of the method, which includes 
the centralized case ($m=1$) in particular. For the sake of clarity, we present next the Cartesian and constraint structures in such framework.

\subsection{Cartesian structure}\label{ss3.0.1}

We assume in this section that the stochastic variational inequality \eqref{VI}-\eqref{expected} 
has a Cartesian structure. We consider 
the decomposition 
$
\re^n=\re^{n_1}\times\cdots\times\re^{n_m},
$ 
with  $n=n_1+\ldots+n_m$ and furnish this Cartesian space 
with the standard inner product 
$
\langle x,y\rangle=\sum_{j=1}^m\langle x_j,y_j\rangle,
$ 
for $x=(x_1,\ldots,x_m)$ and $y=(y_1,\ldots,y_m)$. We suppose that the feasible set $X\subset\re^n$ has the form
$
X=X^1\times\cdots\times X^m,
$
where each component $X^j\subset\re^{n_j}$ is a closed and convex set for $j\in[m]$. We emphasize that 
the orthogonal projection under a Cartesian structure is simple: 
for $x=(x_1,\ldots,x_m)\in\re^n$ and $Y=Y^1\times\ldots\times Y^m\subset\re^n$ with $x_j\in\re^{n_j}$ and $Y^j\subset\re^{n_j}$, 
we have
$
\Pi_Y(x)=(\Pi_{Y^1}(x_1),\ldots,\Pi_{Y^m}(x_m)).
$

We assume the random variable takes the form $v=(v_1,\ldots,v_m):\Omega\rightarrow\Xi$, where $v_j$ corresponds 
to the randomness of agent $j$, the random operator $F:\Xi\times\re^n\rightarrow\re^n$ has the form 
$
F(v,x)=(F_1(v_1,x),\ldots,F_m(v_m,x)),
$
with $F_j(v_j,\cdot):\re^n\rightarrow\re^{n_j}$ for $j\in[m]$. 
From \eqref{expected}, the mean operator has the form 
$T=(T_1,\ldots,T_m)$ with $T_j(x)=\esp[F_j(v_j,x)]$ for $j\in[m]$. Such framework includes stochastic multi-agent optimization 
and stochastic Nash equilibrium problems as special cases.

\subsection{Constraint structure}\label{ss3.0.2}

In order to exploit the use of incremental projections (as in Section \ref{s2}) in the Cartesian framework, 
we assume from now on that for $j\in[m]$, each Cartesian component $X^j$ of $X=X^1\times\ldots\times X^m$ has the following form:
\begin{equation}\label{constraint:set:cart}
X^j=X_0^j\cap\left(\cap_{i\in\constr_j}X_i^j\right),
\end{equation}
where $\{X_0^j\}\cup\{X_i^j:i\in\constr_j\}$ is a collection of closed and convex subsets of $\re^{n_j}$. 
Given $j\in[m]$, we assume that the projection operator onto $X_0^j$ is computationally easy to evaluate, 
and that for every $i\in\constr_j$, $X_i^j$ is representable in $\re^{n_j}$ as 
\begin{equation}\label{assump:represent:cart}
X_i^j=\{x\in\re^{n_j}:g_{i}(j|x)\le0\},
\end{equation}
for some convex function $g_{i}(j|\cdot):\re^{n_j}\rightarrow\re\cup\{\infty\}$ with domain $\dom g_{i}(j|\cdot)\subset X_0^j$. 
We denote the positive part of $g_{i}(j|\cdot)$ as 
$
g^+_{i}(j|x):=\max\{g_{i}(j|x),0\},
$
for $x\in\re^{n_j}$. We also assume that, for every $i\in\constr_j$, the subgradients of $g^+_{i}(j|\cdot)$ 
at points $x\in X_0^j-X_i^j$ are easily computable and that $\{\partial g^+_{i}(j|\cdot):i\in\constr_j\}$ 
is uniformly bounded over $X_0^j$, i.e., there exists $C_g^j>0$ such that 
\begin{equation}\label{assump:subgrad:bound:cart}
\Vert d\Vert\le C_g^j. 
\end{equation}
for all $x\in X_0^j$, all $i\in\constr_j$ and all $d\in\partial g^+_{i}(j|x)$. 

\subsection{Statement of the algorithm}\label{ss3.1}

For problems endowed with the Cartesian structure and the constraint structure of Sections \ref{ss3.0.1} and \ref{ss3.0.2}, 
our method advances in a distributed fashion for each Cartesian component $j\in[m]$, as in the incremental projection method 
\eqref{algo.extra.approx.eq1}-\eqref{algo.extra.approx.eq2} with an additional 
Tykhonov regularization (in order to cope with the plainly monotone case). Precisely, fix the Cartesian component $j\in[m]$. 
In a first stage, given the current iterate $x^k$, the method advances in the direction $-F_j(v^k_j,x^k)-\epsilon_{k,j}x^k_j$ 
with stepsize $\alpha_{k,j}$, after taking the sample $v^k_j$ and choosing the regularization parameter $\epsilon_{k,j}>0$, 
producing an auxiliary iterate $y^k_j$. In the second stage, a soft constraint $X_{\omega_{k,j}}^j$ is randomly chosen with the random 
control $\omega_{k,j}\in\constr_j$, and the method advances in the direction opposite to a subgradient of $g^+_{\omega_{k,j}}(j|\cdot)$ 
at the point $y^k_j$ with a stepsize $\beta_{k,j}$, producing the next iterate $x^{k+1}_j$. The iterates are collected in $x^{k+1}$ 
and the method continues. Formally, the method takes the form:

\begin{algo}[Regularized incremental projection method: distributed case]
\quad
\begin{enumerate}
\item{\bf Initialization:} Choose the initial iterate $x^0\in\mathbb{R}^n$, 
the stepsize sequences $\alpha^k=(\alpha_{k,1},\ldots,\alpha_{k,m})\in(0,\infty)^m$ and 
$\beta^k=(\beta_{k,1},\ldots,\beta_{k,m})\in(0,2)^m$, the regularization 
sequence $\epsilon^k=(\epsilon_{k,1},\ldots,\epsilon_{k,m})\in(0,\infty)^m$,
the random control sequence $\omega^k=(\omega_{k,1},\ldots,\omega_{k,m})\in\constr_1\times\ldots\times\constr_m$ 
and the operator sample sequence $v^k=(v^k_1\,\ldots,v^k_m)$.
\item {\bf Iterative step:} Given $x^k=(x^k_1,\ldots,x^k_m)$, define, for each $j\in[m]$, 
\begin{eqnarray}
y_j^{k}&=&\Pi_{X_0^j}\left[x_j^k-\alpha_{k,j}\left(F_j(v^k_j,x^k)+\epsilon_{k,j} x_j^k\right)\right]\label{algo.tikhonov.approx},\\
x^{k+1}_j&=&\Pi_{X_0^j}\left[y^k_j-\beta_{k,j}\frac{g^+_{\omega_{k,j}}(j|y^k_j)}{\Vert d^k_j\Vert^2}d^k_j\right],\label{algo.tikhonov.approx:eq2}
\end{eqnarray}
where $d^{k}_j\in\partial g^+_{\omega_{k,j}}(j|y^k_j)-\{0\}$ if $g_{\omega_{k,j}}(j|y^k_j)>0$, and 
$d^k_j=d$ for any $d\in\re^{n_j}-\{0\}$ if $g_{\omega_{k,j}}(j|y^k_j)\le0$.
\end{enumerate}
\end{algo}

The first stage \eqref{algo.tikhonov.approx} of the iterative step can be written compactly as
$$
y^{k}=\Pi_{X_0}\left[x^k-D(\alpha_{k})\cdot\left(T(x^k)+\varsigma^k+D(\epsilon_{k}) x^k\right)\right],
$$
where
$
X_0:=X_0^1\times\ldots\times X_0^m,
$
\begin{equation}\label{def:varsigma}
\varsigma^k_j:=F_j(v^k_j,x^k)-T_j(x^k),\quad j\in[m],
\end{equation}
with $\varsigma^k:=(\varsigma_j^k)_{j=1}^m$ and $D(\alpha)$ denotes the block-diagonal matrix in $\re^{n\times n}$ defined as
$$
D(\alpha):=
\begin{bmatrix}
\alpha_1 I_{n_1} & 0 & 0 \\
 & \ddots & \\
0 & 0 & \alpha_m I_{n_m}
\end{bmatrix},
$$
with $\alpha=(\alpha_1,\ldots,\alpha_m)\in\re^m_{>0}$, 
and $I_{n_j}\in\re^{n_j\times n_j}$ denoting the identity matrix  
for each $j\in[m]$. 

\subsection{Discussion of the assumptions}\label{ss3.2}

We consider the natural filtration
$$
\alg_k=\sigma(\omega^0,\ldots,\omega^{k-1},v^0,\ldots,v^{k-1}).
$$

\begin{assump}\label{mono.lips.unbiased.tik}
We request  Assumptions \ref{existence}-\ref{unbiased} and Assumption \ref{lipschitz}(i).
\end{assump}
 
In this section we avoid the weak sharpness property assumed in Section \ref{s2}. We now state the assumptions concerning 
the approximate projections which accommodate the Cartesian structure. 
In simple terms, we require each Cartesian component $X^j$ given by \eqref{constraint:set:cart} to satisfy 
Assumption \ref{assump:constraint:reg} of Section \ref{s2}. This is formally stated in Assumption \ref{assump:constraint:reg:cart}. 
Also, the agents' stepsizes and regularization sequences require a partial 
coordination specified in Assumption \ref{approx.step.tik}. 
\begin{assump}[Constraint sampling and regularity]\label{assump:constraint:reg:cart}
For each $j\in[m]$, there exists $c^j>0$, such that a.s. for all $k\in\mathbb{N}$ and all $x\in X^j_0$,
$$
\dist(x,X^j)^2\le c^j\cdot\esp\left[g^+_{\omega_{k,j}}(j|x)\Big|\alg_k\right].
$$
\end{assump}
We observe that Assumption \ref{assump:constraint:reg:cart} requires a sampling coordination between 
the control sequences $\{\omega_{k,j}\}_{k=0}^\infty$ for $j\in[m]$, since the filtration $\alg_k$ accumulates the history 
of the control sequence of every Cartesian component. The next lemma shows that this assumption is immediately satisfied 
if each agent has a metric regular decision set and the constraint sampling is independent between agents and uniform i.i.d. for each agent.

\begin{lemma}[Sufficient condition for Assumption \ref{assump:constraint:reg:cart}]\label{lemma:sufficience:reg:cart}
Suppose that $\{v^k\}$ and $\{\omega_k\}$ are independent sequences, $\omega_{k,1},\ldots,\omega_{k,m}$ 
are independent for each $k$ and the following conditions hold: for each $j\in[m]$, $|\mathcal{I}^j|<\infty$ and
\begin{itemize}
\item[(i)] The sequence $\{\omega_{k,j}\}_{k=0}^\infty$ is an i.i.d. sample of a random variable $\omega^j$ 
taking values on $\mathcal{I}^j$ such that for some $\lambda^j>0$,
$$
\prob\left(\omega^j=i\right)\ge\frac{\lambda^j}{|\constr^j|},\quad\forall i\in\constr^j,
$$
\item[(ii)] The set $X^j$ is \emph{metric regular}: there is $\eta^j>0$ such that for all $x\in X_0^j$,
$$
\dist(x,X^j)^2\le\eta^j\max_{i\in\constr^j}[g_i^+(j|x)]^2.
$$
\end{itemize}
Then Assumption \ref{assump:constraint:reg:cart} holds with $c^j=\frac{\eta^j|\constr^j|}{\lambda^j}$ for $j\in[m]$.
\end{lemma}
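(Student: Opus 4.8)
The plan is to establish the bound separately for each agent $j\in[m]$, transcribing the argument in the proof of Lemma~\ref{lemma:sufficience:reg} with $g_i$, $X$, $X_0$, $\eta$, $\lambda$, $|\constr|$ replaced by $g_i(j|\cdot)$, $X^j$, $X_0^j$, $\eta^j$, $\lambda^j$, $|\constr_j|$. Fix $j$. The first step is to check that for every $k\in\mathbb{N}_0$ the index $\omega_{k,j}$ is independent of the pooled filtration $\alg_k=\sigma(\omega^0,\ldots,\omega^{k-1},v^0,\ldots,v^{k-1})$: since $\{v^k\}$ is independent of the whole control process, the sequence $\{\omega_{k,j}\}_{k\ge0}$ is i.i.d., and the coordinates $\omega_{k,1},\ldots,\omega_{k,m}$ are independent at each time $k$, the variable $\omega_{k,j}$ is independent of $(\omega^0,\ldots,\omega^{k-1},v^0,\ldots,v^{k-1})$ and hence of $\alg_k$.

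Granting this, I would fix $x\in X_0^j$ and $k\in\mathbb{N}_0$ and run the chain
\begin{eqnarray*}
\esp\big[[g_{\omega_{k,j}}^+(j|x)]^2\,\big|\,\alg_k\big]
&=&\esp\big[[g_{\omega^j}^+(j|x)]^2\big]\\
&=&\sum_{i\in\constr_j}[g_i^+(j|x)]^2\,\prob(\omega^j=i)\\
&\ge&\frac{\lambda^j}{|\constr_j|}\sum_{i\in\constr_j}[g_i^+(j|x)]^2\\
&\ge&\frac{\lambda^j}{|\constr_j|}\max_{i\in\constr_j}[g_i^+(j|x)]^2\\
&\ge&\frac{\lambda^j}{\eta^j|\constr_j|}\dist(x,X^j)^2,
\end{eqnarray*}
where the first equality uses the independence just proved together with the fact that $\omega_{k,j}$ has the law of $\omega^j$, the first inequality uses hypothesis~(i), the third uses the trivial bound $\sum\ge\max$, and the last is the metric regularity~(ii). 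Rearranging yields Assumption~\ref{assump:constraint:reg:cart} with $c^j=\eta^j|\constr_j|/\lambda^j$, which is the claim.

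The one genuinely new point relative to the scalar Lemma~\ref{lemma:sufficience:reg} --- and the step I would be most careful about --- is the independence check in the first paragraph, because $\alg_k$ here records the controls $\omega^0,\ldots,\omega^{k-1}$ of \emph{all} agents, not merely those of agent $j$. Removing the conditioning therefore requires the cross-agent independence of the controls in addition to their i.i.d.-in-time structure and their independence from the operator samples $\{v^k\}$; without those hypotheses one could not peel $\alg_k$ off the conditional expectation. Everything after that step is the routine componentwise transcription of the earlier proof indicated above.
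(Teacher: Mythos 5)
Your proposal is correct and follows essentially the same route as the paper: the paper's proof likewise reduces the claim to the observation that the cross-agent independence of $\omega_{k,1},\ldots,\omega_{k,m}$, the i.i.d.-in-time structure, and the independence from $\{v^k\}$ together imply $\omega_{k,j}$ is independent of the pooled filtration $\alg_k$, and then invokes the componentwise transcription of Lemma \ref{lemma:sufficience:reg}. You correctly identified the independence check as the only genuinely new ingredient.
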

\begin{proof}
Since $\{v^k\}$ and $\{\omega_k\}$ are independent, $\{\omega_k\}_{k=0}^\infty$ is independent and $\omega_{k,1},\ldots,\omega_{k,m}$ 
are independent for each $k$, it follows that for all $k\in\mathbb{N}_0$ and $j\in[m]$, $\omega_{k,j}$ is independent of $\alg_k$. 
The remainder of the proof follows the proof line of Lemma \ref{lemma:sufficience:reg}.
\end{proof}

\begin{assump}[Partial coordination of stepsizes and regularization sequences] \label{approx.step.tik}
For $j\in[m]$, consider the stepsize sequences $\{\alpha_{k,j}\}_{k=0}^\infty$ and $\{\beta_{k,j}\}_{k=0}^\infty$ 
and the regularization sequence $\{\epsilon_{k,j}\}_{k=0}^\infty$ in Algorithm \eqref{algo.tikhonov.approx}-\eqref{algo.tikhonov.approx:eq2}. 
Without loss of generality, for $j\in[m]$ we add the term $\epsilon_{-1,j}$ to the regularization sequence. 
We use the notation $u_{k,\min}:=\min_{j\in[m]}u_{k,i}$, $u_{k,\max}:=
\max_{j\in[m]}u_{k,j}$ for $u\in\{\alpha,\beta,\epsilon\}$, $\Delta_k:=\alpha_{k,\max}-\alpha_{k,\min}$, 
$\Gamma_k:=\epsilon_{k-1,\max}-\epsilon_{k,\min}$ and $\mathsf{B}_k:=\beta_{k,\min}(2-\beta_{k,\max})$.
We then assume that $0<\beta_{k,\min}\le\beta_{k,\max}<2$ and 
\begin{itemize}
\item[(i)] For each $j\in[m]$, $\{\epsilon_{k,j}\}_{k=-1}^\infty$ is a decreasing positive sequence converging to zero.
\item[(ii)] 
$	
\lim_{k\rightarrow\infty}\frac{\alpha_{k,\max}^2}{\alpha_{k,\min}\epsilon_{k,\min}}=0,
$ 
$
\lim_{k\rightarrow\infty}\frac{\alpha_{k,\max}^2}{\mathsf{B}_k\alpha_{k,\min}\epsilon_{k,\min}}=0,
$
$
\lim_{k\rightarrow\infty}\frac{\Delta_k}{\alpha_{k,\min}\epsilon_{k,\min}}=0
$
and 
$
\lim_{k\rightarrow\infty}\alpha_{k,\min}\epsilon_{k,\min}=0.
$
\item[(iii)] $\sum_{k=0}^\infty\alpha_{k,\min}\epsilon_{k,\min}=\infty$.
\item[(iv)] 
$
\sum_{k=0}^\infty\alpha_{k,\max}^2<\infty,
$
$
\sum_{k=0}^\infty\frac{\alpha_{k,\max}^2}{\mathsf{B}_k}<\infty,
$
$
\sum_{k=0}^\infty\left(\frac{\Gamma_k}
{\epsilon_{k,\min}}\right)^2\left(1+\alpha_{k,\min}^{-1}\epsilon_{k,\min}^{-1}\right)<\infty
$
and
\begin{equation}\label{extra.condition}
\sum_{k=0}^\infty\frac{\Delta_k^2}{\alpha_{k,\min}\epsilon_{k,\min}}<\infty.
\end{equation}
\item[(v)]
$
\lim_{k\rightarrow\infty}\frac{\Gamma_k^2}
{\epsilon_{k,\min}^3\alpha_{k,\min}}\left(1+\alpha_{k,\min}^{-1}\epsilon_{k,\min}^{-1}\right)=0.
$ 
\end{itemize} 
\end{assump}

Assumption \ref {approx.step.tik} contains usual conditions on the regularization parameters of Tykhonov algorithms and 
on the stepsize for SA algorithms, with certain coordination across stepsizes 
and regularization parameters.  Assumption \ref{approx.step.tik} includes 
Assumption 2 in \cite{Uday} with the addition of \eqref{extra.condition}, due to the use of approximate projections 
(in addition to asynchronous stepsizes).\footnote{We observe that this condition is trivially satisfied with synchronous stepsizes, 
i.e., $\alpha_{k,j}=\alpha_{k,\ell}$ for all $k,j,\ell$.} Next we show that Assumption \ref{approx.step.tik} 
is satisfied by explicit stepsizes and regularization parameters.
\begin{corollary}[Asynchronous stepsizes and regularization parameters]\label{cor:step:reg}
Take $\delta\in(0,\frac{1}{2})$ and real numbers $\underline{C}\le\overline{C}$, $\underline{D}\le\overline{D}$. 
The following stepsizes and regularization parameters satisfy Assumption \ref{approx.step.tik}: 
for any $j\in[m]$ and $k\in\mathbb{N}_0$, take $C_j\in[\underline{C},\overline{C}]$, $D_j\in[\underline{D},\overline{D}]$, $\beta_j\in(0,2)$ and
$$
\alpha_{k,j}=O(1)\frac{1}{(k+C_j)^{\frac{1}{2}+\delta}},\quad\quad
\epsilon_{k,j}=O(1)\frac{1}{(k+D_j)^{\frac{1}{2}-\delta}},\quad\quad\beta_{k,j}\equiv\beta_j.
$$
\end{corollary}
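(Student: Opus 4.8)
The plan is to verify each of the five conditions (i)--(v) of Assumption~\ref{approx.step.tik} directly, by substituting the prescribed power-law forms and reducing every requirement either to the convergence/divergence of a generalized harmonic series $\sum_k k^{-r}$ or to the sign of an exponent in a limit $k^{-r}\to 0$. Since finitely many initial terms affect neither the convergence of a series nor a limit, I may absorb the $O(1)$ prefactors into constants and work with the leading orders $\alpha_{k,j}\sim(k+C_j)^{-q}$ and $\epsilon_{k,j}\sim(k+D_j)^{-p}$, where $q:=\tfrac12+\delta$ and $p:=\tfrac12-\delta$, so that $p,q\in(0,1)$, $q>\tfrac12$ and $p+q=1$. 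Because $\beta_{k,j}\equiv\beta_j$ with only finitely many agents, $\mathsf{B}_k=\beta_{k,\min}(2-\beta_{k,\max})$ is a fixed constant in $(0,1]$, which eliminates $\mathsf{B}_k$ from all conditions in which it appears.

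The three estimates driving the whole argument are: (a) $\alpha_{k,\min}\sim\alpha_{k,\max}\sim k^{-q}$ and $\epsilon_{k,\min}\sim\epsilon_{k,\max}\sim k^{-p}$, hence $\alpha_{k,\min}\epsilon_{k,\min}\sim k^{-(p+q)}=k^{-1}$; (b) the stepsize asynchrony gap $\Delta_k=\alpha_{k,\max}-\alpha_{k,\min}=O(k^{-(q+1)})$; and (c) the regularization gap $\Gamma_k=\epsilon_{k-1,\max}-\epsilon_{k,\min}=O(k^{-(p+1)})$. Estimates (b) and (c) come from the mean value theorem applied to $t\mapsto t^{-q}$ and $t\mapsto t^{-p}$: the relevant arguments ($k+C_j$ in case (b); $k-1+D_j$ and $k+D_j$ in case (c)) range over an interval of length bounded by the fixed constant $\overline C-\underline C$, resp. $1+\overline D-\underline D$, so each difference is at most that constant times the derivative at a point of order $k$, which is of order $k^{-(q+1)}$, resp. $k^{-(p+1)}$. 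With (a)--(c) in hand, (i) holds because $p>0$; the four limits in (ii) have orders $k^{-2\delta}$, $k^{-2\delta}$ (using $\mathsf{B}_k\sim 1$), $O(k^{-q})$ and $k^{-1}$, all tending to $0$; (iii) is $\sum_k k^{-1}=\infty$; and in (iv) one has $\sum_k\alpha_{k,\max}^2\sim\sum_k k^{-(1+2\delta)}<\infty$ (and likewise with $\mathsf{B}_k$ in the denominator), while \eqref{extra.condition} follows from $\Delta_k^2/(\alpha_{k,\min}\epsilon_{k,\min})=O(k^{-2(q+1)+1})=O(k^{-(2+2\delta)})$.

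The step I expect to be the real obstacle is the careful handling of the two conditions that involve $\Gamma_k$ --- the last series in (iv), namely $\sum_k(\Gamma_k/\epsilon_{k,\min})^2(1+\alpha_{k,\min}^{-1}\epsilon_{k,\min}^{-1})$, and the limit (v). After substituting (a) and (c) these reduce to exponent comparisons in which $\alpha_{k,\min}\epsilon_{k,\min}\sim k^{-1}$ sits exactly at the boundary order, so that the full power of $k$ gained in $\Gamma_k=O(k^{-(p+1)})$ relative to $\epsilon_{k,\min}\sim k^{-p}$ is precisely what must be exploited; the crude bound $\Gamma_k\le\epsilon_{k-1,\max}$ is far too weak, and one must keep the mean-value estimate sharp (in particular using that the arguments of $\epsilon_{k-1,\max}$ and $\epsilon_{k,\min}$ differ only by the bounded quantity $1+\overline D-\underline D$) and retain the $O(1)$ prefactors, in order to push these terms onto the convergent, resp. vanishing, side. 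Once (iv) and (v) are settled this way, all of Assumption~\ref{approx.step.tik} holds and the corollary follows.
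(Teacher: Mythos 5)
Your route is genuinely different from the paper's: the paper verifies only the new coordination condition \eqref{extra.condition} (via a binomial expansion yielding $\Delta_k^2/(\alpha_{k,\min}\epsilon_{k,\min})=O(k^{-(2+2\delta)})$, the same order your mean-value argument gives) and delegates every other condition to Lemma 4 of \cite{Uday}, whereas you verify everything from scratch. Your direct verification of (i)--(iii), of $\sum_k\alpha_{k,\max}^2<\infty$ and $\sum_k\alpha_{k,\max}^2/\mathsf{B}_k<\infty$, and of \eqref{extra.condition} is correct and more self-contained than the paper's.

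However, the two conditions you yourself single out as ``the real obstacle'' --- the third series in Assumption~\ref{approx.step.tik}(iv) and the limit in (v) --- are never actually computed, and carrying out the computation shows the obstacle is one of substance, not of care. Write $a=\tfrac12+\delta$, $b=\tfrac12-\delta$, so $a+b=1$. Your mean-value bound $\Gamma_k=O(k^{-(b+1)})$ is already sharp and two-sided: the arguments $k-1+D_{\min}$ and $k+D_{\max}$ entering $\epsilon_{k-1,\max}$ and $\epsilon_{k,\min}$ differ by at least $1$, so $\Gamma_k=\Theta(k^{-(b+1)})$ and $\Gamma_k/\epsilon_{k,\min}=\Theta(k^{-1})$; there is no further decay to extract by ``keeping the estimate sharp.'' Since $\alpha_{k,\min}^{-1}\epsilon_{k,\min}^{-1}=\Theta(k^{a+b})=\Theta(k)$, the summand of the third series in (iv) is $\Theta(k^{-2}\cdot k)=\Theta(k^{-1})$, whose sum diverges, and the quantity in (v) is $\Theta(k^{(a+b)-2}\cdot k^{a+b})=\Theta(k^{2(a+b)-2})=\Theta(1)$, which does not tend to zero. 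Both conditions reduce to the requirement $a+b<1$ and fail exactly at the boundary $a+b=1$ chosen in the corollary; they could be rescued only by taking the regularization exponent strictly smaller than $\tfrac12-\delta$ (which would then alter the logarithmic terms in the rate analysis). So your plan cannot be completed as written. You have, in effect, exposed a tension between the corollary and conditions (iv)--(v) of Assumption~\ref{approx.step.tik} that the paper's own proof does not confront, since it simply cites \cite{Uday} for these items.
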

\begin{proof}
Except for condition \eqref{extra.condition}, all other conditions in Assumption \ref{approx.step.tik} 
are proved in Lemma 4 of \cite{Uday}. We proceed with the proof of \eqref{extra.condition}. Set $u_{\max}:=\max_{1\le i\le m}{u_i}$, 
$u_{\min}=\min_{1\le i\le m}{u_i}$ for $u\in\{C,D\}$ and $a:=1/2+\delta$, $b:=1/2-\delta$. The claim is proved by showing that
$$
\frac{(\alpha_{k,\max}-\alpha_{k,\min})^2}{\alpha_{k,\min}\epsilon_{k,\min}}\sim
\frac{\left[\left(k+C_{\min}\right)^{-a}-\left(k+C_{\max}\right)^{-a}\right]^2}{\left(k+C_{\max}\right)^{-a}\left(k+D_{\max}\right)^{-b}}
=\frac{\left[\left(1-\frac{C_{\max}-C_{\min}}{k+C_{\max}}\right)^{-a}-1\right]^2}{\left(k+C_{\max}\right)^{a}\left(k+D_{\max}\right)^{-b}}
$$
$$
=\frac{\left[1+a\frac{C_{\max}-C_{\min}}{k+C_{\max}}+O\left(\frac{1}{k^2}\right)-1\right]^2}{k^{a-b}\left(1
+\frac{C_{\max}}{k}\right)^{a}\left(1+\frac{D_{\max}}{k}\right)^{-b}}\\
=O\left(\frac{1}{k^{2+2\delta}}\right).
$$
\end{proof}

\subsection{Convergence analysis}\label{ss3.4}
We present next our convergence result for method \eqref{algo.tikhonov.approx}-\eqref{algo.tikhonov.approx:eq2}. We shall need two lemmas.

\begin{lemma}[Eventual strong-monotonicity]\label{lema1.tik}
Consider Assumption \ref{mono.lips.unbiased.tik}. Define $H_k:=D(\alpha_k)\cdot(T+D(\epsilon_k))$ and 
$
\sigma_k=\alpha_{k,\min}\epsilon_{k,\min}-
L(\alpha_{k,\max}-\alpha_{k,\min}).
$
Then for all $y,x\in\re^n$ and $k\in\mathbb{N}$,
$
\langle H_k(y)-H_k(x),y-x\rangle\ge\sigma_k\Vert y-x\Vert^2.
$
\end{lemma}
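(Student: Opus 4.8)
The plan is to expand the defining inner product over the Cartesian blocks, extract a genuinely monotone term by forcing a common scalar stepsize, and control the remaining discrepancy by Lipschitz continuity of $T$.

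First I would set $w:=y-x$ and use that $H_k(x)$ has $j$-th block $\alpha_{k,j}\bigl(T_j(x)+\epsilon_{k,j}x_j\bigr)$, together with $\langle x,y\rangle=\sum_{j=1}^m\langle x_j,y_j\rangle$, to write
\[
\langle H_k(y)-H_k(x),y-x\rangle
=\sum_{j=1}^m\alpha_{k,j}\langle T_j(y)-T_j(x),w_j\rangle
+\sum_{j=1}^m\alpha_{k,j}\epsilon_{k,j}\Vert w_j\Vert^2 .
\]
Since $\alpha_{k,j}\epsilon_{k,j}\ge\alpha_{k,\min}\epsilon_{k,\min}>0$ for every $j$, the second sum is at least $\alpha_{k,\min}\epsilon_{k,\min}\Vert w\Vert^2$, which furnishes the positive part of $\sigma_k$.

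For the first sum I would split $\alpha_{k,j}=\alpha_{k,\min}+(\alpha_{k,j}-\alpha_{k,\min})$, noting $0\le\alpha_{k,j}-\alpha_{k,\min}\le\alpha_{k,\max}-\alpha_{k,\min}$. The $\alpha_{k,\min}$-part recombines as $\alpha_{k,\min}\langle T(y)-T(x),y-x\rangle\ge0$ by Assumption \ref{monotonicity}. For each remaining term, Cauchy--Schwarz gives $(\alpha_{k,j}-\alpha_{k,\min})\langle T_j(y)-T_j(x),w_j\rangle\ge-(\alpha_{k,\max}-\alpha_{k,\min})\Vert T_j(y)-T_j(x)\Vert\,\Vert w_j\Vert$; summing over $j$ and applying Cauchy--Schwarz once more in $\re^m$ yields the lower bound $-(\alpha_{k,\max}-\alpha_{k,\min})\Vert T(y)-T(x)\Vert\,\Vert w\Vert\ge-L(\alpha_{k,\max}-\alpha_{k,\min})\Vert w\Vert^2$, using that Assumption \ref{lipschitz}(i) (invoked through Assumption \ref{mono.lips.unbiased.tik}), together with Jensen's inequality, makes $T$ $L$-Lipschitz. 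Adding this to the bound on the second sum gives exactly $\langle H_k(y)-H_k(x),y-x\rangle\ge\bigl(\alpha_{k,\min}\epsilon_{k,\min}-L(\alpha_{k,\max}-\alpha_{k,\min})\bigr)\Vert w\Vert^2=\sigma_k\Vert w\Vert^2$.

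The argument is entirely elementary, so there is no real obstacle; the only subtlety worth noting is that monotonicity is available only for the \emph{joint} operator $T$ and not block by block, which is exactly why one must peel off the common scalar $\alpha_{k,\min}$ first and then dominate the per-agent perturbations by Lipschitz continuity rather than by monotonicity. This also explains the appearance of the term $-L(\alpha_{k,\max}-\alpha_{k,\min})$ in $\sigma_k$, which is a deficiency caused purely by the asynchronous stepsizes and vanishes when $\alpha_{k,1}=\cdots=\alpha_{k,m}$.
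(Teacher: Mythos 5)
Your proof is correct and follows essentially the same route as the paper: the same block decomposition into the $T$-part and the regularization part, the same splitting $\alpha_{k,j}=\alpha_{k,\min}+(\alpha_{k,j}-\alpha_{k,\min})$ to invoke monotonicity of the joint operator $T$, and the same Cauchy--Schwarz/Lipschitz bound on the residual term. No substantive differences to report.
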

\begin{proof}
We consider the decomposition
\begin{equation}
\label{lema1.tik.eq1}
\langle H_k(y)-H_k(x),y-x\rangle =
\langle D(\alpha_k)\cdot(T(y)-T(x)),y-x\rangle+\langle D(\alpha_k)D(\epsilon_k)(y-x),y-x\rangle.
\end{equation}

Concerning the second term  in the right hand side of 
\eqref{lema1.tik.eq1},
if $D_k$ is the diagonal matrix with entries $(\alpha_1\epsilon_1,\ldots,\alpha_m\epsilon_m)$, then
\begin{equation}\label{lema1.tik.eq2}
\langle D(\alpha_k)D(\epsilon_k)(y-x),y-x\rangle=
\langle D_k(y-x),y-x\rangle\ge\alpha_{k,\min}\epsilon_{k,\min}\Vert y-x\Vert^2.
\end{equation}

The first term in the right hand side of \eqref{lema1.tik.eq1} is equal to
\begin{eqnarray}\label{lema1.tik.eq3}
\sum_{i=1}^m\alpha_{k,i}\langle T_i(y)-T_i(x),y_i-x_i\rangle &=&
\alpha_{k,\min}\sum_{i=1}^m\langle T_i(y)-T_i(x),y_i-x_i\rangle\nonumber\\
&&+\sum_{i=1}^m(\alpha_{k,i}-\alpha_{k,\min})\langle T_i(y)-T_i(x),y_i-x_i\rangle.
\end{eqnarray}
The first term in the right hand side of \eqref{lema1.tik.eq3} 
is nonnegative by  monotonicity of $T$. For the second term 
in the right hand side of \eqref{lema1.tik.eq3}, 
we have
\begin{eqnarray}
\sum_{i=1}^m(\alpha_{k,i}-\alpha_{k,\min})\langle T_i(y)-T_i(x),y_i-x_i\rangle &\ge &-\sum_{i=1}^m(\alpha_{k,i}-\alpha_{k,\min})\Vert T_i(y)-T_i(x)\Vert\Vert y_i-x_i\Vert\nonumber\\
&\ge &-(\alpha_{k,\max}-\alpha_{k,\min})\sum_{i=1}^m\Vert T_i(y)-T_i(x)\Vert\Vert y_i-x_i\Vert\nonumber\\
&\ge &-(\alpha_{k,\max}-\alpha_{k,\min})\Vert T(y)-T(x)\Vert\Vert y-x\Vert\nonumber\\
&\ge &-(\alpha_{k,\max}-\alpha_{k,\min})L\Vert y-x\Vert^2,\label{lema1.tik.eq4}
\end{eqnarray}
using Cauchy-Schwartz inequality in the first inequality, H\"older-inequality 
in the third one and Lipschitz continuity of $T$ in the last one. 
The result follows from \eqref{lema1.tik.eq1}-\eqref{lema1.tik.eq4}.
\end{proof}

We will use the following result, proved in Lemma 3 of Koshal et al. \cite{Uday}: 
\begin{lemma}[Properties of the Tykhonov sequence]\label{tykhonov.sequence}
Assume that $X\subset\re^n$ is convex and closed, that the operator $T:\re^n\rightarrow\re^n$ is continuous and monotone over $X$ and  
that Assumption \ref{existence} hold. Assume also that the positive sequences $\{\epsilon_{k,j}\}_{k=-1}^{\infty}$ for $j\in[m]$ 
decrease to $0$ and satisfy $\limsup_{k\rightarrow\infty}\frac{\epsilon_{k,\max}}{\epsilon_{k,\min}}<\infty$, 
with $\epsilon_{k,\max}:=\max_{j\in[m]}\epsilon_{k,j}$ and $\epsilon_{k,\min}:=\min_{j\in[m]}\epsilon_{k,j}$. 
Denote by $t^k$ the solution of \emph{VI}$(T+D(\epsilon_k),X)$. Then 
\begin{itemize}
\item[(i)] $\{t^k\}$ is bounded and all cluster points of $\{t^k\}$ belong to $X^*$.
\item[(ii)] The following inequality holds for all $k\ge1$:
$$
\Vert t^k-t^{k-1}\Vert\le\frac{\epsilon_{k-1,\max}-\epsilon_{k,\min}}{\epsilon_{k,\min}}M_t,
$$
where 
\begin{equation}\label{def:Mt}
M_t:=\sup_{k\in\mathbb{N}_0}\Vert t^k\Vert.
\end{equation}
\item[(iii)] If $\limsup_{k\rightarrow\infty}\frac{\epsilon_{k,\max}}{\epsilon_{k,\min}}\le1$ 
then $\{t^k\}$ converges to the least-norm solution in $X^*$.
\end{itemize}
\end{lemma}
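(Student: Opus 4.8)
The plan is to run the classical Tykhonov regularization argument, taking care of the anisotropic regularizer $D(\epsilon_k)$. I first record two structural facts. Since $T$ is continuous and monotone and $D(\epsilon_k)$ is symmetric with smallest eigenvalue $\epsilon_{k,\min}>0$, the operator $T+D(\epsilon_k)$ is continuous and $\epsilon_{k,\min}$-strongly monotone on the closed convex set $X$, so $\vi(T+D(\epsilon_k),X)$ has a unique solution $t^k$. Moreover, by monotonicity of $T$, the solution set $X^*$ coincides with the Minty solution set and is therefore closed and convex, hence possesses a unique element of minimal Euclidean norm, which I denote $x^\dagger$.

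To prove (i), fix $x^*\in X^*$, insert $x=x^*$ in the variational inequality defining $t^k$ and $x=t^k$ in the one defining $x^*$, and add the two; monotonicity of $T$ kills the operator cross-term and leaves $\langle D(\epsilon_k)t^k,\,t^k-x^*\rangle\le 0$. Rewriting this as $\langle D(\epsilon_k)t^k,t^k\rangle\le\langle D(\epsilon_k)t^k,x^*\rangle$, bounding the left side below by $\epsilon_{k,\min}\Vert t^k\Vert^2$ and the right side above by $\epsilon_{k,\max}\Vert t^k\Vert\,\Vert x^*\Vert$ via Cauchy--Schwarz, yields
\begin{equation*}
\Vert t^k\Vert\le\frac{\epsilon_{k,\max}}{\epsilon_{k,\min}}\Vert x^*\Vert,
\end{equation*}
which is bounded because $\limsup_k\epsilon_{k,\max}/\epsilon_{k,\min}<\infty$; this gives boundedness and well-definedness of $M_t$ in \eqref{def:Mt}. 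For the cluster points, along a subsequence $t^{k_\ell}\to\bar t\in X$, and since $\Vert D(\epsilon_{k_\ell})t^{k_\ell}\Vert\le\epsilon_{k_\ell,\max}\Vert t^{k_\ell}\Vert\to0$ while $T$ is continuous, passing to the limit in $\langle T(t^{k_\ell})+D(\epsilon_{k_\ell})t^{k_\ell},\,x-t^{k_\ell}\rangle\ge0$ gives $\langle T(\bar t),x-\bar t\rangle\ge0$ for all $x\in X$, so $\bar t\in X^*$.

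For (ii), test $\vi(T+D(\epsilon_k),X)$ at $t^{k-1}$ and $\vi(T+D(\epsilon_{k-1}),X)$ at $t^k$, add, and use monotonicity of $T$ to get $\langle D(\epsilon_k)t^k-D(\epsilon_{k-1})t^{k-1},\,t^k-t^{k-1}\rangle\le0$. Split $D(\epsilon_k)t^k-D(\epsilon_{k-1})t^{k-1}=D(\epsilon_k)(t^k-t^{k-1})+(D(\epsilon_k)-D(\epsilon_{k-1}))t^{k-1}$; the coercive piece is bounded below by $\epsilon_{k,\min}\Vert t^k-t^{k-1}\Vert^2$, and the perturbation satisfies $\Vert(D(\epsilon_k)-D(\epsilon_{k-1}))t^{k-1}\Vert\le\max_j(\epsilon_{k-1,j}-\epsilon_{k,j})\,M_t\le(\epsilon_{k-1,\max}-\epsilon_{k,\min})M_t$, where the coordinatewise monotonicity of each $\{\epsilon_{k,j}\}_k$ makes these differences nonnegative. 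Dividing through by $\epsilon_{k,\min}\Vert t^k-t^{k-1}\Vert$ gives the stated estimate. For (iii), specialize the bound of step (i) to $x^*=x^\dagger$ to get $\Vert t^k\Vert\le(\epsilon_{k,\max}/\epsilon_{k,\min})\Vert x^\dagger\Vert$, so $\limsup_k\Vert t^k\Vert\le\Vert x^\dagger\Vert$ by the hypothesis $\limsup_k\epsilon_{k,\max}/\epsilon_{k,\min}\le1$; any cluster point $\bar t$ then has $\Vert\bar t\Vert\le\limsup_k\Vert t^k\Vert\le\Vert x^\dagger\Vert$, and since $\bar t\in X^*$ by (i) while $x^\dagger$ minimizes the norm over $X^*$, strict convexity of the Euclidean norm forces $\bar t=x^\dagger$; boundedness plus a unique cluster point yields $t^k\to x^\dagger$.

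The individual steps are all elementary; the point requiring care is the bookkeeping with the non-uniform regularizer — consistently using $\epsilon_{k,\min}$ to extract strong monotonicity/coercivity and $\epsilon_{k,\max}$ to control perturbations, and invoking the coordinatewise monotonicity of the $\{\epsilon_{k,j}\}$ in step (ii) — together with the reminder that monotonicity of $T$ makes $X^*$ convex, without which ``least-norm solution in $X^*$'' would not be meaningful in (iii).
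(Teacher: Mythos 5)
Your proof is correct. Note that the paper does not prove this lemma itself — it imports it as Lemma 3 of Koshal et al.\ \cite{Uday} — and your argument is the standard Tykhonov regularization proof used there: test the two variational inequalities against each other, cancel the monotone cross-term, and use $\epsilon_{k,\min}$ for coercivity and $\epsilon_{k,\max}$ (or $\epsilon_{k-1,\max}-\epsilon_{k,\min}$) for the perturbation. All the delicate points are handled properly, including the coordinatewise monotonicity of the $\{\epsilon_{k,j}\}$ in part (ii) and the convexity of $X^*$ (via the Minty formulation) needed for the least-norm solution in part (iii) to be well defined.
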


Recalling \eqref{def:B}, we define 
\begin{equation}\label{def:bt}
B_t:=\sup_{k\in\mathbb{N}_0}B(t^k),
\end{equation}
which is a finite quantity, because $\{t^k\}$ is bounded and $B(\cdot)$ is a locally bounded function. 
We also define the following constants for given $\tau>1$:
\begin{equation}\label{convergence:tik:hag}
H_{k,\tau}:=4\left[1+\tau\beta_{k,\max}(2-\beta_{k,\min})\right],\mbox{ }C:=\max_{j\in [m]}c^j,\mbox{ } 
C_g:=\min_{j\in[m]}C_g^j,\mbox{ } G_\tau:=\frac{C C_g^2\tau}{(\tau-1)},\mbox{ } A_{k,\tau}:=\frac{\mathsf{B}_k}{G_\tau}.
\end{equation}
Next we prove the asymptotic convergence of method \eqref{algo.tikhonov.approx}-\eqref{algo.tikhonov.approx:eq2}.

\begin{theorem}[Asymptotic convergence]\label{convergence.tik}
If Assumptions \ref{mono.lips.unbiased.tik}-\ref{approx.step.tik} hold, then the 
method \eqref{algo.tikhonov.approx}-\eqref{algo.tikhonov.approx:eq2} generates a sequence $\{x^k\}$ such that:
\begin{itemize}
\item[(i)] if $\limsup_{k\rightarrow\infty}\frac{\epsilon_{k,\max}}{\epsilon_{k,\min}}<\infty$, 
then almost surely $\{x^k\}$ is bounded and all cluster points of $\{x^k\}$ belong to the solution set $X^*$,
\item[(ii)] if $\limsup_{k\rightarrow\infty}\frac{\epsilon_{k,\max}}{\epsilon_{k,\min}}\le1$, 
then almost surely $\{x^k\}$ converges to the least-norm solution in $X^*$.
\end{itemize}
\end{theorem}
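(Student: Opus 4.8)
The plan is to compare the iterates $\{x^k\}$ with the Tykhonov trajectory $\{t^k\}$, where $t^k$ is the (unique, deterministic) solution of the strongly monotone $\vi(T+D(\epsilon_k),X)$; this $t^k$ exists because $T+D(\epsilon_k)$ is continuous and strongly monotone and $X$ is closed, convex and nonempty, and its asymptotics are furnished by Lemma \ref{tykhonov.sequence}. It suffices to prove that $\Vert x^k-t^k\Vert\to0$ almost surely: then claim (i) follows from Lemma \ref{tykhonov.sequence}(i) (the deterministic sequence $\{t^k\}$ is bounded with all cluster points in $X^*$, and $\Vert x^k-t^k\Vert\to0$ transfers both properties to $\{x^k\}$), and claim (ii) follows from Lemma \ref{tykhonov.sequence}(iii) (then $\{t^k\}$, hence $\{x^k\}$, converges to the least-norm solution of $X^*$).

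First I would establish a one-step recursion for $z_k:=\Vert x^k-t^k\Vert^2$. For each block $j\in[m]$, apply Lemma \ref{lemma:feas:step} on $X_0^j$ (using \eqref{assump:represent:cart}--\eqref{assump:subgrad:bound:cart}) with $x_1:=x_j^k$, $u:=F_j(v_j^k,x^k)+\epsilon_{k,j}x_j^k$, $\alpha:=\alpha_{k,j}$, $\beta:=\beta_{k,j}$, $g:=g_{\omega_{k,j}}(j|\cdot)$ and $x_0:=t_j^k$ (admissible since $t^k\in X\subset X_0=\prod_j X_0^j$ forces $g^+_{\omega_{k,j}}(j|t_j^k)=0$, and $x_j^k\in X_0^j$ for $k\ge1$). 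Summing over $j$ bounds $\Vert x^{k+1}-t^k\Vert^2$ by terms whose main ingredients are $-2\langle x^k-t^k,D(\alpha_k)(F(v^k,x^k)+D(\epsilon_k)x^k)\rangle$, a quadratic term $\lesssim H_{k,\tau}\alpha_{k,\max}^2\Vert F(v^k,x^k)+D(\epsilon_k)x^k\Vert^2$, and the negative feasibility term $-\sum_j\beta_{k,j}(2-\beta_{k,j})(C_g^j)^{-2}(1-1/\tau)[g^+_{\omega_{k,j}}(j|x_j^k)]^2$. Taking $\esp[\cdot\mid\alg_k]$: Assumption \ref{unbiased} turns the first term into $-2\langle x^k-t^k,H_k(x^k)\rangle$; Lemma \ref{l1} gives $\esp[\Vert F(v^k,x^k)\Vert^2\mid\alg_k]\le2L^2z_k+2B_t^2$, which together with $\Vert x^k\Vert^2\le2z_k+2M_t^2$ makes the quadratic term $\le H_{k,\tau}\alpha_{k,\max}^2\cdot O(z_k+1)$ with $z_k$-coefficient $o(\sigma_k)$ by Assumption \ref{approx.step.tik}(ii); and Assumption \ref{assump:constraint:reg:cart} turns the feasibility term into $\le-A_{k,\tau}\dist(x^k,X)^2$.

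Next I would control $\langle x^k-t^k,H_k(x^k)\rangle$. Writing $H_k(x^k)=[H_k(x^k)-H_k(t^k)]+H_k(t^k)$, Lemma \ref{lema1.tik} gives $\langle x^k-t^k,H_k(x^k)-H_k(t^k)\rangle\ge\sigma_k z_k$ with $\sigma_k=\alpha_{k,\min}\epsilon_{k,\min}-L\Delta_k>0$ for $k$ large (Assumption \ref{approx.step.tik}(ii)). Since $x^k$ lies only in $X_0$, I split $x^k-t^k=(\Pi_X(x^k)-t^k)+(x^k-\Pi_X(x^k))$: the $\vi(T+D(\epsilon_k),X)$ inequality for $t^k$ renders the $\alpha_{k,\min}$-weighted $(\Pi_X(x^k)-t^k)$-part nonnegative, while the remaining pieces are $\gtrsim-\alpha_{k,\min}\dist(x^k,X)-\Delta_k\Vert x^k-t^k\Vert$ times $\Vert T(t^k)+D(\epsilon_k)t^k\Vert\le B_t+\epsilon_{k,\max}M_t$. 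Collecting everything yields, for $k$ large,
\[
\esp[z_{k+1}'\mid\alg_k]\le(1-2\sigma_k+o(\sigma_k))z_k-A_{k,\tau}\dist(x^k,X)^2+c_1\Delta_k\sqrt{z_k}+c_2\alpha_{k,\min}\dist(x^k,X)+r_k,
\]
where $z_{k+1}':=\Vert x^{k+1}-t^k\Vert^2$, the $c_i$ are fixed constants, and $\sum_k r_k<\infty$ by Assumption \ref{approx.step.tik}(iv). Absorbing the two cross terms by Young's inequality (spending $\tfrac{\sigma_k}{2}z_k$ on the first and $\tfrac{A_{k,\tau}}{2}\dist(x^k,X)^2$ on the second) leaves extra additive terms $\lesssim\tfrac{\Delta_k^2}{\sigma_k}$ (summable by \eqref{extra.condition}, since $\sigma_k\sim\alpha_{k,\min}\epsilon_{k,\min}$) and $\lesssim\tfrac{\alpha_{k,\max}^2}{\mathsf{B}_k}$ (summable by Assumption \ref{approx.step.tik}(iv)), giving $\esp[z_{k+1}'\mid\alg_k]\le(1-\sigma_k)z_k-\tfrac{A_{k,\tau}}{2}\dist(x^k,X)^2+b_k'$ with $\sum_k b_k'<\infty$.

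Finally I would pass from $t^k$ to $t^{k+1}$: with $\mu_{k+1}:=\tfrac{\Gamma_{k+1}}{\epsilon_{k+1,\min}}M_t\ge\Vert t^{k+1}-t^k\Vert$ (Lemma \ref{tykhonov.sequence}(ii)) and $2\Vert x^{k+1}-t^k\Vert\mu_{k+1}\le\tfrac{\sigma_k}{4}z_{k+1}'+\tfrac{4}{\sigma_k}\mu_{k+1}^2$, dropping the nonpositive feasibility term, one obtains
\[
\esp[z_{k+1}\mid\alg_k]\le\Big(1-\tfrac34\sigma_k\Big)z_k+b_k'',\qquad b_k''=2b_k'+\mu_{k+1}^2+\tfrac{4}{\sigma_k}\mu_{k+1}^2 .
\]
Because $\sigma_k\sim\alpha_{k,\min}\epsilon_{k,\min}$, Assumption \ref{approx.step.tik} gives $\tfrac34\sigma_k\in[0,1]$ eventually (item (ii)), $\sum_k\sigma_k=\infty$ (item (iii)), $\sum_k b_k''<\infty$ (items (iv)--(v)), and $b_k''/\sigma_k\to0$ (item (ii) for the $b_k'$ part, item (v) for both $\mu_{k+1}^2/\sigma_k$ and $\mu_{k+1}^2/\sigma_k^2$). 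Theorem \ref{rob2} then yields $z_k\to0$ a.s., and invoking Lemma \ref{tykhonov.sequence}(i) resp.\ (iii) completes (i) resp.\ (ii). The main obstacle is the recursion bookkeeping: since the iterates are feasible only for the hard constraint $X_0$, the term $\langle x^k-t^k,H_k(t^k)\rangle$ cannot be discarded and forces $\dist(x^k,X)$ contributions that must be reabsorbed by the feasibility decrease $-A_{k,\tau}\dist(x^k,X)^2$; and one must track precisely which of the delicately coordinated conditions of Assumption \ref{approx.step.tik} — notably the new \eqref{extra.condition} from asynchronous stepsizes and items (iv)--(v) from the Tykhonov shift — makes each residual term summable and $o(\sigma_k)$, which is exactly what Theorem \ref{rob2} demands.
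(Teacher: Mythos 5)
Your proposal is correct and follows essentially the same route as the paper: per-block application of Lemma \ref{lemma:feas:step}, the eventual strong monotonicity of $H_k$ from Lemma \ref{lema1.tik}, absorption of the $\dist(x^k,X)$ cross terms into the feasibility decrease supplied by Assumption \ref{assump:constraint:reg:cart}, the Tykhonov shift via Lemma \ref{tykhonov.sequence}(ii), and Theorem \ref{rob2} to conclude $\Vert x^k-t^k\Vert\to0$. The only (cosmetic) differences are that you specialize to $x:=t^k$ from the outset rather than first deriving the recursion for general $x\in X$, and you perform the $t^k\to t^{k+1}$ shift at the end of the step rather than the $t^{k-1}\to t^k$ shift at the beginning, which is why the paper's summability conditions appear there with exactly matching indices.
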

\begin{proof}
In the sequel we denote by $\{t^k\}$ the Tykhonov sequence of Lemma \ref{tykhonov.sequence}. Let $x=(x_j)_{j=1}^m\in X$. 
We claim that for all $\tau>1$, $j\in[m]$, $k\in\mathbb{N}$, 
$$	
\Vert x^{k+1}_j-x_j\Vert^2\le\Vert x^k_j-x_j\Vert^2-2\alpha_{k,j}\langle x^k_j-x_j,F_j(v^k_j,x^k)+\epsilon_{k,j}x^k_j\rangle+
$$
\begin{equation}\label{lemma:recursion:tyk:eq1}
\left[1+\tau\beta_{k,j}(2-\beta_{k,j})\right]\alpha_{k,j}^2\left\Vert F_j(v^k_j,x^k)+
\epsilon_{k,j}x^k_j\right\Vert^2-
\frac{\beta_{k,j}(2-\beta_{k,j})}{(C_g^j)^2}\left(1-\frac{1}{\tau}\right)\left(g^+_{\omega_{k,j}}(j|x^k_j)\right)^2.
\end{equation}	
Indeed, in view of \eqref{algo.tikhonov.approx}-\eqref{algo.tikhonov.approx:eq2} and  
$x_j\in X^j\subset X_0^j\cap X_{\omega_{k,j}}^j$, we can invoke Lemma \ref{lemma:feas:step} 
with $g:=g_{\omega_{k,j}}(j|\cdot)$, $x_1:=x^k_j$, $x_2:=x^{k+1}_j$, $x_0:=x_j$, 
$\alpha:=\alpha_{k,j}$, $u:=F_j(v^k_j,x^k)+\epsilon_{k,j}x^k_j$, $y:=y^k_j$, $\beta:=\beta_{k,j}$ 
and $d:=d^k_j$ obtaining \eqref{lemma:recursion:tyk:eq1}.

We define for $j\in[m]$,
\begin{equation}\label{equation:def:varsigma:z}
z^k_j:=x^k_j-\alpha_{k,j}(F_j(v^k_j,x^k)+\epsilon_{k,j}x^k_j),
\end{equation}
with $z^k:=(z_j^k)_{j=1}^m$. We use the definitions in \eqref{convergence:tik:hag} and sum the inequalities 
in \eqref{lemma:recursion:tyk:eq1}  with $j$ between $1$ and $m$, getting
$$
\Vert x^{k+1}-x\Vert^2\le\Vert x^k-x\Vert^2+2\sum_{j=1}^m\langle x_j-x^k_j,x^k_j-z^k_j\rangle+
$$
\begin{equation}\label{eq3.2.tik}
\left[1+\tau\beta_{k,\max}(2-\beta_{k,\min})\right]\left\Vert z^k-
x^k\right\Vert^2-
\frac{\beta_{k,\min}(2-\beta_{k,\max})}{C_g^2}\left(1-\frac{1}{\tau}\right)\sum_{j=1}^m\left(g^+_{\omega_{k,j}}(j|x^k_j)\right)^2.
\end{equation}

Concerning the second term in the right hand side of \eqref{eq3.2.tik}, we have
\begin{equation}\label{lema.eq4.tik}
\langle x_j-x_j^k,x_j^k-z_j^k\rangle=
\alpha_{k,j}\langle x_j-x_j^k,F_j(v^k_j,x^k)+\epsilon_{k,j} x_j^k\rangle=
\alpha_{k,j}\langle x_j-x_j^k,T_j(x^k)+\varsigma_j^k+\epsilon_{k,j} x_j^k\rangle,
\end{equation}
using the definitions in \eqref{def:varsigma} and \eqref{equation:def:varsigma:z}.

We now analyze the third term in the right hand side of \eqref{eq3.2.tik}. 
The triangular inequality and the inequality $(\sum_{i=1}^4 a_i)^2\le4\sum_{i=1}^4a_i^2$ imply that
\begin{eqnarray}
\Vert z_j^k-x_j^k\Vert^2 &=&\alpha_{k,j}^2\Vert F_j(v^k_j,x^k)+\epsilon_{k,j}x_j^k\Vert^2\nonumber\\
&= &\alpha_{k,j}^2\Vert F_j(v^k_j,x^k)-F_j(v^k_j,x)+\epsilon_{k,j}(x_j^k-x_j)+F_j(v^k_j,x)+\epsilon_{k,j} x_j\Vert^2\nonumber\\
&\le &4\alpha_{k,j}^2\Vert F_j(v^k_j,x^k)-F_j(v^k_j,x)\Vert^2+4\alpha_{k,j}^2\epsilon_{k,j}^2\Vert x_j^k-x_j\Vert^2\nonumber\\
&&+4\alpha_{k,j}^2\Vert F_j(v^k_j,x)\Vert^2+4\alpha_{k,j}^2\epsilon_{k,j}^2\Vert x_j\Vert^2\nonumber\\
&\le &4\alpha_{k,\max}^2\Vert F_j(v^k_j,x^k)-F_j(v^k_j,x)\Vert^2+4\alpha_{k,\max}^2\epsilon_{k,\max}^2\Vert x_j^k-x_j\Vert^2\nonumber\\
&&+4\alpha_{k,\max}^2\Vert F_j(v^k_j,x)\Vert^2+4\alpha_{k,\max}^2\epsilon_{k,\max}^2\Vert x_j\Vert^2.\label{lema.eq5.tik}
\end{eqnarray}
Summing the inequalities in \eqref{lema.eq5.tik} with  $j$ between $1$ and $m$, we get 
from Assumption \ref{lipschitz}(i),
\begin{eqnarray}
\Vert z^k-x^k\Vert^2 &=
&\sum_{j=1}^m\Vert z_j^k-x_j^k\Vert^2\nonumber\\
&\le &4\alpha_{k,\max}^2\Vert F(v^k,x^k)-F(v^k,x)\Vert^2+
4\alpha_{k,\max}^2\epsilon_{k,\max}^2\Vert x^k-x\Vert^2\nonumber\\
&&+4\alpha_{k,\max}^2\Vert F(v^k,x)\Vert^2+
4\alpha_{k,\max}^2\epsilon_{k,\max}^2\Vert x\Vert^2\nonumber\\
&\le &4L(v^k)^2\alpha_{k,\max}^2\Vert x^k-x\Vert^2+
4\alpha_{k,\max}^2\epsilon_{k,\max}^2\Vert x^k-x\Vert^2\nonumber\\
&&+4\alpha_{k,\max}^2\Vert F(v^k,x)\Vert^2+
4\alpha_{k,\max}^2\epsilon_{k,\max}^2\Vert x\Vert^2.\label{lema.eq6.tik}	
\end{eqnarray}

Now we combine \eqref{eq3.2.tik}-\eqref{lema.eq6.tik}, in order to obtain
$$
\Vert x^{k+1}-x\Vert^2\le 
\big[1+H_{k,\tau}(L(v^k)^2+\epsilon_{k,\max}^2)\alpha_{k,\max}^2\big]\Vert x^{k}-x\Vert^2+
2\sum_{j=1}^m\alpha_{k,j}\langle x_j-x_j^k,T_j(x^k)+\epsilon_{k,j} x_j^k\rangle
$$
\begin{equation}\label{eq7.tik}
+2\sum_{j=1}^m\alpha_{k,j}\langle x_j-x_j^k,\varsigma_j^k\rangle+
H_{k,\tau}(\Vert F(v^k,x)\Vert^2+
\Vert x\Vert^2\epsilon_{k,\max}^2)\alpha_{k,\max}^2-CA_{k,\tau}\sum_{j=1}^m\left(g^+_{\omega_{k,j}}(j|x^k_j)\right)^2,
\end{equation}
where $H_{k,\tau}, A_{k,\tau}$ and $G_\tau$ are defined as in \eqref{convergence:tik:hag}.

The sum in the second term of the right hand side of \eqref{eq7.tik} is equal to
$$
\langle D(\alpha_k)\cdot(T+D(\epsilon_k))(x^k),x-x^k\rangle
=\langle D(\alpha_k)\cdot(T+D(\epsilon_k))(x^k)
-D(\alpha_k)\cdot(T+D(\epsilon_k))(x),x-x^k\rangle
$$
\begin{equation}\label{eq8.tik}
+\langle D(\alpha_k)\cdot(T+D(\epsilon_k))(x),x-\Pi(x^k)\rangle
+\langle D(\alpha_k)\cdot(T+D(\epsilon_k))(x),\Pi(x^k)-x^k\rangle.
\end{equation}
Recalling the  definition of $\Delta_k$ in Assumption \ref{approx.step.tik}, it follows from Lemma \ref{lema1.tik} 
that the first term in the right hand side of \eqref{eq8.tik} satisfies
\begin{equation}\label{term1.tik}
\langle D(\alpha_k)\cdot(T+D(\epsilon_k))(x^k)-D(\alpha_k)\cdot(T+D(\epsilon_k))(x),x-x^k\rangle\le
-(\alpha_{k,\min}\epsilon_{k,\min}-L\Delta_k)\Vert x^k-x\Vert^2.
\end{equation}
The second term in the right hand side of \eqref{eq8.tik} is equal to
$$
\sum_{j=1}^m\alpha_{k,j}\langle T_j(x)+\epsilon_{k,j}x_j,x_j-\Pi_{X^j}(x^k_j)\rangle =
\alpha_{k,\max}\sum_{j=1}^m\langle T_j(x)+\epsilon_{k,j}x_j,x_j-\Pi_{X^j}(x^k_j)\rangle
$$
\begin{equation}\label{term2.tik}
+\sum_{j=1}^m(\alpha_{k,j}-\alpha_{k,\max})\langle T_j(x)+\epsilon_{k,j}x_j,x_j-\Pi_{X^j}(x^k_j)\rangle.
\end{equation}
The first term in the right hand side of \eqref{term2.tik} equals 
\begin{equation}\label{term2.tik.eq1}
\alpha_{k,\max}\sum_{j=1}^m\langle T_j(x)+\epsilon_{k,j}x_j,x_j-\Pi_{X^j}(x^k_j)\rangle=\alpha_{k,\max}
\langle(T+D(\epsilon_k))(x),x-\Pi(x^k)\rangle.
\end{equation}

Regarding the second term in the right hand side of \eqref{term2.tik}, we 
use $\Pi_{X^j}(x_j)=x_j$, so that for each $\mu\in(0,1)$ we have
\begin{eqnarray}
&&\sum_{j=1}^m(\alpha_{k,j}-\alpha_{k,\max})\langle T_j(x)+\epsilon_{k,j}x_j,x_j-\Pi_{X^j}(x^k_j)\rangle\nonumber\\ 
&\le &\sum_{j=1}^m(\alpha_{k,\max}-\alpha_{k,j})\Vert T_j(x)+
\epsilon_{k,j}x_j\Vert\Vert \Pi_{X^j}(x_j)-\Pi_{X^j}(x^k_j)\Vert\nonumber\\
&\le &\Delta_k\sum_{j=1}^m(\Vert T_j(x)\Vert+\epsilon_{k,j}\Vert x_j\Vert)\Vert x_j-x^k_j\Vert\nonumber\\
&\le &\Delta_k(B(x)+\epsilon_{k,\max}\Vert x\Vert)\Vert x-x^k\Vert\nonumber\\
&=&2\frac{(B(x)+\epsilon_{k,\max}\Vert x\Vert)\Delta_k}{2\sqrt{\mu\alpha_{k,\min}\epsilon_{k,\min}}}
\cdot\sqrt{\mu\alpha_{k,\min}\epsilon_{k,\min}}\Vert x-x^k\Vert\nonumber\\
&\le &\frac{(B(x)+\epsilon_{k,\max}\Vert x\Vert)^2\Delta_k^2}{4\mu\alpha_{k,\min}\epsilon_{k,\min}}+
\mu\alpha_{k,\min}\epsilon_{k,\min}\Vert x-x^k\Vert^2,\label{term2.tik.eq2}
\end{eqnarray}
using Cauchy-Schwartz inequality in the first inequality, Lemma \ref{proj}(ii) for $\Pi_{X^i}$ in the second one, 
the fact that $\Vert T(x)\Vert\le B(x)$ in the third one, and the relation $2ab=-(a-b)^2+a^2+b^2$ in the fourth one. 
Putting together \eqref{term2.tik}-\eqref{term2.tik.eq2}, we finally get that the second term in 
the right hand side of \eqref{eq8.tik} is bounded by
\begin{eqnarray}
\langle D(\alpha_k)\cdot(T+D(\epsilon_k))(x),x-\Pi(x^k)\rangle &\le & \alpha_{k,\max}
\langle(T+D(\epsilon_k))(x),x-\Pi(x^k)\rangle\nonumber\\
&&+\frac{(B(x)+\epsilon_{k,\max}\Vert x\Vert)^2\Delta_k^2}{4\mu\alpha_{k,\min}\epsilon_{k,\min}}+
\mu\alpha_{k,\min}\epsilon_{k,\min}\Vert x^k-x\Vert^2.\label{term2.tik.eq3}
\end{eqnarray}
For  the third term in the right hand side of \eqref{eq8.tik}, we have
\begin{eqnarray}\label{term3.tik}
\langle D(\alpha_k)\cdot(T+D(\epsilon_k))(x),\Pi(x^k)-x^k\rangle &\le &
\Vert D(\alpha_k)\Vert\Vert T(x)+\epsilon_k x\Vert\Vert\Pi(x^k)-x^k\Vert\nonumber\\
&\le &\alpha_{k,\max}(B(x)+\epsilon_{k,\max}\Vert x\Vert)\dist(x^k,X).
\end{eqnarray}
Combining \eqref{term1.tik}, \eqref{term2.tik.eq3} and \eqref{term3.tik} with \eqref{eq8.tik}, we obtain
$$
2\langle D(\alpha_k)\cdot(T+D(\epsilon_k))(x^k),x-x^k\rangle\le
\Big[-2(1-\mu)\alpha_{k,\min}\epsilon_{k,\min}+2L\Delta_k\Big]\Vert x^k-x\Vert^2
$$
$$
+2\alpha_{k,\max}\langle(T+D(\epsilon_k))(x),x-\Pi(x^k)\rangle
$$
\begin{equation}\label{eq9.tik}
+\frac{(B(x)+\epsilon_{k,\max}\Vert x\Vert)^2\Delta_k^2}{2\mu\alpha_{k,\min}\epsilon_{k,\min}}
+2\alpha_{k,\max}(B(x)+\epsilon_{k,\max}\Vert x\Vert)\dist(x^k,X).
\end{equation}
We use \eqref{eq9.tik} in \eqref{eq7.tik} and finally get the following recursive relation: for all $k\in\mathbb{N}_0$ and $x\in X$,
$$
\Vert x^{k+1}-x\Vert^2 \le 
q_{k,\tau,\mu}(L(v^k))\Vert x^{k}-x\Vert^2
+H_{k,\tau}(\Vert F(v^k,x)\Vert^2+\Vert x\Vert^2\epsilon_{k,\max}^2)\alpha_{k,\max}^2
$$
$$
+\frac{(B(x)+\epsilon_{k,\max}\Vert x\Vert)^2\Delta_k^2}{2\mu\alpha_{k,\min}\epsilon_{k,\min}}
+2\alpha_{k,\max}\langle(T+D(\epsilon_k))(x),x-\Pi(x^k)\rangle
+2\langle x-x^k,D(\alpha_k)\varsigma^k\rangle
$$
\begin{equation}\label{eq10.tik}
+2\alpha_{k,\max}(B(x)+\epsilon_{k,\max}\Vert x\Vert)\dist(x^k,X)-CA_{k,\tau}\sum_{j=1}^m\left(g^+_{\omega_{k,j}}(j|x^k_j)\right)^2,
\end{equation}
where for $R>0$ we define:
\begin{equation}\label{qk}
q_{k,\tau,\mu}(R):=1-2(1-\mu)\alpha_{k,\min}\epsilon_{k,\min}+H_{k,\tau}(R^2+\epsilon_{k,\max}^2)\alpha_{k,\max}^2+2L\Delta_k. 
\end{equation}
In the sequel we specify $x:=t^k$ and take $\esp[\cdot|\alg_k]$ in \eqref{qk}, getting
$$
\esp\left[\Vert x^{k+1}-t^k\Vert^2|\alg_k\right] \le 
q_{k,\tau,\mu}(L)\Vert x^{k}-t^k\Vert^2
+H_{k,\tau}(2B_t^2+M_t^2\epsilon_{k,\max}^2)\alpha_{k,\max}^2
$$
$$
+\frac{(B_t+\epsilon_{k,\max}M_t)^2\Delta_k^2}{2\mu\alpha_{k,\min}\epsilon_{k,\min}}
+2\alpha_{k,\max}\langle(T+D(\epsilon_k))(t^k),t^k-\Pi(x^k)\rangle
$$
\begin{equation}\label{equation:recursion:tyk}
+2\alpha_{k,\max}(B_t+
\epsilon_{k,\max}M_t)\dist(x^k,X)-C A_{k,\tau}\sum_{j=1}^m\esp\left[\left(g^+_{\omega_{k,j}}(j|x^k_j)\right)^2\Big|\alg_k\right].
\end{equation}
For deriving \eqref{equation:recursion:tyk}, we use the facts that 
$x^k\in\alg_k$, $\esp[L(v^k)^2|\alg_k]=L^2$ and $\esp[\Vert F(v^k,t^k)\Vert^2|\alg_k]=2B(t^k)^2\le 2B_t^2$, 
which hold because $\{v^k\}$ is independent of $\alg_k$ and identically distributed to $v$, $\Vert t^k\Vert\le M_t$ and 
$$
\esp\left[2\langle t^k-x^k,D(\alpha_k)\varsigma^k\rangle\Big|\alg_k\right]=2\langle t^k-x^k,D(\alpha_k)\esp\left[\varsigma^k|\alg_k\right]\rangle=0,
$$
in view of the fct that $\esp\left[\varsigma^k|\alg_k\right]=0$.

Using the definition of $C$ in \eqref{convergence:tik:hag}, we get from Assumption \ref{assump:constraint:reg:cart} and 
the fact that $x_j^k\in\alg_k$:  
\begin{equation}\label{equation:reg:mean}
\sum_{j=1}^m\esp\left[\left(g^+_{\omega_{k,j}}(j|x^k_j)\right)^2\Big|\alg_k\right]
\ge \sum_{j=1}^m \frac{1}{c^j}\Vert\Pi_{X^j}(x_j^k)-x_j^k\Vert^2
\ge \frac{1}{C}\sum_{j=1}^m\Vert\Pi_{X^j}(x_j^k)-x_j^k\Vert^2
=\frac{1}{C}\dist(x^k,X)^2.
\end{equation}
By \eqref{equation:reg:mean}, the last term in the right hand side of \eqref{equation:recursion:tyk} is bounded by
\begin{equation}\label{eee2}
-A_{k,\tau}\dist(x^k,X)^2+2(B_t+\epsilon_{k,\max} M_t)\alpha_{k,\max}\dist(x^k,X)\le 
\frac{(B_t+\epsilon_{k,\max} M_t)^2\alpha_{k,\max}^2}{A_{k,\tau}},
\end{equation}
using the fact that $2ab\le\lambda a^2+\frac{b^2}{\lambda}$ with $\lambda:=A_{k,\tau}$, 
$a:=\dist(x^k,X)$ and $b:=(B_t+\epsilon_{k,\max} M_t)\alpha_{k,\max}$.

Since $t^k$ solves VI$(T+D(\epsilon_k),X)$, we have
\begin{equation}\label{equation:def:tyk:seq}
2\alpha_{k,\max}\langle(T+D(\epsilon_k))(t^k),t^k-\Pi(t^k)\rangle\le0.
\end{equation}

Next we relate $\Vert x^k-t^k\Vert^2$ to $\Vert x^k-t^{k-1}\Vert^2$, using the properties of $t^k$ (Lemma \ref{tykhonov.sequence}). We have
$$
\Vert x^k-t^k\Vert^2 \le (\Vert x^k-t^{k-1}\Vert+\Vert t^k-t^{k-1}\Vert)^2
=\Vert x^k-t^{k-1}\Vert^2+\Vert t^k-t^{k-1}\Vert^2+2\Vert x^k-t^{k-1}\Vert\Vert t^k-t^{k-1}\Vert
$$
\begin{equation}\label{eee3}
\le \Vert x^k-t^{k-1}\Vert^2+\Big(M_t\frac{\epsilon_{k-1,\max}-\epsilon_{k,\min}}{\epsilon_{k,\min}}\Big)^2+
2M_t\frac{\epsilon_{k-1,\max}-\epsilon_{k,\min}}{\epsilon_{k,\min}}\Vert x^k-t^{k-1}\Vert.
\end{equation}
Using the  relation $2ab\le\lambda a^2+\frac{b^2}{\lambda}$ for any $\lambda>0$, the last term in the rightmost expression in \eqref{eee3} 
can be estimated as
\begin{eqnarray}\label{eee4}
2M_t\frac{\epsilon_{k-1,\max}-\epsilon_{k,\min}}{\epsilon_{k,\min}}\Vert x^k-t^{k-1}\Vert &=&
2\sqrt{\alpha_{k,\min}\epsilon_{k,\min}}\Vert x^k-t^{k-1}\Vert\cdot M_t\frac{\epsilon_{k-1,\max}-
\epsilon_{k,\min}}{\sqrt{\alpha_{k,\min}\epsilon_{k,\min}}\epsilon_{k,\min}}\nonumber\\
&\le &\alpha_{k,\min}\epsilon_{k,\min}\Vert x^k-t^{k-1}\Vert^2+
M_t^2\frac{\big(\epsilon_{k-1,\max}-\epsilon_{k,\min})^2}
{\alpha_{k,\min}\epsilon_{k,\min}^3}.
\end{eqnarray}
Putting \eqref{eee4} in \eqref{eee3} yields 
\begin{equation}\label{eq12.tik}
\Vert x^k-t^k\Vert^2 \le (1+\alpha_{k,\min}\epsilon_{k,\min})\Vert x^k-t^{k-1}\Vert^2+
\Big(M_t\frac{\epsilon_{k-1,\max}-\epsilon_{k,\min}}{\epsilon_{k,\min}}\Big)^2\Big(1+\frac{1}{\alpha_{k,\min}\epsilon_{k,\min}}\Big).
\end{equation}

Combining \eqref{equation:recursion:tyk}, \eqref{equation:reg:mean}-\eqref{eee2}, \eqref{equation:def:tyk:seq} and \eqref{eq12.tik} we get
$$
\esp\big[\Vert x^{k+1}-t^k\Vert^2\big|\alg_k\big] \le 
q_{k,\tau,\mu}(L)(1+\alpha_{k,\min}\epsilon_{k,\min})\Vert x^{k}-t^{k-1}\Vert^2
$$
$$
+\Bigg[H_{k,\tau}(2B_t^2+M_t^2\epsilon_{k,\max}^2)+\frac{(B_t+M_t\epsilon_{k,\max})^2}{A_{k,\tau}}\Bigg]\alpha_{k,\max}^2
+\frac{(B_t+\epsilon_{k,\max}M_t)^2\Delta_k^2}{2\mu\alpha_{k,\min}\epsilon_{k,\min}}
$$
\begin{equation}\label{eq13.tik}
+q_{k,\tau,\mu}(L)\Big(M_t\frac{\epsilon_{k-1,\max}-\epsilon_{k,\min}}{\epsilon_{k,\min}}\Big)^2\Big(1+\frac{1}{\alpha_{k,\min}\epsilon_{k,\min}}\Big).
\end{equation}

We now estimate the coefficient $q_{k,\tau,\mu}(L)(1+\alpha_{k,\min}\epsilon_{k,\min})$ in \eqref{eq13.tik}. 
In view of \eqref{qk}, we have
\begin{equation}\label{qk.eq2}
q_{k,\tau,\mu}(L)=1-\alpha_{k,\min}\epsilon_{k,\min}\Bigg( 2-2\mu-\frac{H_{k,\tau}(L^2+
\epsilon_{k,\max}^2)\alpha_{k,\max}^2}{\alpha_{k,\min}\epsilon_{k,\min}}
-\frac{2L\Delta_k}{\alpha_{k,\min}\epsilon_{k,\min}}\Bigg).
\end{equation}
Assumption \ref{approx.step.tik}(ii) and 
$0<H_{k,\tau}=4[1+\beta_{k,\min}(2-\beta_{k,\max})\tau]\le4(1+\tau)$ guarantee that
$$
\frac{H_{k,\tau}(L^2+\epsilon_{k,\max}^2)\alpha_{k,\max}^2}{\alpha_{k,\min}\epsilon_{k,\min}}
+\frac{2L\Delta_k}{\alpha_{k,\min}\epsilon_{k,\min}}
\rightarrow 0.
$$
Since $\mu\in(0,1)$ is arbitrary, we can ensure the existence of $d\in(0,1)$ such that 
\begin{equation}\label{c}
c_k:=2\mu+\frac{H_{k,\tau}(L^2+\epsilon_{k,\max}^2)\alpha_{k,\max}^2}{\alpha_{k,\min}\epsilon_{k,\min}}
+\frac{2L\Delta_k}{\alpha_{k,\min}\epsilon_{k,\min}}<d
\end{equation}
for all sufficiently large $k$. 
Next we show that $q_{k,\tau,\mu}(L)\in(0,1)$ for large $k$. 
Indeed, from \eqref{c} and $d\in(0,1)$ we have 
that $1<2-c_k<2$ 
for large enough $k$, 
so that we obtain from \eqref{qk.eq2}, 
\begin{equation}\label{eee5}
1-2\alpha_{k,\min}\epsilon_{k,\min}<q_{k,\tau,\mu}(L)<1-\alpha_{k,\min}\epsilon_{k,\min}.
\end{equation}
Finally, 
$\lim_{k\rightarrow\infty}\alpha_{k,\min}\epsilon_{k,\min}=0$ 
by Assumption \ref{approx.step.tik}(ii), 
so that \eqref{eee5} implies that 
$q_{k,\tau,\mu}(L)\in(0,1)$ 
for sufficiently large $k$. 
Using this fact and \eqref{c} we get the following estimate:
\begin{eqnarray}
0<q_{k,\tau,\mu}(L)(1+\alpha_{k,\min}\epsilon_{k,\min})&\le &q_{k,\tau,\mu}(L)+\alpha_{k,\min}\epsilon_{k,\min}\nonumber\\
&=&1-\alpha_{k,\min}\epsilon_{k,\min}(2-c_k)+\alpha_{k,\min}\epsilon_{k,\min}\nonumber\\
&=&1-\alpha_{k,\min}\epsilon_{k,\min}(1-c_k)\nonumber\\
&\le &1-\alpha_{k,\min}\epsilon_{k,\min}(1-d),\label{qk.eq3}
\end{eqnarray}
using \eqref{c} in the last inequality. 

Combining \eqref{eq13.tik}, \eqref{qk.eq3} and $A_{k,\tau}=\beta_{k,\min}(2-\beta_{k,\max})G_\tau^{-1}$, we obtain 
\begin{equation}\label{eq14.tik}
\esp\big[\Vert x^{k+1}-t^k\Vert^2\big|\alg_k\big] \le 
(1-a_k)\Vert x^{k}-t^{k-1}\Vert^2+b_k,
\end{equation}
for all sufficiently large $k$,
with $a_k:=\alpha_{k,\min}\epsilon_{k,\min}(1-d)$ and
$$
b_k:=\Bigg[H_{k,\tau}(2B_t^2+M_t^2\epsilon_{k,\max}^2)+
\frac{G_\tau(B_t+M_t\epsilon_{k,\max})^2}{\beta_{k,\min}(2-\beta_{k,\max})}\Bigg]\alpha_{k,\max}^2
$$
\begin{equation}\label{bk}
+\frac{(B_t+\epsilon_{k,\max}M_t)^2\Delta_k^2}{2\mu\alpha_{k,\min}\epsilon_{k,\min}}
+q_{k,\tau,\mu}(L)\Big(M_t\frac{\epsilon_{k-1,\max}-\epsilon_{k,\min}}{\epsilon_{k,\min}}\Big)^2\Big(1+\frac{1}{\alpha_{k,\min}\epsilon_{k,\min}}\Big).
\end{equation}
From \eqref{qk.eq3} and $d\in(0,1)$, 
we conclude that $a_k\in[0,1]$, while from Assumption \ref{approx.step.tik}(iii) we have that $\sum_k a_k=\infty$. 
From Assumption \ref{approx.step.tik}(iv) and \eqref{bk}, we also get that $\sum_k b_k<\infty$. 
Finally, using the definitions of $\Gamma_k$ and $\mathsf{B}_k$, we obtain from \eqref{bk}: 
$$	
0\le\frac{b_k}{a_k}=C_1\frac{\alpha_{k,\max}^2}{\alpha_{k,\min}\epsilon_{k,\min}}
+C_2\frac{\alpha_{k,\max}^2}{\mathsf{B}_k\alpha_{k,\min}\epsilon_{k,\min}}
+C_3\Bigg(\frac{\Delta_k}{\alpha_{k,\min}\epsilon_{k,\min}}\Bigg)^2
+C_4\frac{\Gamma_k^2}{\epsilon_{k,\min}^3\alpha_{k,\min}}\Big(1+\frac{1}{\alpha_{k,\min}\epsilon_{k,\min}}\Big)
$$
for some positive constants $C_1$, $C_2$, $C_3$ and $C_4$.
Therefore, we get $\lim_{k\rightarrow\infty}b_k/a_k=0$
from Assumption \ref{approx.step.tik}(ii) and (v). 
These conditions, 
Theorem \ref{rob2} 
and \eqref{eq14.tik} 
imply 
that $\lim_{k\rightarrow\infty}\Vert x^k-t^{k-1}\Vert=0$ almost surely. 
The result  follows from this fact and Lemma \ref{tykhonov.sequence}.
\end{proof}	

\subsection{Convergence rate analysis}\label{subsection:rate:analyis:tyk}
Next we give feasibility and solvability convergence rates. The feasibility rate will be given in terms of 
the metric $\dist(\cdot):=\dist(\cdot,X)^2$ evaluated at  
$$
\widetilde x^k:=\frac{\sum_{i=0}^k\mathsf{B}_{i}x^i}{\sum_{i=0}^k\mathsf{B}_{i}},
$$
i.e., the ergodic average of the iterates with weights $\mathsf{B}_k=\beta_{k,\min}(2-\beta_{k,\max})$. 
Assuming that $X$ is compact (but allowing the hard constraint $X_0$ to be unbounded), the solvability convergence rate 
will be given in terms of the dual gap function $\mathsf{G}$, defined in \eqref{def:gap:function}, evaluated at
$$
\widehat x^k:=\frac{\sum_{i=0}^k\alpha_{i,\max}\Pi(x^i)}{\sum_{i=0}^k\alpha_{i,\max}},
$$
which is the ergodic average of the feasible projections of the iterates with weights $\alpha_{k,\max}$. 
We shall use the notation $\overline x^k:=\Pi(x^k)$ for $k\in\mathbb{N}_0$. 

In the remainder of this subsection we recall definitions \eqref{def:sigma}, \eqref{def:B}, \eqref{def:Mt}-\eqref{convergence:tik:hag} 
and the ones given in Assumption \ref{approx.step.tik}. We first present the feasibility rate. In order to facilitate the presentation,
we define some constants. Given $\tau,\mu\in(0,1)$ and $R>0$ we set
\begin{eqnarray}
q_{k,\tau,\mu}(R):=1-2(1-\mu)\alpha_{k,\min}\epsilon_{k,\min}+
H_{k,\tau}(R^2+\epsilon_{k,\max}^2)\alpha_{k,\max}^2+2L\Delta_k,\label{def:qk:tilde:tau:mu}\\
f_{0,\tau,\mu}:=q_{0,\tau,\mu}(L)(1+\alpha_{0,\min}\epsilon_{0,\min}),\quad f_{k,\tau,\mu}:=
q_{k,\tau,\mu}(L)(1+\alpha_{k,\min}\epsilon_{k,\min})-1\mbox{ for }k\ge1,\label{def:fk:tilde}\\
\overline{H}_\tau:=\max_{k\in\mathbb{N}_0}H_{k,\tau},\quad I_t:=B_t+
\epsilon_{0,\max}M_t,\quad J_{t,\tau}:=\overline{H}_\tau\cdot\left(2B_t^2+\epsilon_{0,\max}^2M_t^2\right).\label{thm:feas:rate:tik:eq1}
\end{eqnarray}

\begin{theorem}[Feasibility rate]\label{thm:feas:rate:tik}
Suppose Assumptions \ref{mono.lips.unbiased.tik}-\ref{approx.step.tik} hold. Then given $\tau,\mu\in(0,1)$, for all $k\in\mathbb{N}_0$,
\begin{eqnarray*}
\esp[\dist(\widetilde x^k)^2]\sum_{i=0}^k\mathsf{B}_i&\le &
2G_\tau\sum_{i=0}^k f_{i,\tau,\mu}\esp[\Vert x^i-t^i\Vert^2]
+2G_\tau J_{t,\tau}\sum_{i=0}^k\alpha_{i,\max}^2+
\frac{G_\tau I_t^2}{\mu}\sum_{i=0}^k\frac{\Delta_{i}^2}{\alpha_{i,\min}\epsilon_{i,\min}}\nonumber\\
&+&4G_\tau^2 I_t^2\sum_{i=0}^k
\frac{\alpha_{i,\max}^2}{\mathsf{B}_i}+
2G_\tau\sum_{i=0}^k q_{i,\tau,\mu}(L)\left(\frac{M_t\Gamma_i}{\epsilon_{i,\min}}\right)^2\left(1+\frac{1}{\alpha_{i,\min}\epsilon_{i,\min}}\right).
\end{eqnarray*}
\end{theorem}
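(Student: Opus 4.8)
The plan is to re-run the long estimate in the proof of Theorem~\ref{convergence.tik}, but to \emph{retain} the feasibility term $\dist(x^k,X)^2$ that was there absorbed away. Recall that in that proof one reaches, for every $k\in\mathbb{N}_0$, the estimate obtained by combining \eqref{equation:recursion:tyk}, \eqref{equation:reg:mean}, \eqref{equation:def:tyk:seq} and \eqref{eq12.tik}; the \emph{only} place where $\dist(x^k,X)^2$ disappears is the Young-type bound \eqref{eee2}, where $2ab\le\lambda a^2+b^2/\lambda$ is applied with $\lambda:=A_{k,\tau}$, $a:=\dist(x^k,X)$, $b:=(B_t+\epsilon_{k,\max}M_t)\alpha_{k,\max}$. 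I would instead take $\lambda:=A_{k,\tau}/2$, which gives
$$
-A_{k,\tau}\dist(x^k,X)^2+2(B_t+\epsilon_{k,\max}M_t)\alpha_{k,\max}\dist(x^k,X)\le-\tfrac12 A_{k,\tau}\dist(x^k,X)^2+\frac{2(B_t+\epsilon_{k,\max}M_t)^2\alpha_{k,\max}^2}{A_{k,\tau}},
$$
i.e.\ \eqref{eee2} with a surviving term $-\tfrac12 A_{k,\tau}\dist(x^k,X)^2$ and the absorbed term doubled. Carrying this single change through \eqref{equation:recursion:tyk}--\eqref{eq13.tik} verbatim, using $A_{k,\tau}=\mathsf{B}_k/G_\tau$, and majorizing $(B_t+\epsilon_{k,\max}M_t)^2\le I_t^2$ and $H_{k,\tau}(2B_t^2+M_t^2\epsilon_{k,\max}^2)\le J_{t,\tau}$ (valid by Assumption~\ref{approx.step.tik}(i) and $\overline{H}_\tau=\max_k H_{k,\tau}$), one obtains for every $k\in\mathbb{N}_0$
$$
\esp\big[\Vert x^{k+1}-t^k\Vert^2\big|\alg_k\big]\le q_{k,\tau,\mu}(L)(1+\alpha_{k,\min}\epsilon_{k,\min})\,\Vert x^k-t^{k-1}\Vert^2-\frac{\mathsf{B}_k}{2G_\tau}\dist(x^k,X)^2+b_k',
$$
where $b_k'$ is the quantity $b_k$ of \eqref{bk} with its $\alpha_{k,\max}^2/\mathsf{B}_k$ contribution doubled (and with the constants $I_t,J_{t,\tau}$ in place of the $k$-dependent coefficients).

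Next I would take total expectations, isolate $\tfrac{\mathsf{B}_k}{2G_\tau}\esp[\dist(x^k,X)^2]$, and sum over $k=0,\dots,K$. The coefficient $q_{k,\tau,\mu}(L)(1+\alpha_{k,\min}\epsilon_{k,\min})$ equals $f_{0,\tau,\mu}$ at $k=0$ and $1+f_{k,\tau,\mu}$ for $k\ge1$ by \eqref{def:fk:tilde}; the ``$1+$'' part telescopes against the shifted sequence $\esp[\Vert x^{k+1}-t^k\Vert^2]$, leaving exactly $\sum_{k=0}^K f_{k,\tau,\mu}\esp[\Vert x^k-t^{k-1}\Vert^2]$ on the right, while the tail $\esp[\Vert x^{K+1}-t^K\Vert^2]\ge0$ is discarded. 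Multiplying by $2G_\tau$ and substituting the explicit form of $b_k'$ then yields
$$
\sum_{i=0}^K\mathsf{B}_i\esp[\dist(x^i,X)^2]\le 2G_\tau\sum_{i=0}^K f_{i,\tau,\mu}\esp[\Vert x^i-t^i\Vert^2]+2G_\tau J_{t,\tau}\sum_{i=0}^K\alpha_{i,\max}^2+\frac{G_\tau I_t^2}{\mu}\sum_{i=0}^K\frac{\Delta_i^2}{\alpha_{i,\min}\epsilon_{i,\min}}+4G_\tau^2 I_t^2\sum_{i=0}^K\frac{\alpha_{i,\max}^2}{\mathsf{B}_i}+2G_\tau\sum_{i=0}^K q_{i,\tau,\mu}(L)\Big(\frac{M_t\Gamma_i}{\epsilon_{i,\min}}\Big)^2\Big(1+\frac{1}{\alpha_{i,\min}\epsilon_{i,\min}}\Big),
$$
which is the asserted right-hand side (the index $t^i$ here recording $t^{i-1}$ of the telescoping, with the $\epsilon_{-1}$ convention of Assumption~\ref{approx.step.tik}).

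Finally, since each $X^j$ is convex the set $X$ is convex, so $x\mapsto\dist(x,X)^2$ is convex; by Jensen's inequality and the definition $\widetilde x^k=\big(\sum_{i=0}^k\mathsf{B}_i x^i\big)\big/\big(\sum_{i=0}^k\mathsf{B}_i\big)$ we get $\dist(\widetilde x^k,X)^2\le\big(\sum_{i=0}^k\mathsf{B}_i\dist(x^i,X)^2\big)\big/\big(\sum_{i=0}^k\mathsf{B}_i\big)$, and taking expectations and multiplying by $\sum_{i=0}^k\mathsf{B}_i$ converts the previous display into the claim. The main difficulty is entirely bookkeeping: one must re-traverse the estimate of Theorem~\ref{convergence.tik} while keeping the extra half-weight $-\tfrac12 A_{k,\tau}\dist(x^k,X)^2$, check that every term that is dropped or bounded is nonnegative and majorized by the stated constants, and perform the telescoping so that the $f_{k,\tau,\mu}$ (with its subtracted $1$) appear exactly; no new analytic ingredient beyond that proof is required.
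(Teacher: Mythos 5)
Your proposal is correct and follows essentially the same route as the paper: the paper's proof likewise replaces the Young-inequality step \eqref{eee2} by the half-weight version with $\lambda:=A_{k,\tau}/2$ (its relation \eqref{thm:feas:rate:tik:eq2}), carries the surviving $-\tfrac{\mathsf{B}_k}{2G_\tau}\dist(x^k,X)^2$ through the recursion of Theorem \ref{convergence.tik}, takes total expectation, sums, and finishes with Jensen's inequality applied to the convex map $y\mapsto\dist(y,X)^2$. Your explicit account of the telescoping via $f_{k,\tau,\mu}$ and of the $t^{i-1}$ versus $t^i$ indexing is a slightly more detailed rendering of steps the paper leaves implicit, but introduces nothing different in substance.
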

\begin{proof}
We recall relation \eqref{equation:recursion:tyk} 
in the proof of Theorem \ref{convergence.tik}. 
Instead of using \eqref{eee2}, we bound the left hand side of \eqref{eee2} by
\begin{equation}\label{thm:feas:rate:tik:eq2}
-A_{k,\tau}\dist(x^k)^2+2I_t\alpha_{k,\max}\dist(x^k)\le
-\frac{A_{k,\tau}}{2}\dist(x^k)^2+\frac{2I_t^2\alpha_{k,\max}^2}{A_{k,\tau}}=
-\frac{\mathsf{B}_k}{2G_\tau}\dist(x^k)^2+\frac{2 G_\tau I_t^2\alpha_{k,\max}^2}{\mathsf{B}_k},
\end{equation}
using the facts that $2ab\le \lambda a^2+\lambda^{-1}b^2$ with $\lambda=A_{k,\tau}/2$, $a=\dist(x^k)$ and $b=I_t\alpha_{k,\max}$.

We combine \eqref{equation:recursion:tyk}, \eqref{equation:reg:mean}, \eqref{equation:def:tyk:seq} 
and \eqref{eq12.tik} with \eqref{thm:feas:rate:tik:eq2}, take total expectation and sum from $0$ to $k$ in order to get 
\begin{eqnarray}
\sum_{i=0}^k\mathsf{B}_i\esp[\dist(x^i)^2]&\le &2G_\tau\sum_{i=0}^k f_{i,\tau,\mu}\esp[\Vert x^i-t^i\Vert^2]+
2G_\tau J_{t,\tau}\sum_{i=0}^k\alpha_{i,\max}^2+
\frac{G_\tau I_t^2}{\mu}\sum_{i=0}^k\frac{\Delta_{i}^2}{\alpha_{i,\min}\epsilon_{i,\min}}\nonumber\\
&+&4G_\tau^2 I_t^2\sum_{i=0}^k\frac{\alpha_{i,\max}^2}{\mathsf{B}_i}+
2G_\tau\sum_{i=0}^k q_{i,\tau,\mu}(L)\left(\frac{M_t\Gamma_i}{\epsilon_{i,\min}}\right)^2\left(1+\frac{1}{\alpha_{i,\min}\epsilon_{i,\min}}\right).
\label{thm:feas:rate:tik:eq3}
\end{eqnarray}
In view of the convexity of $y\mapsto\dist(y)^2$ and the linearity of the expectation operator, we have
\begin{equation}\label{thm:feas:rate:tik:eq4}
\esp[\dist(\widetilde{x}^k)^2]\le\frac{\sum_{i=0}^k\mathsf{B}_i\esp[\dist(x^i)^2]}{\sum_{i=0}^k\mathsf{B}_i}.
\end{equation}
Relations \eqref{thm:feas:rate:tik:eq3}-\eqref{thm:feas:rate:tik:eq4} prove the required claim.
\end{proof}

Next we present the solvability rate assuming that $X$ is compact. We will need the following definitions: for $\tau,\mu\in(0,1)$ and $R>0$,
\begin{equation}
B_X:=\sup_{x\in X}B(x),\quad M_X:=\sup_{x\in X}\Vert x\Vert,\quad I_X:=B_X+\epsilon_{0,\max}M_X,
\end{equation}
\begin{equation}
J_{X,\tau}(x):=\overline{H}_\tau\left[2L^2\diam(X)^2+2B(x)^2+\epsilon_{0,\max}^2M_X^2\right],\quad K_X(x):=6L^2\diam(X)^2+3\sigma(x)^2,
\end{equation}
\begin{equation}\label{thm:gap:rate:tyk:eq1}
h_{0,\tau,\mu}(R):=q_{0,\tau,\mu}(R),\quad h_{k,\tau,\mu}(R):=q_{k,\tau,\mu}(R)-1,\mbox{ for }k\ge1.
\end{equation}

We start with an intermediate lemma.
\begin{lemma}[Feasibility error control]\label{lemma:feas:error}
For any $I>0$ and $k\in\mathbb{N}_0$,
$$
\sum_{i=0}^k\esp\left\{2I\alpha_{i,\max}\dist(x^i)-
\frac{C\mathsf{B}_i}{G_\tau}\sum_{j=1}^m\left[g^+_{\omega_{i,j}}(j|x^i_j)\right]^2\right\}\le 
G_\tau I^2\sum_{i=0}^k\frac{\alpha_{i,\max}^2}{\mathsf{B}_i}.
$$
\end{lemma}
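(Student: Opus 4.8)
The plan is to reduce the claim to a one-step estimate obtained by conditioning on $\alg_i$, and then to sum over $i$ using the tower property. First I would fix $i\in\mathbb{N}_0$ and take the conditional expectation $\esp[\cdot\,|\alg_i]$ of the $i$-th summand. Since $x^i\in\alg_i$ and the quantities $\alpha_{i,\max}$, $\mathsf{B}_i$, $G_\tau$, $C$, $I$ are deterministic, the only term not $\alg_i$-measurable is $\sum_{j=1}^m\big[g^+_{\omega_{i,j}}(j|x^i_j)\big]^2$, and I would lower bound its conditional expectation exactly as in relation \eqref{equation:reg:mean} of the proof of Theorem \ref{convergence.tik}: by Assumption \ref{assump:constraint:reg:cart} and the definition $C=\max_{j\in[m]}c^j$,
$$
\esp\left[\sum_{j=1}^m\left(g^+_{\omega_{i,j}}(j|x^i_j)\right)^2\Big|\alg_i\right]\ge\frac{1}{C}\,\dist(x^i)^2 .
$$
Consequently,
$$
\esp\left\{2I\alpha_{i,\max}\dist(x^i)-\frac{C\mathsf{B}_i}{G_\tau}\sum_{j=1}^m\left[g^+_{\omega_{i,j}}(j|x^i_j)\right]^2\,\Big|\,\alg_i\right\}\le 2I\alpha_{i,\max}\dist(x^i)-\frac{\mathsf{B}_i}{G_\tau}\dist(x^i)^2 .
$$

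Next I would apply the elementary inequality $2ab\le\lambda a^2+\lambda^{-1}b^2$ with $a:=\dist(x^i)$, $b:=I\alpha_{i,\max}$ and $\lambda:=\mathsf{B}_i/G_\tau$ (admissible since $\mathsf{B}_i=\beta_{i,\min}(2-\beta_{i,\max})>0$ by Assumption \ref{approx.step.tik}, and recalling $A_{k,\tau}=\mathsf{B}_k/G_\tau$ from \eqref{convergence:tik:hag}), obtaining
$$
2I\alpha_{i,\max}\dist(x^i)\le\frac{\mathsf{B}_i}{G_\tau}\dist(x^i)^2+\frac{G_\tau I^2\alpha_{i,\max}^2}{\mathsf{B}_i}.
$$
Plugging this into the previous display cancels the $\frac{\mathsf{B}_i}{G_\tau}\dist(x^i)^2$ terms and leaves
$$
\esp\left\{2I\alpha_{i,\max}\dist(x^i)-\frac{C\mathsf{B}_i}{G_\tau}\sum_{j=1}^m\left[g^+_{\omega_{i,j}}(j|x^i_j)\right]^2\,\Big|\,\alg_i\right\}\le\frac{G_\tau I^2\alpha_{i,\max}^2}{\mathsf{B}_i}.
$$

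Finally I would take total expectation, using $\esp[\esp[\,\cdot\,|\alg_i]]=\esp[\,\cdot\,]$, and sum the resulting bound over $i=0,\dots,k$, which gives precisely
$$
\sum_{i=0}^k\esp\left\{2I\alpha_{i,\max}\dist(x^i)-\frac{C\mathsf{B}_i}{G_\tau}\sum_{j=1}^m\left[g^+_{\omega_{i,j}}(j|x^i_j)\right]^2\right\}\le G_\tau I^2\sum_{i=0}^k\frac{\alpha_{i,\max}^2}{\mathsf{B}_i}.
$$
There is no genuine analytic obstacle here; the only point demanding care is the measurability bookkeeping, namely that $x^i_j\in\alg_i$ while $\omega_{i,j}\notin\alg_i$, so that Assumption \ref{assump:constraint:reg:cart} is exactly the right instrument to control the conditional mean of the constraint–violation term, and that the Young parameter $\lambda=\mathsf{B}_i/G_\tau$ is strictly positive so the cancellation is legitimate.
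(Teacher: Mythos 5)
Your proof is correct and follows essentially the same route as the paper's: the identical one-step estimate combining Assumption \ref{assump:constraint:reg:cart} (via \eqref{equation:reg:mean}) with Young's inequality $2ab\le\lambda a^2+\lambda^{-1}b^2$ for $\lambda=\mathsf{B}_i/G_\tau$. The only (immaterial) difference is that you take the total expectation of each summand directly by the tower property and then sum, whereas the paper runs a recursive conditional-expectation cascade on the partial sums $Q_\ell$; your bookkeeping is, if anything, slightly cleaner.
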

\begin{proof}
For $0\le \ell\le k$, define
$$
Q_\ell:=\sum_{i=0}^\ell\left\{2I\alpha_{i,\max}\dist(x^i)-
\frac{C\mathsf{B}_i}{G_\tau}\sum_{j=1}^m\left[g^+_{\omega_{i,j}}(j|x^i_j)\right]^2\right\}.
$$
We have
\begin{eqnarray}
\esp\left\{Q_k\Big|\alg_k\right\}&=&Q_{k-1}+
2I\alpha_{k,\max}\dist(x^k)-
\frac{C\mathsf{B}_k}{G_\tau}\esp\left\{\sum_{j=1}^m\left[g^+_{\omega_{k,j}}(j|x^k_j)\right]^2\Bigg|\alg_k\right\}\nonumber\\
&\le &Q_{k-1}+2I\alpha_{k,\max}\dist(x^k)-\frac{\mathsf{B}_k}{G_\tau}\dist(x^k)^2\nonumber\\
&\le &Q_{k-1}+\frac{G_\tau I^2\alpha_{k,\max}^2}{\mathsf{B}_k},\label{lemma:feas:error:eq1}
\end{eqnarray}
using the fact that $Q_{k-1},x^k\in\alg_k$ 
in the equality,  
\eqref{equation:reg:mean} in the first inequality and 
the fact that $2ab\le\lambda a^2+\lambda^{-1} b^2$ with $a:=I\alpha_{k,\max}$, $b:=\dist(x^k)$ and $\lambda:=G_\tau/\mathsf{B}_k$
in the second inequality. 
We then take $\esp[\cdot|\alg_{k-1}]$ in \eqref{lemma:feas:error:eq1} and use the fact that 
$\esp[\esp[\cdot|\alg_k]|\alg_{k-1}]=\esp[\cdot|\alg_{k-1}]$ in order to obtain
\begin{eqnarray}
\esp\left\{Q_k\Big|\alg_{k-1}\right\}&\le &\esp\left\{Q_{k-1}\Big|\alg_{k-1}\right\}+
\frac{G_\tau I^2\alpha_{k,\max}^2}{\mathsf{B}_k}.\label{lemma:feas:error:eq2}
\end{eqnarray}
Proceeding by induction as in \eqref{lemma:feas:error:eq1}-\eqref{lemma:feas:error:eq2}, we get 
\begin{eqnarray}
\esp\left\{Q_k\Big|\alg_0\right\}&\le &\esp\left\{Q_0\Big|\alg_0\right\}+
\sum_{i=1}^k\frac{G_\tau I^2\alpha_{i,\max}^2}{\mathsf{B}_i}\\
&\le &\frac{G_\tau I^2\alpha_{0,\max}^2}{\mathsf{B}_0}+
\sum_{i=1}^k\frac{G_\tau I^2\alpha_{i,\max}^2}{\mathsf{B}_i}=\sum_{i=0}^k\frac{G_\tau I^2\alpha_{i,\max}^2}{\mathsf{B}_i}.
\label{mumu}
\end{eqnarray}
Taking total expectation in \eqref{mumu} and using the fact that $\esp[\esp[\cdot|\alg_0]]=\esp[\cdot]$, we prove the claim.
\end{proof}

\begin{theorem}[Solvability rate]\label{thm:gap:rate:tyk}
Suppose that Assumptions \ref{mono.lips.unbiased.tik}-\ref{approx.step.tik} hold. Then, given $\tau,\mu\in(0,1)$, for all $k\in\mathbb{N}_0$,
\begin{eqnarray}
2\esp[\mathsf{G}(\widehat{x}^k)]\sum_{i=0}^k\alpha_{i,\max}&\le&\diam(X)^2+
2\sum_{i=0}^k h_{i,\tau,\mu}(L)\left\{\esp\left[\dist(x^i)^2\right]+\diam(X)^2\right\}\nonumber\\
&&+\left[J_{X,\tau}(\overline x^0)+K_X(\overline x^0)\right]\sum_{i=0}^k\alpha_{i,\max}^2
+G_\tau [I_X+2L\diam(X)]^2\sum_{i=0}^k\frac{\alpha_{i,\max}^2}{\mathsf{B}_i}\nonumber\\
&&+\frac{I_X^2}{2\mu}\sum_{i=0}^k\frac{\Delta_{i}^2}{\alpha_{i,\min}\epsilon_{i,\min}}+
2\diam(X)M_X\sum_{i=0}^k\alpha_{i,\max}\epsilon_{i,\max}.\label{thm:gap:rate:tyk:statement}
\end{eqnarray}
\end{theorem}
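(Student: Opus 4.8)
The plan is to re-run the one-step estimate \eqref{eq10.tik} that underlies the proof of Theorem \ref{convergence.tik}, but now with the free feasible point $x$ taken to be an \emph{arbitrary} $y\in X$ instead of the Tykhonov iterate $t^k$, and to read off from it a bound on $\langle T(y),\Pi(x^k)-y\rangle$. The term $\langle D(\alpha_k)(T+D(\epsilon_k))(y),y-\Pi(x^k)\rangle$ appearing in \eqref{eq8.tik} is precisely the quantity that, after transposition, produces $2\alpha_{k,\max}\langle(T+D(\epsilon_k))(y),\Pi(x^k)-y\rangle$; peeling off the regularization part $\langle D(\epsilon_k)y,\Pi(x^k)-y\rangle$ and bounding it by $\epsilon_{k,\max}M_X\diam(X)$ (legitimate since $\Pi(x^k),y\in X$) isolates $\langle T(y),\Pi(x^k)-y\rangle$ and accounts for the $2\diam(X)M_X\sum_i\alpha_{i,\max}\epsilon_{i,\max}$ term. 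Since $\widehat x^k$ is the convex combination of the $\Pi(x^i)\in X$ with weights $\alpha_{i,\max}/\sum_j\alpha_{j,\max}$, one has the exact identity $\mathsf{G}(\widehat x^k)\sum_i\alpha_{i,\max}=\sup_{y\in X}\sum_{i=0}^k\alpha_{i,\max}\langle T(y),\Pi(x^i)-y\rangle$, so it suffices to bound the right-hand side and no Jensen step is needed for the gap.

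For a \emph{fixed} $y\in X$ the argument is then to sum the transposed one-step inequality over $i=0,\dots,k$ and telescope. Writing $q_{i,\tau,\mu}(L(v^i))=1+(q_{i,\tau,\mu}(L(v^i))-1)$, the leading block telescopes to $\Vert x^0-y\Vert^2-\Vert x^{k+1}-y\Vert^2\le\Vert x^0-y\Vert^2$, while the remainder $\sum_i(q_{i,\tau,\mu}(L(v^i))-1)\Vert x^i-y\Vert^2$, bounded by way of $\Vert x^i-y\Vert^2\le 2\dist(x^i)^2+2\diam(X)^2$ and $\esp[L(v^i)^2]=L^2$, produces the $\sum_ih_{i,\tau,\mu}(L)\{\esp[\dist(x^i)^2]+\diam(X)^2\}$ contribution. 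The remaining $y$-dependent error terms of \eqref{eq10.tik}, namely $H_{k,\tau}(\Vert F(v^k,y)\Vert^2+\Vert y\Vert^2\epsilon_{k,\max}^2)\alpha_{k,\max}^2$ and $(B(y)+\epsilon_{k,\max}\Vert y\Vert)^2\Delta_k^2/(2\mu\alpha_{k,\min}\epsilon_{k,\min})$, are made uniform over $X$ using $\Vert y\Vert\le M_X$, Lemma \ref{l1} ($B(y)^2\le 2L^2\diam(X)^2+2B(\overline x^0)^2$), and the triangle-inequality/Assumption \ref{lipschitz}(i) variance estimate $\sigma(y)^2\le 3\sigma(\overline x^0)^2+6L^2\diam(X)^2$; together with a split of $F(v^k,\cdot)$ around $\overline x^0$, these are what yield the constants $J_{X,\tau}(\overline x^0)$ and $K_X(\overline x^0)$. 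The subgradient/feasibility group $2\alpha_{k,\max}(B(y)+\epsilon_{k,\max}\Vert y\Vert)\dist(x^k)-CA_{k,\tau}\sum_j(g^+_{\omega_{k,j}}(j|x^k_j))^2$ is disposed of exactly by Lemma \ref{lemma:feas:error} with $I:=I_X+2L\diam(X)$, giving the $G_\tau[I_X+2L\diam(X)]^2\sum_i\alpha_{i,\max}^2/\mathsf{B}_i$ term, and relations \eqref{equation:reg:mean}, \eqref{equation:def:tyk:seq} and \eqref{eq12.tik} are reused as in Theorem \ref{convergence.tik}, with $A_{k,\tau}=\mathsf{B}_k/G_\tau$ converting the feasibility penalty into the $1/\mathsf{B}_k$ weights.

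The step I expect to be the genuine obstacle is the bookkeeping that lets the supremum over $y\in X$ commute with the total expectation while keeping the zero-mean term $2\langle y-x^k,D(\alpha_k)\varsigma^k\rangle$ under control, since $\esp$ cannot be pushed inside $\sup_{y\in X}$. The remedy is to produce a \emph{pathwise} inequality valid simultaneously for all $y\in X$ in which every $y$-dependent quantity has already been replaced by its supremum over the compact set $X$ (through $M_X$, $B_X$, $\sigma(\overline x^0)$ and $\dist(\Pi(x^i),\cdot)\le\diam(X)$); then, invoking compactness of $X$ and continuity of $T$, one applies this inequality on each sample path to the maximizing $y$ of $\langle T(\cdot),\widehat x^k-\cdot\rangle$. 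The delicate point is that the cross term $\sum_i2\langle y-x^i,D(\alpha_i)\varsigma^i\rangle$ must be absorbed \emph{before} the supremum — by coupling it through Young's inequality with the strictly negative quadratic terms already present in the recursion ($-\Vert x^{i+1}-y\Vert^2$ together with the negative part of the $(q_i-1)$-term), or through a conditional-expectation device analogous to the one in Lemma \ref{lemma:feas:error} — so that it reappears only via $\esp[\dist(x^i)^2]$ and an $O(\alpha_{i,\max}^2)$ variance contribution rather than through a term of order $\sqrt{\sum_i\alpha_{i,\max}^2}$; identifying the correct pairing is the crux. Once a bound of the announced shape is in hand pathwise, taking total expectation, using $\esp[L(v^i)^2]=L^2$ and $\esp[\varsigma^i\mid\alg_i]=0$, and invoking the a.s.\ convergence $\dist(x^k,X^*)\to0$ of Theorem \ref{convergence.tik} to control the $\esp[\dist(x^i)^2]$ terms, closes the argument.
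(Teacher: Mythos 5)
Your overall architecture matches the paper's: you reuse \eqref{eq10.tik} with an arbitrary feasible point, peel off the regularization via $\epsilon_{k,\max}M_X\diam(X)$, sum and telescope the leading block into the $h_{i,\tau,\mu}(L)$ terms, bound the remaining $y$-dependent errors uniformly over the compact set $X$, and invoke Lemma \ref{lemma:feas:error} with $I:=I_X+2L\diam(X)$ — all of which is exactly what the paper does (and your remark that no Jensen step is needed is harmless, since $Q(x,\cdot)$ is affine). But you have correctly located the crux — controlling $\sup_{y\in X}\sum_{i=0}^k 2\langle y-x^i,D(\alpha_i)\varsigma^i\rangle$ — and then left it unresolved: you offer two candidate remedies and neither works. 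Coupling the cross term to the negative quadratic $-2(1-\mu)\alpha_{i,\min}\epsilon_{i,\min}\Vert x^i-y\Vert^2$ via Young's inequality forces a variance contribution of order $\sum_i\alpha_{i,\max}^2\Vert\varsigma^i\Vert^2/(\alpha_{i,\min}\epsilon_{i,\min})$; with the stepsizes of Corollary \ref{cor:step:reg} this behaves like $\sum_{i\le k} i^{-2\delta}\sim k^{1-2\delta}$, which after division by $\sum_i\alpha_{i,\max}\sim k^{1/2-\delta}$ diverges, and it would in any case not reproduce the stated bound \eqref{thm:gap:rate:tyk:statement}, which keeps the $h_{i,\tau,\mu}(L)$ coefficients intact. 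A conditional-expectation device in the style of Lemma \ref{lemma:feas:error} also fails, because that lemma's quantity $Q_\ell$ contains no free variable: once you take $\sup_{y\in X}$, the increment $\langle y-x^i,D(\alpha_i)\varsigma^i\rangle$ is no longer conditionally centered.

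The paper's actual device is an auxiliary (``ghost'') sequence: set $\overline x^k:=\Pi(x^k)$, $\overline\varsigma^k:=F(v^k,\overline x^k)-T(\overline x^k)$, $\Delta\varsigma^k:=\varsigma^k-\overline\varsigma^k$, and $u^{k+1}:=\Pi\left[u^k+D(\alpha_k)\overline\varsigma^k\right]$, and decompose $\langle y-x^i,D(\alpha_i)\varsigma^i\rangle=\langle y-u^i,D(\alpha_i)\overline\varsigma^i\rangle+\langle y-u^i,D(\alpha_i)\Delta\varsigma^i\rangle+\langle u^i-x^i,D(\alpha_i)\varsigma^i\rangle$. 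The first piece telescopes via Lemma \ref{proj}(iii) to $\diam(X)^2+\sum_i\alpha_{i,\max}^2\Vert\overline\varsigma^i\Vert^2$ \emph{uniformly in} $y$ — this is where $K_X(\overline x^0)$ actually enters, as the variance of the noise at the projected iterates, not as a uniform bound on $\sigma(y)^2$ over $X$. The second piece is bounded by $2\diam(X)(L(v^i)+L)\alpha_{i,\max}\dist(x^i)$ and is absorbed into Lemma \ref{lemma:feas:error}, which is precisely why $I=I_X+2L\diam(X)$ rather than $I_X$. The third piece is $y$-independent with zero mean, so the supremum over $y$ commutes harmlessly with the expectation. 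Without this decomposition, or an equivalent one, your argument does not close.
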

\begin{proof}
We recall relation \eqref{eq10.tik} 
in the proof of Theorem \ref{convergence.tik}, 
where $\varsigma^k$ is defined in \eqref{def:varsigma}. Regarding the second line of \eqref{eq10.tik}, we have for any $x\in X$,
\begin{equation}\label{thm:gap:rate:tyk:eq2}
2\alpha_{k,\max}\langle (T+D(\epsilon_k))(x),x-\Pi(x^k)\rangle\le2\alpha_{k,\max}\langle T(x),x-\Pi(x^k)\rangle+
2\alpha_{k,\max}\epsilon_{k,\max} M_X\diam(X),
\end{equation}
using Cauchy-Schwartz inequality and the definitions of $M_X$ and $\diam(X)$.

We set $Q(x,y):=\langle T(x),y-x\rangle$ so that $\mathsf{G}(y):=\sup_{x\in X}Q(x,y)$ as in \eqref{def:gap:function}. 
Using \eqref{thm:gap:rate:tyk:eq2} in \eqref{eq10.tik} and then summing from $0$ to $k$, we get for all $x\in X$,
\begin{eqnarray}
2\sum_{i=0}^k\alpha_{i,\max}Q(x,\Pi(x^i))&\le 
&\sum_{i=0}^k h_{i,\tau,\mu}(L(v^i))\Vert x^i-x\Vert^2+
\overline{H}_\tau\sum_{i=0}^k\left[\Vert F(v^i,x)\Vert^2+
\Vert x\Vert^2\epsilon_{i,\max}^2\right]\alpha_{i,\max}^2\nonumber\\
&&+\frac{I_X^2}{2\mu}\sum_{i=0}^k
\frac{\Delta_{i}^2}{\alpha_{i,\min}\epsilon_{i,\min}}+2\diam(X)M_X\sum_{i=0}^k\alpha_{i,\max}\epsilon_{i,\max}\nonumber\\
&&+\sum_{i=0}^k\left\{2I_X\alpha_{i,\max}\dist(x^i)-
\frac{C\mathsf{B}_i}{G_\tau}\sum_{j=1}^m\left[g^+_{\omega_{i,j}}(j|x^i_j)\right]^2\right\}\nonumber\\
&&+2\sum_{i=0}^k\langle x-x^i,D(\alpha_i)\varsigma^i\rangle, \label{thm:gap:rate:tyk:eq3}
\end{eqnarray}
where the last line of \eqref{eq10.tik} has been bounded using the definition of $I_X$.

The total expectation of the term in the first line of \eqref{thm:gap:rate:tyk:eq3} is bounded above by
\begin{eqnarray}
&&2\sum_{i=0}^k\esp\left\{h_{i,\tau,\mu}(L(v^i))\left[\dist(x^i)^2+\diam(X)^2\right]\right\}+
\overline{H}_\tau\sum_{i=0}^k\left[2L^2\diam(X)^2+2B(\overline x^0)^2+
M_X^2\epsilon_{i,\max}^2\right]\alpha_{i,\max}^2,\nonumber\\
&\le &\sum_{i=0}^k\esp\left\{\esp\left[h_{i,\tau,\mu}(L(v^i))\big|\alg_i\right]\cdot\left[\dist(x^i)^2+\diam(X)^2\right]\right\}+J_{X,\tau}(\overline x_0)\sum_{i=0}^k\alpha_{i,\max}^2\nonumber\\
&=&\sum_{i=0}^k h_{i,\tau,\mu}(L)\esp\left[\dist(x^i)^2+\diam(X)^2\right]+J_{X,\tau}(\overline x_0)\sum_{i=0}^k\alpha_{i,\max}^2,
\label{thm:gap:rate:tyk:eq4}
\end{eqnarray}
where in first line we used Lemma \ref{l1}, $\Vert x^i-x\Vert^2\le2\dist(x^i)^2+2\diam(X)^2$, 
$\Vert x-\overline x^i\Vert\le\diam(X)$ and $\Vert x\Vert\le M_X$ for all $x\in X$ and $0\le i\le k$, in second line we used the property $\esp\{\esp\{\cdot|\alg_i\}\}=\esp\{\cdot\}$ and $x^i\in\alg_i$ and in third line we used $\esp\left[h_{i,\tau,\mu}(L(v^i))\big|\alg_i\right]=\esp\left[h_{i,\tau,\mu}(L(v^i))\right]=h_{i,\tau,\mu}(L)$ (using Assumption \ref{unbiased}).

We will now bound the last term in the right hand side of \eqref{thm:gap:rate:tyk:eq3}. We define
$$
\overline{\varsigma}^k:=F(v^k,\overline x^k)-T(\overline x^k),\quad \Delta\varsigma^k:=\varsigma^k-\overline \varsigma^k.
$$
We define $\{u^k\}$ recursively as follows. Take any $u^0\in X$ and set, for $k\in\mathbb{N}_0$,
$$
u^{k+1}:=\Pi\left[u^k+D(\alpha_k)\overline\varsigma^k\right].
$$
Note that $u^k\in\alg_k$. We write, for all $k\in\mathbb{N}_0$,
\begin{eqnarray}
\Delta M_{k}&:=&\langle u^k-x^k,D(\alpha_k)\varsigma^k\rangle,\nonumber\\
2\sum_{i=0}^k\langle x-x^i,D(\alpha_i)\varsigma^i\rangle &=& 
2\sum_{i=0}^k\langle x-u^i,D(\alpha_i)\overline\varsigma^i\rangle+
2\sum_{i=0}^k\langle x-u^i,D(\alpha_i)\Delta\varsigma^i\rangle+2\sum_{i=0}^k\Delta M_{i}.\label{thm:gap:rate:tyk:eq5}
\end{eqnarray}
Note that for all $k\in\mathbb{N}_0$, 
\begin{equation}\label{thm:gap:rate:tyk:zero:mean:M}
\esp\left[\Delta M_{k}\right]=0,
\end{equation}
which follows from $u^k,x^k\in\alg_k$ and $\esp[\varsigma^k]=0$ (Assumption \ref{unbiased}).

Concerning the first term in the right hand side of \eqref{thm:gap:rate:tyk:eq5}, we have
\begin{eqnarray}
2\langle x-u^i,D(\alpha_i)\overline\varsigma^i\rangle &=& 2\langle x-u^{i+1},D(\alpha_i)\overline\varsigma^i\rangle+
2\langle u^{i+1}-u^i,D(\alpha_i)\overline\varsigma^i\rangle\nonumber\\
&\le &\Vert u^i-x\Vert^2-\Vert u^{i+1}-x\Vert^2-\Vert u^{i+1}-u^i\Vert^2\nonumber\\
&+&\Vert u^{i+1}-u^i\Vert^2+\Vert D(\alpha_i)\overline\varsigma^i\Vert^2\nonumber\\
&\le &\Vert u^i-x\Vert^2-\Vert u^{i+1}-x\Vert^2+\alpha_{i,\max}^2\Vert\overline\varsigma^i\Vert^2,\label{thm:gap:rate:tyk:eq6}
\end{eqnarray}
using Lemma \ref{proj}(iii) with the definition of $u^{i+1}$ and $2ab\le a^2+b^2$ with $a:=\Vert u^{i+1}-u^i\Vert$ 
and $b:=D(\alpha_i)\overline\varsigma^i$ 
in the first inequality. 
Summing \eqref{thm:gap:rate:tyk:eq6} from $0$ to $k$ and then taking total expectation in \eqref{thm:gap:rate:tyk:eq5} we get 
\begin{eqnarray}
\esp\left[2\sum_{i=0}^k\langle x-x^i,D(\alpha_i)\varsigma^i\rangle\right] 
&\le &\diam(X)^2+\sum_{i=0}^k\alpha_{i,\max}^2\esp\left[\Vert\overline\varsigma^i\Vert^2\right]+
2\sum_{i=0}^k\esp\left[\langle x-u^i,D(\alpha_i)\Delta\varsigma^i\rangle\right],
\label{thm:gap:rate:tyk:eq7}
\end{eqnarray}
using the fact that $\Vert u^0-x\Vert\le\diam(X)$ and \eqref{thm:gap:rate:tyk:zero:mean:M}.
Regarding the second term in the right hand side of   
\eqref{thm:gap:rate:tyk:eq7},
we have
\begin{eqnarray}
\esp\left\{\Vert\overline\varsigma^i\Vert^2\right\}=\esp\left\{\esp\{\Vert\overline\varsigma^i\Vert^2|\alg_i\}\right\}
&\le &\esp\left\{\esp\left\{3[L(v^i)^2+L^2]\Vert\overline x^i-\overline x^0\Vert^2+3\sigma(\overline x^0)^2|\alg_i\right\}\right\},\nonumber\\
&\le &6L^2\diam(X)^2+3\sigma(\overline x^0)^2,
\end{eqnarray}
using the Lipschitz continuity of $F(v^k,\cdot)$ and $T$,
$\overline\varsigma^i=F(v^i,\overline x^i)-F(v^i,\overline x^0)+T(\overline x^0)-T(\overline x^i)+F(v^i,\overline x^0)-T(\overline x^0)$,  
$(a+b+c)^2\le3a^2+3b^2+3c^2$ and $\overline x^i\in\alg_i$ 
in the first inequality 
and that $\esp[L(v^i)^2|\alg_i]=L^2$ and $\Vert\overline x^i-\overline x^0\Vert\le\diam(X)$
in the second inequality.  
The third term in the right hand side of \eqref{thm:gap:rate:tyk:eq7} is equal to	
\begin{eqnarray}
2\sum_{i=0}^k\esp\left\{\esp\left\{\langle x-u^i,D(\alpha_i)\Delta\varsigma^i\rangle|\alg_i\right\}\right\}&\le &
2\diam(X)\sum_{i=0}^k\alpha_{i,\max}\esp\left\{\esp\left\{\Vert\Delta\varsigma^i\Vert|\alg_i\right\}\right\}\nonumber\\
&\le &2\diam(X)\sum_{i=0}^k\alpha_{i,\max}\esp\left\{\esp\left\{[L(v^i)+L]\Vert x^i-\overline x^i\Vert|\alg_i\right\}\right\}\nonumber\\
&\le &4\diam(X)L\sum_{i=0}^k\alpha_{i,\max}\esp\left\{\dist(x^i)\right\},\label{thm:gap:rate:tyk:eq7:2}
\end{eqnarray}
using Cauchy-Schwartz inequality and the fact that $\Vert x-u^i\Vert\le\diam(X)$, 
in the first inequality, 
the Lipschitz continuity of $F(v^k,\cdot)$ 
and $T$ 
in the second inequality,
and 
that $\esp[L(v^i)|\alg_i]\le L$ and $x^i\in\alg_i$
in the third inequality. 

From the convexity of $y\mapsto Q(x,y)$, we  get 
\begin{equation}\label{thm:gap:rate:tyk:eq8}
\esp\left[\mathsf{G}(\widehat x^k)\right]=
\esp\left[\sup_{x\in X}Q(x,\widehat x^k)\right]\le
\esp\left[\sup_{x\in X}\frac{\sum_{i=0}^k\alpha_{i,\max}Q(x,\Pi(x^i))}{\sum_{i=0}^k\alpha_{i,\max}}\right].
\end{equation}

We are now ready to prove the claim. We take total expectation in \eqref{thm:gap:rate:tyk:eq3} 
and combine it with \eqref{thm:gap:rate:tyk:eq4} and \eqref{thm:gap:rate:tyk:eq7}-\eqref{thm:gap:rate:tyk:eq8}. 
In order to complete the proof, we use the obtained relation, combine the expectation of the fifth term 
$$
\sum_{i=0}^k\left\{2I_X\alpha_{i,\max}\dist(x^i)-
\frac{C\mathsf{B}_i}{G_\tau}\sum_{j=1}^m\left[g^+_{\omega_{i,j}}(j|x^i_j)\right]^2\right\}
$$
in the right hand side of \eqref{thm:gap:rate:tyk:eq3} with \eqref{thm:gap:rate:tyk:eq7:2} and 
use Lemma \ref{lemma:feas:error} with $I:=I_X+2L\diam(X)$ in order to obtain the final bound
$$
\sum_{i=0}^k\esp\left\{2[I_X+2L\diam(X)]\alpha_{i,\max}\dist(x^i)-\frac{C\mathsf{B}_i}{G_\tau}
\sum_{j=1}^m\left[g^+_{\omega_{i,j}}(j|x^i_j)\right]^2\right\}\le G_\tau [I_X+2L\diam(X)]^2\sum_{i=0}^k\frac{\alpha_{i,\max}^2}{\mathsf{B}_i}.
$$
\end{proof}

\begin{corollary}[Solvability and feasibility rates: asynchronous parameters]
Suppose\newline
that Assumptions \ref{mono.lips.unbiased.tik}-\ref{approx.step.tik} hold. 
Take stepsizes and regularization parameters as specified in Corollary \ref{cor:step:reg}. 
Then Theorem \ref{convergence.tik} and the following feasibility rate hold:
$$
\esp\left[\dist(\widetilde x^k,X)^2\right]\lesssim\frac{1}{k}.
$$
If additionally $X$ is compact, the following solvability rate holds: for any $\delta\in(0,\frac{1}{2})$,
$$
\esp\left[\mathsf{G}(\widehat x^k)\right]\lesssim\frac{k^\delta\ln k}{\sqrt{k}}.
$$
\end{corollary}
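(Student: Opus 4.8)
The plan is to substitute the explicit stepsizes and regularization parameters of Corollary \ref{cor:step:reg} into the feasibility bound of Theorem \ref{thm:feas:rate:tik} and the solvability bound of Theorem \ref{thm:gap:rate:tyk}, and to estimate the order of magnitude of each summand. First I would note that, by Corollary \ref{cor:step:reg}, those choices satisfy Assumption \ref{approx.step.tik}, so that Theorem \ref{convergence.tik}, Theorem \ref{thm:feas:rate:tik} and Theorem \ref{thm:gap:rate:tyk} all apply; fix $\tau,\mu\in(0,1)$. Setting $a:=\tfrac{1}{2}+\delta$ and $b:=\tfrac{1}{2}-\delta$ (so $a+b=1$ and $2a>1$), the elementary estimates I need are: $\mathsf{B}_k=\beta_{k,\min}(2-\beta_{k,\max})$ is a fixed positive constant since $\beta_{k,j}\equiv\beta_j$, whence $\sum_{i=0}^k\mathsf{B}_i\sim k$; $\alpha_{i,\max}\sim i^{-a}$ and $\epsilon_{i,\max}\sim i^{-b}$, whence $\sum_{i=0}^k\alpha_{i,\max}\sim k^{1-a}=k^{1/2-\delta}$, $\sum_{i=0}^\infty\alpha_{i,\max}^2<\infty$, $\sum_{i=0}^\infty\alpha_{i,\max}^2/\mathsf{B}_i<\infty$, and $\sum_{i=0}^k\alpha_{i,\max}\epsilon_{i,\max}\sim\sum_{i=1}^k i^{-1}\sim\ln k$; moreover $\sum_{i=0}^\infty\Delta_i^2/(\alpha_{i,\min}\epsilon_{i,\min})<\infty$ was established inside the proof of Corollary \ref{cor:step:reg} (this is \eqref{extra.condition}), and $\sum_{i=0}^\infty q_{i,\tau,\mu}(L)(M_t\Gamma_i/\epsilon_{i,\min})^2(1+(\alpha_{i,\min}\epsilon_{i,\min})^{-1})<\infty$ by Assumption \ref{approx.step.tik}(iv) together with the fact that $q_{i,\tau,\mu}(L)$ is bounded (cf. \eqref{eee5}).

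Next I would bound the two ``state-dependent'' sums appearing in the right-hand sides of Theorems \ref{thm:feas:rate:tik} and \ref{thm:gap:rate:tyk}. Taking total expectations in the recursion \eqref{eq14.tik} from the proof of Theorem \ref{convergence.tik} and using $a_k\in[0,1]$, $\sum_k b_k<\infty$, the sequence $\{\esp[\Vert x^k-t^{k-1}\Vert^2]\}$ is bounded; combined with Lemma \ref{tykhonov.sequence}(ii) this gives $\sup_k\esp[\Vert x^k-t^k\Vert^2]<\infty$, and since $t^k\in X$ also $\sup_k\esp[\dist(x^k,X)^2]<\infty$. Since $f_{k,\tau,\mu}\le0$ and $h_{k,\tau,\mu}(L)\le0$ for all large $k$ (by \eqref{qk.eq3} and \eqref{eee5} respectively), the sums $\sum_{i=0}^k f_{i,\tau,\mu}\esp[\Vert x^i-t^i\Vert^2]$ and $\sum_{i=0}^k h_{i,\tau,\mu}(L)\{\esp[\dist(x^i)^2]+\diam(X)^2\}$ are bounded above by a finite constant (only the finitely many indices carrying a positive coefficient contribute, each such term being finite because $\esp[\Vert x^i\Vert^2]<\infty$ for every fixed $i$).

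The feasibility rate then follows at once: on the right-hand side of Theorem \ref{thm:feas:rate:tik} the five sums are, respectively, bounded above; $\sum\alpha_{i,\max}^2<\infty$; $\sum\Delta_i^2/(\alpha_{i,\min}\epsilon_{i,\min})<\infty$; $\sum\alpha_{i,\max}^2/\mathsf{B}_i<\infty$; and the $\Gamma$-sum $<\infty$, so the whole right-hand side is $O(1)$, while $\sum_{i=0}^k\mathsf{B}_i\sim k$; hence $\esp[\dist(\widetilde x^k,X)^2]\lesssim 1/k$. For the solvability rate, assuming $X$ compact, on the right-hand side of Theorem \ref{thm:gap:rate:tyk} the term $\diam(X)^2$ is constant, the $h$-sum is bounded above, the three terms carrying $\sum\alpha_{i,\max}^2$, $\sum\alpha_{i,\max}^2/\mathsf{B}_i$ and $\sum\Delta_i^2/(\alpha_{i,\min}\epsilon_{i,\min})$ are finite, and the last term $2\diam(X)M_X\sum_{i=0}^k\alpha_{i,\max}\epsilon_{i,\max}\sim\ln k$ is the dominant one; hence the right-hand side is $O(\ln k)$, while $\sum_{i=0}^k\alpha_{i,\max}\sim k^{1/2-\delta}$, which gives $\esp[\mathsf{G}(\widehat x^k)]\lesssim\ln k/k^{1/2-\delta}=k^\delta\ln k/\sqrt{k}$.

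The step I expect to be the main obstacle is the uniform $L^2$-boundedness of $\{x^k-t^k\}$: this is precisely the quantity that the proof of Theorem \ref{convergence.tik} drives to zero, but in order to feed it into Theorems \ref{thm:feas:rate:tik}--\ref{thm:gap:rate:tyk} one must extract a genuinely uniform bound from \eqref{eq14.tik}, which holds only ``for sufficiently large $k$''; this forces a separate treatment of the finitely many initial indices, for which $\esp[\Vert x^k\Vert^2]<\infty$ must be verified by induction on the algorithm, using the integrability of $F(v,\cdot)$ and the finite second moment of $L(v)$ guaranteed by Assumption \ref{mono.lips.unbiased.tik}. The rest is routine bookkeeping with the order-of-magnitude estimates listed above, together with a check that all implicit constants depend only on the problem data and on $\tau,\mu$, not on $k$.
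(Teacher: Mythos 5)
Your proposal is correct and follows essentially the same route as the paper: verify Assumption \ref{approx.step.tik}, plug the explicit rates into Theorems \ref{thm:feas:rate:tik} and \ref{thm:gap:rate:tyk}, observe that the sums with coefficients $f_{i,\tau,\mu}$ and $h_{i,\tau,\mu}(L)$ are bounded above because those coefficients are eventually negative, and note that $\sum_i\alpha_{i,\max}\epsilon_{i,\max}\sim\ln k$ dominates while $\sum_i\alpha_{i,\max}\sim k^{1/2-\delta}$ and $\sum_i\mathsf{B}_i\sim k$. The only difference is that you supply an explicit justification (finiteness of $\esp[\Vert x^i-t^i\Vert^2]$ for the finitely many initial indices, via induction on the recursion) for a point the paper leaves implicit; this is a harmless and slightly more careful rendering of the same argument.
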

\begin{proof}
The stated stepsizes and regularization parameters of Corollary \ref{cor:step:reg} satisfy Assumption \ref{approx.step.tik}, 
so that a.s.-convergence follows from Theorem \ref{convergence.tik}. In the sequel we fix $\mu,\tau\in(0,1)$.

We first establish the feasibility rate. We have
\begin{eqnarray}
\sum_{i=0}^\infty f_{i,\tau,\mu}\esp[\Vert x^i-t^i\Vert^2]<\infty,\quad
\sum_{i=0}^\infty \alpha_{i,\max}^2<\infty, \quad\sum_{i=0}^\infty\frac{\alpha_{i,\max}^2}{\mathsf{B}_i}<\infty,\label{cor:rate:tyk:eq1}\\
\sum_{i=0}^\infty\frac{\Delta_i^2}{\alpha_{i,\min}\epsilon_{i,\min}}<\infty, 
\quad\sum_{i=0}^\infty q_{i,\tau,\mu}(L)
\left(\frac{M_t\Gamma_i}{\epsilon_{i,\min}}\right)^2\left(1+\frac{1}{\alpha_{i,\min}\epsilon_{i,\min}}\right)<\infty.
\label{cor:rate:tyk:eq2}
\end{eqnarray}
The first inequality in \eqref{cor:rate:tyk:eq1} follows from \eqref{qk.eq3}, which implies that $f_{k,\tau,\mu}$ is negative 
for all sufficiently large $k$. The remaining inequalities in \eqref{cor:rate:tyk:eq1}-\eqref{cor:rate:tyk:eq2} 
follow from Corollary \ref{cor:step:reg} and from the boundedness of $q_{k,\tau,\mu}(L)$ (see \eqref{eee5} 
in Theorem \ref{convergence.tik}). The claimed feasibility rate follows from  
\eqref{cor:rate:tyk:eq1}-\eqref{cor:rate:tyk:eq2},
Theorem \ref{thm:feas:rate:tik} and the fact that $\sum_{i=0}^k\mathsf{B}_i=\min_{j\in[m]}\beta_j(2-\max_{j\in[m]}\beta_j)k$.

We now establish the solvability rate. We have
\begin{eqnarray}
\sum_{i=0}^k\alpha_{i,\max}\epsilon_{i,\max}\sim\sum_{i=0}^k\frac{1}{i}\sim\ln k,
\sum_{i=0}^k\alpha_{i,\max}\sim\sum_{i=0}^k\frac{1}{i^{\frac{1}{2}+\delta}}\sim k^{\frac{1}{2}-\delta}.
\label{cor:rate:tyk:eq3}
\end{eqnarray}
Also, $h_{k,\tau,\mu}(L)$ is negative for sufficiently large $k$ (as shown by relation \eqref{eee5}) so $\sum_{i=0}^\infty h_{i,\tau,\mu}(L)\left\{\esp\left[\dist(x^i)^2\right]+\diam(X)^2\right\}<\infty$. This, \eqref{cor:rate:tyk:eq1}-\eqref{cor:rate:tyk:eq3} and Theorem \ref{thm:gap:rate:tyk} prove the claim on the solvability rate.
\end{proof}

\section*{Appendix}
Proof of Proposition \ref{weak.sharp.equivalence}:
\begin{proof}
Suppose that \eqref{weakly.sharp1} holds and take $x^*\in X^*$. If $\tang_{X}(x^*)\cap\polar_{X^*}(x^*)=\{0\}$, 
then \eqref{weak.sharp.aux} holds trivially. Otherwise, take $d\in\tang_{X}(x^*)\cap\polar_{X^*}(x^*)$ with $d\ne0$. 
Since $d\in\polar_{X^*}(x^*)$, the definition of $\polar_{X^*}(x^*)$ implies that $X^*$ is a subset of the halfspace 
$H_d^-:=\{y:\langle d,y-x^*\rangle\le0\}$.
In view of 
\eqref{tangent:cone:def} 
and $d\in\tang_{X}(x^*)$, 
there exist sequences $d^k\in\re^n$, $t_k>0$ 
such that $x^*+t_k d^k\in X$, $d^k\rightarrow d$ and $t^k\rightarrow0$. We claim that, taking a subsequence if needed, 
\begin{equation}\label{prop.weak.sharp:eq1}
x^*+t_k d^k\in X-H_d^-.
\end{equation}
for all $k$.
Indeed, otherwise we would have 
\begin{equation}\label{e104}
0\ge\langle d,x^*+t_kd^k-x^*\rangle=t_k\langle d,d^k\rangle
\end{equation}
for large enough $k$.  
Dividing \eqref{e104}  by $t_k$ and letting $k\rightarrow\infty$ we get $d=0$ which entails a contradiction. 
Hence, \eqref{prop.weak.sharp:eq1} holds. From \eqref{weakly.sharp1}, $x^*\in X^*$ and $x^*+t_kd^k\in X$ we get
\begin{equation}\label{e105}
\langle T(x^*),x^*+t_k d^k-x^*\rangle\ge\rho\dist(x^*+t_k d^k,X^*)
\ge\rho\dist(x^*+t_k d^k,H_d^-)=\rho t_k\frac{\langle d,d^k\rangle}{\Vert d\Vert},
\end{equation}
using 
\eqref{prop.weak.sharp:eq1} 
and the fact that $X^*\subset H_d^-$ in the second inequality. 
Dividing \eqref{e105} by $t_k$ and letting $k\rightarrow\infty$, we conclude that \eqref{weak.sharp.aux} holds for $d$.

Now suppose that \eqref{weak.sharp.aux} holds and that $T$ is constant on $X^*$. Take $x\in X$, $x^*\in X^*$ and 
let $\bar x:=\Pi_{X^*}(x)$. Since $x,\bar x\in X$ and $X$ is closed and convex, we have that $x-\bar x\in\tang_{X}(\bar x)$, 
using the first equality in \eqref{tangent:cone:charac}. Since $T$ is monotone and $X$ is closed and convex, 
$X^*$ is closed and convex (see e.g. Facchinei and Pang \cite{facchinei}, Theorem 2.3.5). 
From this fact, the fact that $\bar x=\Pi_{X^*}(x)$ and Lemma \ref{proj}(i), we obtain that $x-\bar x\in\polar_{X^*}(\bar x)$, 
using the definition of the polar cone. Thus, $x-\bar x\in\tang_{X}(\bar x)\cap\polar_{X^*}(\bar x)$. We conclude 
from  \eqref{weak.sharp.aux} that 
\begin{equation}\label{prop.weak.sharp:eq2}
\langle T(\bar x),x-\bar x\rangle\ge\rho\Vert x-\bar x\Vert=\rho\dist(x,X^*).
\end{equation}
Since $T$ is constant on $X^*$, we have 
\begin{equation}\label{e107}
\langle T(\bar x),x-\bar x\rangle=\langle T(x^*),x-\bar x\rangle=
\langle T(x^*),x-x^*\rangle+\langle T(x^*),x^*-\bar x\rangle\le
\langle T(x^*),x-x^*\rangle,
\end{equation}
using the fact that $\langle T(x^*),x^*-\bar x\rangle\le0$, which  holds because $x^*\in X^*$ and $\bar x\in X$. 
The desired claim \eqref{weakly.sharp1} follows from \eqref{e107} and \eqref{prop.weak.sharp:eq2}.
\end{proof}

\end{document}